\theoremstyle{plain}
\newtheorem{theorem}{Theorem}[section]
\newtheorem{lemma}[theorem]{Lemma}
\newtheorem{proposition}[theorem]{Proposition}
\newtheorem{corollary}[theorem]{Corollary}
\theoremstyle{definition}
\newtheorem{definition}[theorem]{Definition}
\newtheorem{remark}[theorem]{Remark}
\newtheorem{example}[theorem]{Example}
\newtheorem{assumption}[theorem]{Assumption}
\newcommand{\norm}[1]{{||#1||}}
\newcommand{\wtilde}[1]{{\widetilde{#1}}}
\def\Id{\mathop{\mathrm{Id}}\nolimits}
\def\Ran{\mathop{\mathrm{Ran}}\nolimits}
\def\Ker{\mathop{\mathrm{Ker}}\nolimits}
\def\codim{\mathop{\mathrm{codim}}\nolimits}
\def\Re{\mathop{\mathrm{Re}}\nolimits}
\def\Im{\mathop{\mathrm{Im}}\nolimits}
\def\sgn{\mathop{\mathrm{sgn}}\nolimits}
\def\dist{\mathop{\mathrm{dist}}\nolimits}
\def\Hardy{\mathop{\mathrm{H}}\nolimits}
\def\Num{\mathop{\mathrm{Num}}\nolimits}
\def\ess{\mathop{\mathrm{ess}}\nolimits}
\def\R{{\mathbb{R}}}
\def\C{{\mathbb{C}}}
\def\H{{\mathcal{H}}}
\def\A{{\mathcal{A}}}
\def\B{{\mathcal{B}}}
\def\<{{\langle}}
\def\>{{\rangle}}
\def\hookr{{\hookrightarrow}}
\def\ep{{\varepsilon}}
\title
\author{Haruya Mizutani%\footnote{Department of Mathematics, Graduate School of Science, Osaka University, Toyonaka, Osaka 560-0043, Japan. E-mail address: \texttt{haruya@math.sci.osaka-u.ac.jp}}
}
\date{\empty}
\begin{document}
\maketitle

\begin{abstract}
The purpose of this paper is to study spectral properties of non-self-adjoint Schr\"odinger operators $-\Delta-\frac{(n-2)^2}{4|x|^{2}}+V$ on $\R^n$ with complex-valued potentials $V\in L^{p,\infty}$, $p>n/2$. We prove Keller type inequalities which measure the radius of a disc containing the discrete spectrum, in terms of the $L^{p,\infty}$ norm of $V$. Similar inequalities also hold if the inverse-square potential is replaced by a large class of subcritical potentials with critical singularities at the origin. The  main new ingredient in the proof is the uniform Sobolev inequality of Kenig-Ruiz-Sogge type for Schr\"odinger operators with strongly singular potentials, which is of independent interest. 
%We prove Keller-Lieb-Thirring inequalities for individual discrete eigenvalues of Schr\"odinger operators $-\Delta-\frac{(n-2)^2}{4|x|^{2}}+V$ on $\R^n$, $n\ge3$, where $V$ is a complex valued potential in weak-$L^{n/2+\gamma}(\R^n)$ for $0<\gamma\le1/2$ in general cases or for $0<\gamma<n/(n-1)$ in the radially symmetric setting. Similar bounds also hold for $-\Delta+V_\delta+V$ with a large class of subcritical potentials $V_\delta$ strictly bounded from below by $-\frac{(n-2)^2}{4|x|^{2}}$. 
\end{abstract}

%\maketitle\footnotetext{ %2000 MSC numbers2010 \textit{Mathematics Subject Classification}.Primary 35P15; Secondary 35J10.}\footnotetext{\textit{Key words and phrases}. eigenvalue bounds; uniform Sobolev inequalities; inverse-square potential}

\section{Introduction and Main results}
\label{section_Introduction}

This paper is concerned with spectral properties of the non-self-adjoint Schr\"odinger operator $$-\Delta-\frac{(n-2)^2}{4|x|^{2}}+V$$ on $\R^n$, $n\ge3$, with a complex-valued potential $V:\R^n\to\C$. In particular we are interested in the Keller type inequality for eigenvalues $E\in \C\setminus[0,\infty)$ of the form
\begin{align}
\label{KLT}
|E|^\gamma \le C_{\gamma,n}\int |V(x)|^{n/2+\gamma} dx
\end{align}
for some $\gamma>0$ and $C_{\gamma,n}>0$ independent of $E$ and $V$. This estimate  gives a quantitative bound %, in terms of the $L^{n/2+\gamma}$-norm of $V$, 
of the radius of a disk in $\C$ which contains the discrete spectrum $\sigma_{\mathrm{d}}(-\Delta-\frac{(n-2)^2}{4|x|^{2}}+V)$. 

In the case of $-\Delta+V$ with real-valued potential $V$, the estimate \eqref{KLT}  for the lowest negative eigenvalue $E$ with $|V(x)|$ replaced by $V_-(x)$, the negative part of $V(x)$, was first found by Keller \cite{Kel} for $n=1$, $\gamma\ge1/2$ and, later,  generalized to a much stronger inequality  known as the Lieb-Thirring inequality: 
\begin{align}\label{LT}\sum_{E\in \sigma_{\mathrm{d}}(-\Delta+V)}|E|^\gamma\le L_{\gamma,n}\int V_-(x)^{n/2+\gamma}dx,\end{align} where $\gamma\ge0$ if $n\ge3$, $\gamma>0$ if $n=2$ and $\gamma\ge1/2$ if $n=1$ (see \cite{LiTh,LiSe}). %Hence we refer to \eqref{KLT} as the {\it Keller-Lieb-Thirring bound} (or the KLT bound for short). 

In the last decade, numerous improvements and generalizations of these two inequalities \eqref{KLT} and \eqref{LT} have been made. On one hand, \eqref{KLT} has been extended to the case $-\Delta+V$ with complex valued potentials by \cite{AAD,Fra2,FrSi,Fra3}. More precisely, corresponding results proved in these papers are summarized as follows: 
\begin{itemize}
\item[(i)] \eqref{KLT} holds for $\gamma=1/2$ if $n=1$ (\cite{AAD}) and for $0<\gamma\le 1/2$ if $n\ge2$ (\cite{Fra2}). Moreover, if $n\ge2$ and $0<\gamma<1/2$ then $\norm{V}_{L^{n/2+\gamma}}$ can be replaced by the Morrey-Campanato norm $\norm{V}_{\mathcal L^{\alpha,n/\alpha}}$ with $\alpha=2n/(n+2\gamma)$ (see \cite[Theorem 3]{Fra2}).
\item[(ii)] \eqref{KLT} with $\norm{V}_{L^{n/2+\gamma}}$ is replaced by $\norm{V}_{L^{n/2+\gamma}(\R_+,r^{n-1}dr;L^\infty(\mathbb S^{n-1}))}$ holds for all $0<\gamma<n/2$ and $n\ge2$. In particular, \eqref{KLT} holds for $0<\gamma<n/2$ if $V$ is radially symmetric (\cite{FrSi}). 
\item[(iii)] If $n\ge1$ and $\gamma\ge1/2$ then the following estimate holds (\cite{Fra3}):
$$
|E|^{1/2} (\dist(E,[0,\infty)))^{\gamma-1/2}\le C_{\gamma,n}\int |V(x)|^{n/2+\gamma}dx.
$$
\end{itemize}
In general, Laptev-Safronov \cite{LaSa} conjectured that \eqref{KLT} holds for $0<\gamma\le n/2$. Note that, in contrast to the self-adjoint case, there is no min-max principle for the non-self-adjoint case and the usual variational argument by \cite{Kel} does not work well. Instead, above results (i)--(iii) rely on the Birman-Schwinger principle, together with precise estimates for the Birman-Schwinger kernel $\sgn V|V|^{1/2}(-\Delta-z)^{-1}|V|^{1/2}$ in terms of $z$ and the $L^{n/2+\gamma}$-norm of $V$. In particular, uniform Sobolev inequalities for the free resolvent $(-\Delta-z)^{-1}$ due to Kenig-Ruiz-Sogge \cite{KRS} were used in \cite{Fra2,Fra3,FrSi}. Lieb-Thirring type inequalities for the moment of  eigenvalues in the non-self-adjoint case have also been extensively studied. In addition to the above papers, we refer to  \cite{FLLS,LaSa,DHK,FrSa} and references therein. %

On the other hand, it is known that the operator $-\Delta +V$ with a sufficiently small potential $V$ has no negative eigenvalue if $n\ge3$. This can be seen from, {\it e.g.}, Hardy's inequality 
\begin{align}
\label{Hardy}
C_{\Hardy}\int  |x|^{-2}|u|^2dx\le \int|\nabla u|^2dx,\quad u\in C_0^\infty(\R^n),\quad C_{\Hardy}:=\frac{(n-2)^2}{4}. 
\end{align}
Hence it is natural to ask which part of the potential is responsible for negative eigenvalues. In this context, Ekholm-Frank \cite{EkFr} proved the so-called  Hardy-Lieb-Thirring inequality: 
\begin{align}
\label{HLT}
\sum_{E\in \sigma_{\mathrm{d}}(-\Delta-C_{\Hardy}|x|^{-2}+V)}(-E)^\gamma\le L'_{\gamma,n}\int V(x)_-^{\frac n2+\gamma}dx,\quad n\ge3,\ \gamma>0,
\end{align}
for the operator $-\Delta-C_{\Hardy}|x|^{-2}+V$ with real-valued potential $V$. This inequality suggests that only the part of the potential which is stronger than $-C_{\Hardy}|x|^{-2}$ is responsible for negative eigenvalues. There are also several works on spectral properties of $-\Delta-C_{\Hardy}|x|^{-2}+V$ with real-valued potentials (see \cite{BiLa,Wei,FLS,Fra1} and references therein). %For instance, a necessary and sufficient condition on $V$ for which this operator has at least one negative eigenvalue was obtained by \cite%[Theorem 8.1 and Remark 8.2]{Wei}. This is particularly the case if $V<0$. For more recent developments, we refer to \cite{BiLa,FLS,Fra1} and references therein. 
However, %spectral properties of such operators with 
the case with complex-valued potentials is less understood. In particular, there seems to be no previous literature on the above type inequalities \eqref{KLT} or \eqref{HLT}. Here note that results by \cite{AAD,Fra2,FrSi,Fra3} cannot be applied to $-\Delta-C_{\Hardy}|x|^{-2}+V$ since $|x|^{-2}\notin L^{p,\infty}(\R^n)$ for any $p\neq n/2$. 
%In view of the above observations, it is natural to ask whether similar bounds hold or not for the operator $-\Delta-C_{\Hardy}|x|^{-2}+V$ with complex-valued potentials $V$. %This is the main object in the present paper. More precisely, t

In this paper we focus on bounds for individual eigenvalues and extend (i)--(iii) to the operator $-\Delta-C_{\Hardy}|x|^{-2}+V$. More precisely, we show Keller type eigenvalue bounds \eqref{KLT} with $\norm{V}_{L^{n/2+\gamma}}$ replaced by the weak-Lebesgue norm $\norm{V}_{L^{n/2+\gamma,\infty}}$, which enables us to deal with more singular potentials than in the previous literatures. Furthermore,  for a wide class of subcritical potentials $V_\delta$ (see Assumption \ref{assumption_A} below), % which is strictly bounded from below by $-C_{\Hardy}|x|^{-2}$, 
we also show similar bounds for operators $-\Delta+V_\delta+V$. These results particularly improve a part of previous results (i)--(iii) even in the free case $V_\delta\equiv 0$. 

The proof of these results basically follows the strategy in \cite{Fra2,Fra3}. 
The main new ingredients in the present case are uniform Sobolev inequalities of Kenig-Ruiz-Sogge type for the resolvents $(-\Delta-C_{\Hardy}|x|^{-2}-z)^{-1}$ and $(-\Delta+V_\delta-z)^{-1}$. For the critical case, it takes the form
$$
\norm{(-\Delta-C_{\Hardy}|x|^{-2}-z)^{-1}}_{L^{p,2}\to L^{p',2}}\le C_p|z|^{-\frac{n+2}{2}+\frac np},\quad z\in \C\setminus[0,\infty),
$$
where $2n/(n+2)<p\le 2(n+1)/(n+3)$, $p'=p/(p-1)$ and $L^{p,q}$ is the Lorentz space (see Section \ref{section_Sobolev} for more details). 
Uniform Sobolev inequalities also play an important role in the study of unique continuation properties  (see \cite{KRS}) or the limiting absorption principle (see \cite{GoSc,IoSc}) for Schr\"odinger operators with rough potentials. Therefore, extending them to more general Schr\"odinger operators with critical singularities may be of independent interest. 

\subsection{Main results}
Let us state our main results more precisely. Let $\Delta$ be the Laplacian and $$H_0=-\Delta-C_{\Hardy}|x|^{-2}$$ a Schr\"odinger operator with the inverse-square	 potential in $\R^n$, $n\ge3$. To be more precise, Hardy's inequality \eqref{Hardy} implies that the quadratic form
$$
\tilde q_0(u)=\int \Big(|\nabla u|^2-C_{\Hardy}|x|^{-2}|u|^2\Big)dx
$$ 
is non-negative and closable on $C_0^\infty(\R^n)$. Then $H_0$ is defined as a unique self-adjoint operator corresponding to the closure $q_0$ of $\tilde q_0$. Since $q_0$ is non-negative and rotationally invariant, so is $H_0$. Moreover, one has $\sigma(H_0)=\sigma_{\mathrm{\ess}}(H_0)=[0,\infty)$. Suppose that $V$ is (possibly) complex-valued and that %$V$ is  $H_0$-form compact, \emph{i.e.}, 
$D(H_0^{1/2})\subset D(|V|^{1/2})$ and $|V|^{1/2}(H_0+1)^{-1/2}$ is compact. Then the quadratic form 
$$
q_0(u)+\int V|u|^2dx
$$
is sectorial on $D(q_0)$ and generates an $m$-sectorial operator $H_0+V$. Moreover, we have %$\sigma_{\ess}(H_0+V)$ and discrete spectrum $\sigma_{\mathrm{d}}(H_0+V)$ satisfy % coincides with $\sigma_{\ess}(H_0)$ and $\sigma(H_0+V)$ is a disjoint union of $\sigma_{\ess}(H_0+V)$ and the discrete spectrum $\sigma_{\mathrm{d}}(H_0+V)$, namely 
$$
%\label{spectrum}
\sigma_{\ess}(H_0+V)=[0,\infty),\quad 
\sigma(H_0+V)=\sigma_{\mathrm{d}}(H_0+V)\cup[0,\infty),\quad
\sigma_{\mathrm{d}}(H_0+V)\cap[0,\infty)=\emptyset,
$$
where $\sigma_{\mathrm{ess}}(A)$ and $\sigma_{\mathrm{d}}(A)$ denote the essential and discrete spectrum of $A$, respectively.  We refer to Appendix \ref{appendix_form_compact} for more details on the definition and basic spectral properties of $H_0+V$. 

 In order to state main results, we further introduce several notation.  The size of the potential $V$ will be measured by the weak-$L^p$ norm $\norm{V}_{L^{p,\infty}(\R^n)}$ %so that $$\norm{V}_{L^{p,\infty}(\R^n)}\sim \sup_{t>0}\Big(t\mu(\{x\in \R^n\ |\ |V(x)|>t\})^{1/p}\Big),$$
and also by a mixed norm $$\norm{V}_{L^{p,\infty}_rL^\infty_\omega(\R^n)}:=\big|\big|\norm{V}_{L^\infty(\mathbb S^{n-1})}\big|\big|_{L^{p,\infty}(\R_+,r^{n-1}dr)}.
$$
Note that $L^p\subset L^{p,\infty}$ and $L^{p,\infty}_rL^\infty_\omega\subset L^{p,\infty}$ and that if $V$ is radially symmetric, $\norm{V}_{L^{p,\infty}_rL^\infty_\omega(\R^n)}$ is equivalent to $\norm{V}_{L^{p,\infty}(\R^n)}$. We refer to Appendix \ref{appendix_interpolation} for more details on Lorentz spaces. Let $d(z)=\dist(z,[0,\infty))$ be the distance between $z$ and $[0,\infty)$ %Define $K_\gamma,K_{\gamma,\ep}^{\mathrm{rad}}:\C\setminus[0,\infty)\to \R_+$ by\begin{equation}\begin{aligned}\label{K}K_\gamma(z)&:=|z|^{\min(\gamma,1/2)}d(z)^{(\gamma-1/2)_+},\\K_{\gamma,\ep}^{\mathrm{rad}}(z)&:=|z|^{\min(\gamma,n/(n-1)-\ep)}d(z)^{(\gamma-n/(n-1)+\ep)_+}. \end{aligned}\end{equation}
 and set $a_+:=\max\{a,0\}$. 
%Note that $K_\gamma(z)=|z|^\gamma$ if $\gamma\le1/2$ and $K_{\gamma,\ep}^{\mathrm{rad}}(z)=|z|^{\gamma}$ if $\gamma\le n/(n-1)-\ep$. 

Our first result in this paper is as follows:

%theorem
\begin{theorem}
\label{theorem_1}
Let $n\ge3$ and $\gamma>0$. Suppose that $V\in L^{n/2+\gamma,\infty}(\R^n)$ and $|V|^{1/2}(H_0+1)^{-1/2}$ is compact .  Then any $E\in \sigma_{\mathrm{d}}(H_0+V)$ satisfies 
\begin{align}
\label{theorem_1_1}
|E|^{\min(\gamma,1/2)}d(E)^{(\gamma-1/2)_+}\le C_{\gamma,n}\norm{V}_{L^{n/2+\gamma,\infty}(\R^n)}^{n/2+\gamma}.
%|E|^{\min(\gamma,1/2)}d(E)^{(\gamma-1/2)_+}\le C_{\gamma,n}\norm{V}_{L^{n/2+\gamma,\infty}(\R^n)}^{n/2+\gamma}.
\end{align}
Furthermore, if $V\in L^{n/2+\gamma,\infty}_rL^\infty_\omega(\R^n)$, $\ep=0$ when $n\ge4$ and $\ep>0$ when $n=3$, then 
\begin{align}
\label{theorem_1_2}
|E|^{\min\left(\gamma,\frac{n}{n-1}-\ep\right)}d(E)^{\left(\gamma-\frac{n}{n-1}+\ep\right)_+}\le C_{\gamma,n,\ep}\norm{V}_{L^{n/2+\gamma,\infty}_rL^\infty_\omega(\R^n)}^{n/2+\gamma}.
%|E|^{\min(\gamma,n/(n-1)-\ep)}d(E)^{(\gamma-n/(n-1)+\ep)_+}\le C_{\gamma,n,\ep}\norm{V}_{L^{n/2+\gamma,\infty}_rL^\infty_\omega(\R^n)}^{n/2+\gamma}.
\end{align}
Here $C_{\gamma,n},C_{\gamma,n,\ep}>0$ can be taken uniformly with respect to  $E$ and $V$. 
%In particular, if $V$ is radially symmetric then \eqref{theorem_1_2} and \eqref{theorem_1_3} hold with $\norm{V}_{L^{n/2+\gamma,\infty}_rL^\infty_\omega(\R^n)}$ replaced by $\norm{V}_{L^{n/2+\gamma,\infty}(\R^n)}$. 
\end{theorem}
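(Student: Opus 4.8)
The plan is to follow the Birman–Schwinger strategy from \cite{Fra2,Fra3}, using the uniform Sobolev inequality for $H_0$ as the central analytic input. Fix $E\in\sigma_{\mathrm d}(H_0+V)$, so $E\in\C\setminus[0,\infty)$ and the Birman–Schwinger principle gives that $-1$ is an eigenvalue of the operator $K(E)=\sgn(V)|V|^{1/2}(H_0-E)^{-1}|V|^{1/2}$; hence $\norm{K(E)}_{L^2\to L^2}\ge 1$. The task is then to bound $\norm{K(E)}$ from above in terms of $|E|$, $d(E)$, and $\norm{V}_{L^{n/2+\gamma,\infty}}$, and to conclude. For \eqref{theorem_1_1} the argument splits according to the size of $\gamma$. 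When $0<\gamma\le 1/2$ one sets $p$ by $n/2+\gamma=p/(p-2)$, i.e.\ $p'=p/(p-1)$ matches the Kenig–Ruiz–Sogge range $2n/(n+2)<p\le 2(n+1)/(n+3)$, factor through Lorentz spaces: by Hölder in Lorentz spaces $\norm{|V|^{1/2}u}_{L^{p,2}}\lesssim \norm{V}_{L^{n/2+\gamma,\infty}}^{1/2}\norm{u}_{L^{p',2}}$ (this is where the $L^{p,2}\to L^{p',2}$ version of the uniform Sobolev estimate, rather than $L^p\to L^{p'}$, is needed to close the Lorentz-exponent bookkeeping), so that
\begin{align}
\label{plan_BS_small_gamma}
1\le \norm{K(E)}_{L^2\to L^2}\le C\,\norm{V}_{L^{n/2+\gamma,\infty}}\,\norm{(H_0-E)^{-1}}_{L^{p,2}\to L^{p',2}}\le C'\,\norm{V}_{L^{n/2+\gamma,\infty}}\,|E|^{-\gamma},
\end{align}
using the uniform Sobolev inequality stated in Section~\ref{section_Sobolev} together with the scaling exponent $-\tfrac{n+2}{2}+\tfrac np=-\gamma$. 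Rearranging gives \eqref{theorem_1_1} in this range, with no $d(E)$ factor since $(\gamma-1/2)_+=0$.

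For $\gamma>1/2$ the exponent $n/2+\gamma$ lies outside the range where a single uniform Sobolev estimate suffices, and one interpolates. The idea is to write $(H_0-E)^{-1}$ as a map $L^q\to L^{q'}$ for $q$ at the endpoint $p=2(n+1)/(n+3)$ of the Kenig–Ruiz–Sogge range (giving the $|E|^{-1/2}$ decay but with the sharp $d(E)$-dependence of \cite[(iii)]{Fra3}, namely a bound $\lesssim |E|^{-1/2}d(E)^{-(\gamma-1/2)}$ after the appropriate real interpolation against the trivial $L^2\to L^2$ bound $d(E)^{-1}$), and combine this with Hölder as above; the net effect is to replace $|E|^{-\gamma}$ in \eqref{plan_BS_small_gamma} by $|E|^{-1/2}d(E)^{-(\gamma-1/2)}$, which yields the stated inequality $|E|^{1/2}d(E)^{\gamma-1/2}\le C\norm{V}^{n/2+\gamma}$. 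The second assertion \eqref{theorem_1_2} is proved identically, except that one uses the stronger uniform Sobolev estimate available on the mixed norm $L^{p,\infty}_rL^\infty_\omega$ — which extends the admissible range of $p$ (up to $p<2n/(n+1)$, i.e.\ the decay exponent up to $n/(n-1)$, with the $\ep$-loss only in dimension $n=3$ where the endpoint is delicate) — in place of the plain Lorentz uniform Sobolev bound; the Birman–Schwinger bookkeeping is unchanged.

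I expect the main obstacle to be establishing, with uniform constants, the two ingredients feeding into \eqref{plan_BS_small_gamma}: first, the uniform Sobolev inequality for $H_0=-\Delta-C_{\Hardy}|x|^{-2}$ itself (this is the genuinely new analytic content — the inverse-square potential is critical, so one cannot perturb off the free resolvent, and must instead exploit the explicit diagonalization of $H_0$ in spherical harmonics, reducing to Bessel-type operators on the half-line and proving weighted/Lorentz bounds uniformly in the angular parameter, with the $n=3$ lowest mode responsible for the $\ep$-loss); and second, the correct $d(E)$-dependent interpolation for $\gamma>1/2$, where one must be careful that the interpolation between the KRS estimate and the $L^2$ bound is carried out so as not to lose the sharp power of $d(E)$ — this is precisely the point where the argument of \cite{Fra3} enters and must be checked to survive the replacement of $-\Delta$ by $H_0$. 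Once these are in hand, the deduction of \eqref{theorem_1_1}–\eqref{theorem_1_2} from the Birman–Schwinger principle is routine Hölder-in-Lorentz-spaces bookkeeping.
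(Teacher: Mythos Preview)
Your proposal is correct and follows essentially the same route as the paper: Birman--Schwinger principle to get $\norm{K(E)}_{L^2\to L^2}\ge1$, then H\"older in Lorentz spaces combined with the uniform Sobolev inequality $\norm{(H_0-z)^{-1}}_{L^{p,2}\to L^{p',2}}\lesssim|z|^{-(n+2)/2+n/p}$ for $0<\gamma\le1/2$, and real interpolation against the trivial bound $\norm{(H_0-z)^{-1}}_{L^2\to L^2}=d(z)^{-1}$ for $\gamma>1/2$; the mixed-norm case is identical with the extended exponent range.

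One remark on your expectation for ``the main obstacle'': the paper does \emph{not} prove the uniform Sobolev inequality for $H_0$ via a full spherical-harmonics/Bessel analysis. Instead it splits $(H_0-z)^{-1}=P(H_0-z)^{-1}P+P^\perp(H_0-z)^{-1}P^\perp$ into radial and non-radial parts. The non-radial part is handled by an abstract perturbation lemma, feeding in the free Kenig--Ruiz--Sogge estimate together with a uniform weighted bound $\sup_z\norm{|x|^{-1}P^\perp(H_0-z)^{-1}P^\perp|x|^{-1}}_{L^2\to L^2}<\infty$ (this is where the improved Hardy inequality on $(\ker P)^\perp$ enters). The radial part is treated by the exact unitary equivalence $H_0P\simeq-\Delta_{\R^2}P$, reducing to the two-dimensional free uniform Sobolev inequality. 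The $\ep$-loss for $n=3$ is not a ``bad lowest mode'' phenomenon but a boundary-of-exponent-region issue: when $n=3$ the endpoint $G$ of the admissible segment coincides with an excluded corner of the free admissible region $\Omega_0$.
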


This theorem implies following spectral consequences: 
\begin{itemize}
\item If $0<\gamma\le1/2$, or $0<\gamma<n/(n-1)$ and $V$ is radial, then \eqref{theorem_1_1} and \eqref{theorem_1_2} imply
$$
|E|^\gamma\le C_{\gamma,n}\norm{V}_{L^{n/2+\gamma,\infty}(\R^n)}^{n/2+\gamma}
$$
which provides a quantitative bound of the radius of a disk which contains $\sigma_{\mathrm{d}}(H_0+V)$. In particular, $\sigma_{\mathrm{d}}(H_0+V)$ is a bounded set in $\C$ in this case. 
\item On the other hand, the bounds \eqref{theorem_1_1} and \eqref{theorem_1_2} imply that for $\Re E>0$, 
\begin{align*}
|\Im E|&\le C |E|^{-\frac{1/2}{\gamma-1/2}}\norm{V}_{L^{n/2+\gamma,\infty}}^{\frac{n/2+\gamma}{\gamma-1/2}}\quad\text{if $\gamma>1/2$},\\
|\Im E|&\le C_\ep |E|^{-\frac{n/(n-1)-\ep}{\gamma-n/(n-1)+\ep}}\norm{V}_{L^{n/2+\gamma,\infty}}^{\frac{n/2+\gamma}{\gamma-n/(n-1)+\ep}}\quad\text{if $\gamma\ge \frac{n}{n-1}$ and $V$ is radial}. 
\end{align*}
This shows that, for any sequence $\{E_j\}\subset \sigma_{\mathrm{d}}(H_0+V)$,  if $\Re E_j\to+\infty$ then $|\Im E_j|\to 0$. Note that $\Re E$ is bounded from below for any $\gamma>0$. \end{itemize}
Note that both properties are shaper than a general fact for $m$-sectorial operators that $\sigma_{\mathrm{d}}(H_0+V)\subset\{z\in \C\ |\ |\arg(z-c)|\le\theta\}$ for some $c\in \R$, $\theta\in[0,\pi/2)$ (see Appendix \ref{appendix_form_compact}). 

The compactness of $|V|^{1/2}(H_0+1)^{-1/2}$ is not a strong restriction, as seen below: 
\begin{example}
\label{example_0}
Define a subspace $L^{p,\infty}_0\subset L^{p,\infty}$ by
$$
L^{p,\infty}_0(\R^n):=\{f\in L^{p,\infty}(\R^n)\ |\ \lim\limits_{R\to\infty}\norm{\mathds 1_{\{|x|\ge R\}}f}_{L^{p,\infty}(\R^n)}=0\},\quad 1\le p\le \infty.
$$
Note that $L^{p,q}\subset L^{p,\infty}_0$ for all $1\le q<\infty$, which can be seen from the facts that $L^{p,q}\subset L^{p,\infty}$ and simple functions are dense in $L^{p,q}$ for any $1\le p,q<\infty$. In particular, $L^p\subset L^{p,\infty}_0$. 
Also note that, if $|V(x)|\le C|x|^{-n/p}$ then $V\in L^{p,\infty}\cap (L^{p,\infty}_0+L^\infty_0)$ but $|x|^{-n/p}\notin L^q$ for any $q\ge1$. We will show in Appendix \ref{appendix_remark} that if $V\in L^{p,\infty}_0(\R^n)+L^\infty_0(\R^n)$ with some $p>\frac  n2$ then both $|V|^{1/2}(1-\Delta)^{-1/2}$ and $|V|^{1/2}(H_0+1)^{-1/2}$ are compact . 
\end{example}

%Before stating the second result, several remarks are in order: 
%remark
\begin{remark}
\label{remark_0}$ $
\begin{itemize}
\item[(1)] If $V$ is real-valued, $K_{\gamma}(E)=|E|^\gamma$ for any $\gamma$ and \eqref{theorem_1_1} thus implies
\begin{align}
\label{remark_1_1}
|E|^\gamma \le C_{\gamma,n}\norm{V}_{L^{n/2+\gamma,\infty}(\R^n)}^{n/2+\gamma}
\end{align}
for all $\gamma>0$, where one can assume $V\le0$ in this  case by the variational principle. This particularly implies the original version of Keller's inequality for the operator $H_0+V$ in the self-adjoint case. 
\item[(2)] The bound \eqref{theorem_1_1} extends previous results by \cite{Fra2,FrSi,Fra3} to  the operator $H_0+V$. This is not obvious in view of the fact that there is no variational principle in the non-self-adjoint case. Furthermore, if $n=3$ and $V$ is radially symmetric then \eqref{theorem_1_2} implies 
$$
|E|^{\gamma}\le C_{\gamma,n}\norm{V}_{L^{3/2+\gamma,\infty}(\R^3)}^{3/2+\gamma},\quad 0<\gamma<3/2,
$$
which %extends a part of previous results by \cite[Theorem 4.1]{FrSi} and 
proves Laptev-Safronov's conjecture (up to the endpoint $\gamma=3/2$) for $H_0+V$ in the radially symmetric setting. % (Note that, for all $n\ge2$, \cite{FrSi} also proved similar bounds, including weak-type estimates for the endpoint $\gamma=n/2$). \\
%On the other hand, the range $0<\gamma\le n/(n-1)$ for $n\ge4$ is smaller than the conjectured one. This restriction is due to the restriction of admissible pairs $(p,q)$ for uniform Sobolev inequalities for $H_0$, see Section \ref{section_Sobolev}. 
\item[(3)] If $n\ge3$ and $\norm{V}_{L^{n/2}}$ is small enough, $-\Delta+V$ has no eigenvalue. This is a consequence of the uniform estimate for $|V|^{1/2}(-\Delta-z)^{-1}V^{1/2}$ with respect to  $z$ (see \cite{Fra2,FrSi}). We also refer to \cite{FKV} in which, among the others, the absence of eigenvalues is studied for more general class of small complex-valued potentials with critical singularity. 
On the other hand, the same property  for $H_0+V$ does not hold. Indeed, when $V<0$, $H_0+V$ has at least one eigenvalue (see \cite{Wei}). Furthermore, it turns out that the resolvent $(H_0-z)^{-1}$ has a logarithmic singularity at $z=0$ (see Remark \ref{remark_2}). 
\item[(4)] Let $\wtilde V=V-C_{\Hardy}|x|^{-2}$. %\eqref{theorem_1_1} and \eqref{theorem_1_2} can be rephrased as\begin{align*}K_{\gamma}(E)&\le C_{\gamma,n}\norm{\wtilde V+C_{\Hardy}|x|^{-2}}_{L^{\frac n2+\gamma,\infty}(\R^n)}^{\frac n2+\gamma},\\K_{\gamma,\ep}^{\mathrm{rad}}(E)&\le C_{\gamma,n,\ep}\norm{\wtilde V+C_{\Hardy}|x|^{-2}}_{L^{n/2+\gamma,\infty}_rL^\infty_\omega(\R^n)}^{n/2+\gamma}.\end{align*}
Theorem \ref{theorem_1} suggests that only the part of $\wtilde V$ which is stronger than $-C_{\Hardy}|x|^{-2}$ is responsible for non-positive eigenvalues. 
\end{itemize}
\end{remark}

The second result concerns with a subcritical case in the sense that the unperturbed Hamiltonian is bounded from below  by $-\delta\Delta$ with some $\delta>0$. More precisely, we consider a family of potentials $\{V_\delta\}_{\delta>0}$ satisfying the following assumption. 

%assumption
\begin{assumption}
\label{assumption_A}
(1) $V_\delta(x)$ is real-valued, $|x|V_\delta\in L^{n,\infty}(\R^n)$ and $x\cdot\nabla V_\delta\in L^{n/2,\infty}(\R^n)$. \\
(2) $-\Delta+V_\delta\ge-\delta\Delta$ and $-\Delta-V_\delta-x\cdot\nabla V_\delta\ge-\delta\Delta$ on $C_0^\infty(\R^n)$, \emph{i.e.},
\begin{align}
\label{assumption_A_1}
\int \Big(|\nabla u|^2+V_\delta|u|^2\Big)dx&\ge \delta\int |\nabla u|^2dx,\\
\label{assumption_A_2}
\int \Big(|\nabla u|^2-(V_\delta+x\cdot\nabla V_\delta)|u|^2\Big)dx&\ge \delta\int |\nabla u|^2dx,
\end{align}
hold for all $u\in C_0^\infty(\R^n)$. 
\end{assumption}

The hypothesis $|x|V_\delta\in L^{n,\infty}$ and H\"older's inequality yield $V_\delta\in L^{n/2,\infty}$. Furthermore, it follows from H\"older's and Sobolev's inequalities (see Appendix \ref{appendix_interpolation}) that
\begin{align}
\label{assumption_A_3}
\int |V_\delta||u|^2dx\le C\norm{V_\delta}_{L^{n/2,\infty}}\norm{u}_{L^{{2n}/{(n-2)},2}}^2\le C\norm{V_\delta}_{L^{n/2,\infty}}\norm{\nabla u}_{L^2}^2. 
\end{align}
Thus Assumption \ref{assumption_A} implies that the quadratic form 
$$
\tilde q_\delta(u)=\int \Big(|\nabla u|^2+V_\delta|u|^2\Big)dx,\quad u\in C_0^\infty(\R^n),
$$
is non-negative and closable. Let $H_\delta=-\Delta+V_\delta$ be a unique self-adjoint operator corresponding to the closure of $\tilde q_\delta$. By \eqref{assumption_A_1} and Proposition \ref{proposition_Sobolev_1} (1) in Section \ref{section_Sobolev}, we obtain $\sigma(H_\delta)=\sigma_{\mathrm{ess}}(H_\delta)=[0,\infty)$. Furthermore, it follows from \eqref{assumption_A_1} and \eqref{assumption_A_3} that, given an operator $A$, $A(H_0+1)^{-1/2}$ is compact if and only if $A(1-\Delta)^{-1/2}$ is compact. Given a potential $V:\R^n\to \C$ such that $|V|^{1/2}(H_0+1)^{1/2}$ is compact, one thus can define an $m$-sectorial operator $H_\delta+V$ as in the critical case. Furthermore, $H_\delta+V$ satisfies $\sigma_{\ess}(H_\delta+V)=[0,\infty)$, $\sigma(H_\delta+V)=\sigma_{\mathrm{d}}(H_\delta+V)\cup[0,\infty)$ and $\sigma_{\mathrm{d}}(H_\delta+V)\cap[0,\infty)=\emptyset$ as in the critical case (see Appendix \ref{appendix_form_compact}). 

Then we obtain the same inequalities as in the critical case with constants depending on $\delta$: 

%theorem
\begin{theorem}
\label{theorem_2}
Let $n\ge3$, $\delta>0$, $\gamma>0$ and let $\ep=0$ if $n\ge4$ or $\ep>0$ if $n=3$. Suppose that $|V|^{1/2}(1-\Delta)^{-1/2}$ is compact. Then there exist $C_{\gamma,n},C_{\gamma,n,\ep}>0$ (independent of $V$ and $\delta$) such that any $E\in \sigma_{\mathrm{d}}(H_\delta+V)$ satisfies 
\begin{align}
\label{theorem_2_1}
|E|^{\min(\gamma,1/2)}d(E)^{(\gamma-1/2)_+}&\le C_{\gamma,n}\delta^{-\min(n+2\gamma,n+1)}\norm{V}_{L^{n/2+\gamma,\infty}(\R^n)}^{n/2+\gamma},\\
\label{theorem_2_2}
|E|^{\min\left(\gamma,\frac{n}{n-1}-\ep\right)}d(E)^{\left(\gamma-\frac{n}{n-1}+\ep\right)_+}&\le C_{\gamma,n,\ep}\delta^{-\min\left(n+2\gamma,\frac{n(n+1)}{n-1}+\ep\right)}\norm{V}_{L^{n/2+\gamma,\infty}_rL^\infty_\omega(\R^n)}^{n/2+\gamma}.
\end{align}
%\eqref{theorem_1_1} and \eqref{theorem_1_2} with $C_{\gamma,n}$ and $C_{\gamma,n,\ep}$ replaced by $C_{\gamma,n}\delta^{-\min(n+2\gamma,n+1)}$ and  $C_{\gamma,n,\ep}\delta^{-\min(n+2\gamma,\frac{n(n+1)}{n-1}+\ep)}$, respectively. 
\end{theorem}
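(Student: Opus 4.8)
The plan is to follow the Birman--Schwinger strategy of \cite{Fra2,Fra3} that underlies Theorem \ref{theorem_1}, the only additional point being to keep track of the power of $\delta$ produced by the uniform Sobolev inequality for $H_\delta$. Fix $E\in\sigma_{\mathrm{d}}(H_\delta+V)\subset\C\setminus[0,\infty)$. Since $|V|^{1/2}(1-\Delta)^{-1/2}$ is compact and, by \eqref{assumption_A_1} and \eqref{assumption_A_3} (equivalently, using Proposition \ref{proposition_Sobolev_1}), $(1-\Delta)^{1/2}(H_\delta+1)^{-1/2}$ and its inverse are bounded, the Birman--Schwinger operator
$$
K_E:=\sgn V\,|V|^{1/2}(H_\delta-E)^{-1}|V|^{1/2}
$$
is a well-defined compact operator on $L^2(\R^n)$; the Birman--Schwinger principle (Appendix \ref{appendix_form_compact}) then gives $-1\in\sigma(K_E)$, so $\norm{K_E}_{L^2\to L^2}\ge1$.

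Next I would bound $\norm{K_E}_{L^2\to L^2}$ by factoring through Lorentz spaces. Since $V\in L^{n/2+\gamma,\infty}$ means $|V|^{1/2}\in L^{n+2\gamma,\infty}$, H\"older's inequality in Lorentz spaces (Appendix \ref{appendix_interpolation}) shows that multiplication by $|V|^{1/2}$ is bounded from $L^{p',2}(\R^n)$ to $L^2(\R^n)$ and from $L^2(\R^n)$ to $L^{p,2}(\R^n)$, with norm $\norm{V}_{L^{n/2+\gamma,\infty}}^{1/2}$, where $p$ and its conjugate $p'$ are fixed by $\tfrac1p-\tfrac1{p'}=\tfrac2{n+2\gamma}$; in the mixed-norm setting one argues fibrewise in the angular variable and obtains the analogous bounds with $\norm{V}_{L^{n/2+\gamma,\infty}_rL^\infty_\omega}^{1/2}$ into, resp.\ out of, the mixed-norm Lorentz spaces $L^{p,2}_rL^2_\omega$ and $L^{p',2}_rL^2_\omega$. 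Hence
$$
1\le\norm{K_E}_{L^2\to L^2}\le\norm{V}_{L^{n/2+\gamma,\infty}}\,\norm{(H_\delta-E)^{-1}}_{L^{p,2}\to L^{p',2}},
$$
and likewise in the mixed-norm case. The proof is then reduced to the uniform Sobolev inequality for $H_\delta$ proved in Section \ref{section_Sobolev}, which under Assumption \ref{assumption_A} provides, for $p$ in the Kenig--Ruiz--Sogge range $\tfrac{2n}{n+2}<p\le\tfrac{2(n+1)}{n+3}$ (resp.\ in the wider range allowed by angular regularity, with an endpoint loss $\ep>0$ when $n=3$),
$$
\norm{(H_\delta-z)^{-1}}_{L^{p,2}\to L^{p',2}}\le C_p\,\delta^{-b}\,|z|^{-\frac{n+2}{2}+\frac np},\qquad z\in\C\setminus[0,\infty),
$$
with a fixed $b>0$ (one may take $b=2$). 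When $0<\gamma\le\tfrac12$ (resp.\ $0<\gamma<\tfrac n{n-1}-\ep$) the exponent $p$ with $\tfrac1p-\tfrac1{p'}=\tfrac2{n+2\gamma}$ lies in the admissible range, so inserting this bound at $z=E$ and raising to the power $\tfrac n2+\gamma$ gives \eqref{theorem_2_1} (resp.\ \eqref{theorem_2_2}) with $d(E)$-exponent $0$ and $\delta$-power $n+2\gamma$.

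For the remaining range of $\gamma$ the exponent $p$ drops below $\tfrac{2(n+1)}{n+3}$ (resp.\ the angular endpoint), where the uniform Sobolev bound fails, so I would instead run Frank's interpolation argument from \cite{Fra3}. Applying Stein's interpolation theorem to the analytic family $z\mapsto(H_\delta-z)^{-1}$ between the endpoint uniform Sobolev estimate at $p_0=\tfrac{2(n+1)}{n+3}$ (resp.\ at the angular endpoint) and the trivial self-adjoint bound $\norm{(H_\delta-z)^{-1}}_{L^2\to L^2}=d(z)^{-1}$, which carries \emph{no} factor of $\delta$, one obtains, for $p$ with $\tfrac1p-\tfrac1{p'}=\tfrac2{n+2\gamma}$ and the interpolation parameter $\theta=\theta(\gamma)\in[0,1]$ determined by this identity,
$$
\norm{(H_\delta-z)^{-1}}_{L^{p,2}\to L^{p',2}}\le C\,\delta^{-b(1-\theta)}\,|z|^{-\beta(1-\theta)}\,d(z)^{-\theta},
$$
where $\beta=\tfrac1{n+1}$ (resp.\ the corresponding endpoint exponent). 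Substituting this at $z=E$ into the Birman--Schwinger inequality and again raising to the power $\tfrac n2+\gamma$ produces \eqref{theorem_2_1} (resp.\ \eqref{theorem_2_2}) with the genuine $d(E)$-exponent $(\gamma-\tfrac12)_+$ (resp.\ $(\gamma-\tfrac n{n-1}+\ep)_+$); the $\delta$-power now saturates at $n+1$ (resp.\ $\tfrac{n(n+1)}{n-1}+\ep$) precisely because the $L^2\to L^2$ endpoint contributes nothing, which accounts for the minimum in the exponents.

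The H\"older estimates in Lorentz and mixed-norm spaces, and the interpolation bookkeeping, are routine. \textbf{The main obstacle} is the uniform Sobolev inequality for $H_\delta$ with explicit $\delta$-dependence, displayed in the second paragraph above, whose proof is the heart of Section \ref{section_Sobolev}: it is there that the subcriticality \eqref{assumption_A_1} and the repulsivity-type condition \eqref{assumption_A_2} on $V_\delta$ must be exploited --- e.g.\ through a Morawetz/virial resolvent estimate after rescaling by $\delta$, or by comparison with the critical inverse-square operator $H_0$ --- and it is there that dimension $n=3$ forces the endpoint restriction $\ep>0$.
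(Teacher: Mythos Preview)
Your argument is correct and mirrors the paper's exactly: Birman--Schwinger reduces \eqref{theorem_2_1}--\eqref{theorem_2_2} to a uniform Sobolev estimate for $(H_\delta-z)^{-1}$ (Theorem~\ref{theorem_Sobolev_6} and Corollary~\ref{corollary_Sobolev_8}), the small-$\gamma$ range is covered directly, and the large-$\gamma$ range follows by interpolating the endpoint uniform Sobolev bound against the trivial $L^2$ resolvent bound; your bookkeeping of the $\delta$-powers (via $b=2$, $(1-\theta)=(n+1)/(n+2\gamma)$, and raising to the power $n/2+\gamma$) is exactly how the stated exponents arise.

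One small correction: the interpolation step is not Stein's analytic interpolation in the variable $z$. For each \emph{fixed} $z\in\C\setminus[0,\infty)$ you have a single operator $(H_\delta-z)^{-1}$ satisfying two bounds, one $L^{p_0,2}\to L^{p_0',2}$ and one $L^2\to L^2$; the paper interpolates in the \emph{function-space exponent} via real interpolation of Lorentz spaces (Theorems~\ref{theorem_interpolation_2} and~\ref{theorem_Lorentz_2}) to obtain the $L^{p_\gamma,2}\to L^{p_\gamma',2}$ bound with the product constant $\delta^{-2(1-\theta)}|z|^{-\beta(1-\theta)}d(z)^{-\theta}$. Your displayed conclusion is the right one; just replace ``Stein's interpolation theorem to the analytic family $z\mapsto(H_\delta-z)^{-1}$'' by ``real interpolation in $p$ at fixed $z$''.
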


%remark
\begin{remark}$ $
\begin{itemize}
\item[(1)] Similar spectral consequences as in the critical case also hold for $H_\delta+V$. 
\item[(2)] Theorem \ref{theorem_2} does not imply Theorem \ref{theorem_1} since the right hand sides of \eqref{theorem_2_1} and \eqref{theorem_2_2} blow up %due to the fact that $\delta^{-\min(n+2\gamma,n+1)}\to \infty$ and $\delta^{-\min\left(n+2\gamma,\frac{n(n+1)}{n-1}+\ep\right)}\to\infty$ 
 as $\delta\to0$. However, the admissible class of $V_\delta$ is much wider than in Theorem \ref{theorem_1}. At first $V_\delta\equiv 0$ obviously satisfies Assumption \ref{assumption_A}. The inverse-square potential $-\sigma |x|^{-2}$ satisfies Assumption \ref{assumption_A} if $\sigma<C_{\Hardy}$. Furthermore, Assumption \ref{assumption_A} is general enough to include several anisotropic potentials such that $|x|^2V_\delta\notin L^\infty$. For instance, let $c_1,c_2>0$, $\alpha\in\R^n$ and $\chi\in C^1(\R)$ so that $0\le \chi\le 1$ and $|\chi^{(k)}(t)|\le |t|^{-k-1}$ for $|t|\ge1$, and define 
$$
V_\delta(x)=(-C_{\Hardy}+c_1)|x|^{-2}-c_2\chi(|x-\alpha|)|x-\alpha|^{-1}. 
$$
Then Assumption \ref{assumption_A} holds with $\delta=c_1-c_2(2+\sup|\chi'|)(|\alpha|+1)$ if 
$$
c_2<\frac{c_1}{(2+\sup|\chi'|)(|\alpha|+1)}. 
$$
One can also consider multiple small Coulomb type singularities. 
\item[(3)] For $\gamma\ge1/2$, Theorem \ref{theorem_2} is new even in the free case $V_\delta\equiv0$ compared with previous results by \cite{Fra2,Fra3} where only potentials in Lebesgue spaces have been considered, while our result covers more singular potentials in Lorentz spaces. %For instance, if $C_1 |x|^{-\frac{n}{n/2+\gamma}}\le |V(x)|\le C_2 |x|^{-\frac{n}{n/2+\gamma}}$ then $V\in L^{n/2+\gamma,\infty}$ but $V\notin L^p$ for any $p\ge1$. \\
%\item[(4)] If $0<\gamma<1/2$ then \eqref{theorem_2_1} holds for a more general class of complex potentials $V$ belonging the Morrey-Campanato space $M^{n/2,\sigma}$ with $(n-1)/2<\sigma
\end{itemize}
\end{remark}

The rest of this paper devoted to the proof of the above theorems and is organized as follows. In the next section we collect several basic results used in later sections. In particular, we recall there uniform Sobolev inequalities for the free resolvent. Then we prove uniform Sobolev inequalities for $(H_0-z)^{-1}$ and $(H_\delta-z)^{-1}$ in Section \ref{section_Sobolev}. The proof of Theorems \ref{theorem_1} and \ref{theorem_2} is given in Section \ref{section_proof}.  In appendix \ref{appendix_form_compact}, we recall very briefly a basic concept of $m$-sectorial operators, which particularly gives the precise definition of $H_0$ and $H_\delta$. In Appendix \ref{appendix_interpolation}, a brief review on the real interpolation and Lorentz spaces is given. We prove the relative form compactness of $V$ in Appendix \ref{appendix_remark}. In Appendix \ref{appendix_resolvent}, the proof of weighted resolvent estimates in Section \ref{section_Sobolev} is given.
%Finally, we show that $V_\delta$ defined by \eqref{example_1} satisfies Assumption \ref{assumption_A} in Appendix \ref{appendix_example}.  

\section{Preliminaries}
\label{section_preliminaries}

In this section we record several basic results which will be used to prove uniform Sobolev inequalities for our operators $H_0$ and $H_\delta$ in Section \ref{section_Sobolev}. 

\subsection{Abstract resolvent estimates}
We first prepare an abstract method which enables us to deduce the proof of resolvent estimates between Banach spaces to that of weighted estimates in a Hilbert space. 
Let $(T_0,D(T_0))$, $(T,D(T))$ be self-adjoint operators on a complex separable Hilbert space $\H$ equipped with norm $\norm{\cdot}$ and inner product $\<\cdot,\cdot\>$. Suppose there exist densely defined closed operators $(Y,D(Y))$ and $(Z,D(Z))$ such that $T=T_0+Y^*Z$ in the sense that 
\begin{align}
\label{lemma_abstract_1_-1}
D(T_0) &\cup D(T)\subset D(Y) \cap D(Z),\\
\label{lemma_abstract_1_0}
\<T\varphi,\psi\>&=\<\varphi,T_0\psi\>+\<Z\varphi,Y\psi\>,\quad \varphi\in D(T),\ \psi\in D(T_0).
\end{align}
Note that $D(Z^*)$ is dense in $\H$ since $Z$ is closed (see \cite[Theorem VIII.1]{ReSi}). 

Recall that a pair of two Banach spaces $(\A_1,\A_2)$ is said to be a Banach couple if both $\A_1$ and $\A_2$ can be algebraically and topologically embedded in a Hausdorff topological vector space $\A$. Given a Banach couple $(\A_1,\A_2)$, $\A_1\cap \A_2$ is well-defined. 
%lemma
\begin{lemma}
\label{lemma_abstract_1}
Let $\A,\B$ be two Banach spaces such that $(\mathcal H,\mathcal A)$ and $(\mathcal H,\mathcal B)$ are Banach couples. % and that $\A\cap \H$ (resp. $\B\cap \H$) is  dense in $\A$ (resp. $\B$). 
Let $W$ be a bounded self-adjoint operator on $\H$ with $\norm{W}_{\H\to \H}\le1$ such that $W$ commutes with $T_0$ and $Y$ and that $WD(Z^*)\subset D(Z^*)$. Let $z\in \rho(T_0)\cap \rho(T)$. Suppose there exist constants $r_1,..,r_5$ (possibly depending on $z$) such that
\begin{align}
\label{lemma_abstract_1_1}
|\<W(T_0-z)^{-1}W\varphi,\psi\>|&\le r_1\norm{\varphi}_{\A}\norm{\psi}_{\B},\\
\label{lemma_abstract_1_4}
\norm{Z(T_0-z)^{-1}\varphi} &\le r_2\norm{\varphi}_{\A},\\
\label{lemma_abstract_1_2}
\norm{Y(T_0-z)^{-1}\varphi} &\le r_3\norm{\varphi}_{\A},\\
\label{lemma_abstract_1_3}
\norm{Y(T_0-\overline z)^{-1}\psi} &\le r_4\norm{\psi}_{\B},\\
\label{lemma_abstract_1_5}
\norm{WZ(T-\overline z)^{-1}Z^*Wh} &\le r_5\norm{h} ,
\end{align}
for all $\varphi\in \H\cap \A$, $\psi\in \H\cap \B$ and $h\in D(Z^*)$. Then, for all $\varphi\in \H\cap \A$ and $\psi\in \H\cap \B$, 
\begin{align}
\label{lemma_abstract_1_6}
|\<W(T-z)^{-1}W\varphi,\psi\>|\le (r_1+r_2r_4+r_3r_4r_5)\norm{\varphi}_{\A}\norm{\psi}_{\B}.
\end{align}
\end{lemma}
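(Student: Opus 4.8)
The plan is to use the second resolvent identity to expand $(T-z)^{-1}$ in terms of $(T_0-z)^{-1}$ and the factorization $Y^*Z$, then estimate each resulting term by combining the hypotheses \eqref{lemma_abstract_1_1}--\eqref{lemma_abstract_1_5}. Formally, from $T = T_0 + Y^*Z$ one expects
$$
(T-z)^{-1} = (T_0-z)^{-1} - (T_0-z)^{-1}Y^*Z(T-z)^{-1},
$$
and iterating once more on the remainder,
$$
(T-z)^{-1} = (T_0-z)^{-1} - (T_0-z)^{-1}Y^*Z(T_0-z)^{-1} + (T_0-z)^{-1}Y^*Z(T-z)^{-1}Y^*Z(T_0-z)^{-1}.
$$
Conjugating by $W$ and using that $W$ commutes with $T_0$ and $Y$ (hence with $Y^*$ on the appropriate domain, and with $(T_0-z)^{-1}$), the three terms should be controlled respectively by $r_1$; by $r_2 r_4$ via $|\<W(T_0-z)^{-1}Y^*Z(T_0-z)^{-1}W\varphi,\psi\>| = |\<Z(T_0-z)^{-1}W\varphi,\, Y(T_0-\bar z)^{-1}W\psi\>| \le \norm{Z(T_0-z)^{-1}\varphi}\,\norm{Y(T_0-\bar z)^{-1}\psi}$ (moving $W$ through and using $\norm{W}\le 1$); and by $r_3 r_4 r_5$ for the last term, writing it as $\<Z(T-\bar z)^{-1}Y^*Z(T_0-z)^{-1}W\varphi,\, Y(T_0-\bar z)^{-1}W\psi\>$ — but here one must insert the $W$'s carefully to land exactly on the operator $WZ(T-\bar z)^{-1}Z^*W$ appearing in \eqref{lemma_abstract_1_5}.

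The main obstacle is that the manipulations above are purely formal: the operators $Y$, $Z$, $Z^*$ are merely densely defined and closed, $Y^*$ need not equal a nice operator, and the resolvent identity $T = T_0 + Y^*Z$ holds only in the weak sense \eqref{lemma_abstract_1_0}, so one cannot freely write $(T_0-z)^{-1}Y^*Z(T-z)^{-1}$ as a bounded operator a priori. The rigorous route is to first establish, for $h \in D(Z^*)$, the identity
$$
\<W(T-z)^{-1}W\varphi,\psi\> = \<W(T_0-z)^{-1}W\varphi,\psi\> - \<WZ(T-\bar z)^{-1}Z^*W\,\cdot\,,\,\cdot\,\>\ \text{type terms},
$$
by testing \eqref{lemma_abstract_1_0} against $\varphi' = (T-z)^{-1}W\varphi \in D(T)$ (using \eqref{lemma_abstract_1_-1} to ensure $\varphi' \in D(Z)$) and $\psi' = (T_0-\bar z)^{-1}W\psi \in D(T_0) \subset D(Y)$, which yields
$$
\<(T-z)^{-1}W\varphi,\,W\psi\> - \<W\varphi,\,(T_0-\bar z)^{-1}W\psi\> = -\<Z(T-z)^{-1}W\varphi,\; Y(T_0-\bar z)^{-1}W\psi\>.
$$
This is the honest form of the first resolvent expansion. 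One then needs a second such identity, obtained by applying the same testing procedure with $T_0$ in place of $T$ to handle $\<Z(T-z)^{-1}W\varphi, \cdot\,\>$, reducing it to $Z(T_0-z)^{-1}$ plus a term involving $Z(T-\bar z)^{-1}Z^*$; the hypothesis $WD(Z^*)\subset D(Z^*)$ is exactly what lets $W$ pass inside so that \eqref{lemma_abstract_1_5} applies. Throughout, one uses self-adjointness of $W$ to convert $\<W\,\cdot\,,\,\cdot\,\>$ into $\<\,\cdot\,,W\,\cdot\,\>$ and $\norm{W}_{\H\to\H}\le 1$ to discard the extra $W$ factors when invoking \eqref{lemma_abstract_1_4}--\eqref{lemma_abstract_1_3}.

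Once the two exact identities are in place, the estimate \eqref{lemma_abstract_1_6} follows by: bounding the leading term by $r_1\norm{\varphi}_\A\norm{\psi}_\B$ from \eqref{lemma_abstract_1_1}; applying Cauchy--Schwarz in $\H$ to the cross term and using \eqref{lemma_abstract_1_4} and \eqref{lemma_abstract_1_3} to get $r_2 r_4\norm{\varphi}_\A\norm{\psi}_\B$; and applying Cauchy--Schwarz together with \eqref{lemma_abstract_1_5}, \eqref{lemma_abstract_1_2} and \eqref{lemma_abstract_1_3} to the remaining term to get $r_3 r_4 r_5\norm{\varphi}_\A\norm{\psi}_\B$. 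A final density remark — that $\H\cap\A$ and $\H\cap\B$ vectors, paired through $(T-z)^{-1}W$ and $(T_0-\bar z)^{-1}W$, land in the domains where the testing identity \eqref{lemma_abstract_1_0} was applied, and that $D(Z^*)$ is dense by \cite[Theorem VIII.1]{ReSi} so \eqref{lemma_abstract_1_5} extends where needed — completes the argument.
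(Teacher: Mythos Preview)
Your proposal is correct and follows essentially the same approach as the paper: both derive the weak-sense resolvent identities from \eqref{lemma_abstract_1_0} by testing against $(T-z)^{-1}W\varphi$ and $(T_0-\bar z)^{-1}W\psi$, then handle the remaining term $WZR_T(z)W\varphi$ by pairing with $h\in D(Z^*)$ (using $WD(Z^*)\subset D(Z^*)$) and applying the second identity \eqref{proof_lemma_abstract_1_2} to produce exactly the $WZ(T-\bar z)^{-1}Z^*W$ factor from \eqref{lemma_abstract_1_5}. The only cosmetic difference is that the paper estimates $\norm{WZR_T(z)W\varphi}$ directly by duality over $h\in D(Z^*)$ rather than writing out the full three-term decomposition first, but the content is identical.
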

Note that, by \eqref{lemma_abstract_1_-1}, both $(T_0-z)^{-1}$ and $(T-z)^{-1}$ map from $\H$ to $D(Y)\cap D(Z)$ if $z\in \rho(T_0)\cap \rho(T)$. In particular, the left hand sides of \eqref{lemma_abstract_1_1}--\eqref{lemma_abstract_1_6} are well defined. 
%Note that the condition $\norm{W}_{\H\to \H}\le1$ is not a restriction, while it is assumed only for simplicity. Indeed, for general $W\neq0$, it suffices to consider $\norm{W}_{\H\to \H}^{-1}W$. 

%proof
\begin{proof}
Although the proof is essentially same as that of \cite[Proposition 4.1]{BoMi} (in which the case $W\equiv1$ was considered), we give its details for the same of completeness. 

Let $R_{T_0}(z)=(T_0-z)^{-1}$ and $
R_{T}(z)=(T-z)^{-1}$.  At a formal level, \eqref{lemma_abstract_1_6} follows from \eqref{lemma_abstract_1_1}--\eqref{lemma_abstract_1_5} and an iterated resolvent identity:
$$
R_{T}(z)=R_{T_0}(z)-R_{T_0}(z)Y^*ZR_{T_0}(z)+R_{T_0}(z)Y^*ZR_T(z)Z^*YR_{T_0}(z).
$$
We however note that this formula is slightly formal since neither $Y$ nor $Z$ is bounded on $\H$. Instead, the proof is based on the following resolvent identities in a weak sense:
\begin{align}
\label{proof_lemma_abstract_1_1}
\<R_{T}(z)\varphi,\psi\>
&=\<R_{T_0}(z)\varphi,\psi\>-\<ZR_{T}(z)\varphi,YR_{T_0}(\overline z)\psi\>,\\
\label{proof_lemma_abstract_1_2}
&=\<R_{T_0}(z)\varphi,\psi\>-\<YR_{T_0}(z)\varphi,ZR_{T}(\overline z)\psi\>
\end{align}
for $\varphi,\psi\in \H$. \eqref{proof_lemma_abstract_1_1} follows from \eqref{lemma_abstract_1_0} with $f=R_{T}(z)\varphi$, $g=R_{T_0}(\overline z)\psi$, while \eqref{proof_lemma_abstract_1_2} is derived by exchanging the roles of $\varphi$ and $\psi$
in \eqref{proof_lemma_abstract_1_1} and replacing $z$ by $\overline z$. 

%We frequently use the hypothesis that $W$ commutes with $Y$, $T_0$ (and hence with $R_{T_0}(z)$). 
Let $\varphi\in \H\cap \A$, $\psi\in \H\cap \B$. Then \eqref{lemma_abstract_1_1},  \eqref{lemma_abstract_1_2} and the condition $\norm{W}_{\H\to \H}\le 1$ imply
\begin{equation}
\begin{aligned}
|\<WR_{T}(z)W\varphi,\psi\>|
&\le |\<WR_{T_0}(z)W\varphi,\psi\>|+|\<WZR_{T}(z)W\varphi,YR_{T_0}(\overline z)\psi\>|\\
\label{proof_lemma_abstract_1_3}
&\le r_1\norm{\varphi}_\A\norm{\psi}_\B+r_4\norm{WZR_{T}(z)W\varphi} \norm{\psi}_\B,
\end{aligned}
\end{equation}
Taking into account the fact that $D(Z^*)$ is dense in $\H$, it remains to deal with 
$$
\norm{WZR_{T}(z)W\varphi} =\sup\limits_{h\in D(Z^*),\, \norm{h} =1}|\<WZR_{T}(z)W\varphi,h\>|.
$$
By \eqref{lemma_abstract_1_4}, \eqref{lemma_abstract_1_2}, \eqref{lemma_abstract_1_5} and \eqref{proof_lemma_abstract_1_2} , we have
\begin{align*}
|\<WZR_{T}(z)W\varphi,h\>|
&\le |\<R_{T_0}(z)W\varphi,Z^*Wh\>|+|\<YR_{T_0}W\varphi,ZR_{T}(\overline z)Z^*Wh\>|\\
&\le \norm{ZR_{T_0}(z)W\varphi} 
+\norm{YR_{T_0}\varphi} \norm{WZR_{T}(\overline z)Z^*Wh} ,\\
&\le (r_2+r_3r_5)\norm{\varphi}_{\A}. 
\end{align*}
This bound, together with \eqref{proof_lemma_abstract_1_3}, implies \eqref{lemma_abstract_1_6}. 
\end{proof}

%remark
\begin{remark}
It is also possible to state a similar criterion to obtain the boundedness of $W(T-z)^{-1}W$ in $\mathbb B(\A,\B^*)$ from the corresponding boundedness of $W(T_0-z)^{-1}W$ and \eqref{lemma_abstract_1_4}--\eqref{lemma_abstract_1_5}. 
However, it requires additional assumptions on $\A$, $\B$ and their dual spaces, which makes the proof rather involved. On the other hand, in concrete applications, boundedness of $W(T-z)^{-1}W$ can be easily seen from \eqref{lemma_abstract_1_6} and a standard duality argument. %Hence we have formulated in a simpler expression. 
\end{remark}

%remark
\begin{remark}
In Section \ref{section_Sobolev}, we will apply this lemma to the case when $\H=L^2(\R^n)$, $\A$ and $\B$ are certain Lorentz spaces, $T_0=-\Delta$, $T=H_0$ or $H_\delta$, $Z,Y\in\{|x|^{-1},|x|V_\delta\}$ and $W$ is the  orthogonal projection onto non-radially symmetric functions. 
\end{remark}

\subsection{Uniform Sobolev inequalities for the free resolvent}
%Here we recall uniform Sobolev inequalities for the free resolvent. %We write $2_*=\frac{2n}{n+2}$ and $2^*=\frac{2n}{n-2}$ so that $\frac{1}{2_*}+\frac{1}{2^*}=1$. 
In this subsection we recall the uniform Sobolev inequality for the free resolvent $(-\Delta-z)^{-1}$, proved by Kenig-Ruiz-Sogge \cite{KRS}, Guti\'errez \cite{Gut} and Frank-Simon \cite{FrSi}, which plays a important role in the next section. We first introduce several notation. In what follows $p'=p/(p-1)$ denotes the usual H\"older conjugate of $p$. %Let $p_A=\frac{2n}{n+1}$ and $q_A=\frac{2n}{n-3}$. 
Define 10 points in $(\frac12,1)\times(0,\frac12)$ by 
\begin{align*}
A&=\Big(\frac{1}{p_A},\frac{1}{q_A}\Big):=\Big(\frac{n+1}{2n},\frac{n-3}{2n}\Big)
%=\Big(\frac{n+1}{2n},\frac{n-3}{2n}\Big)
,\quad
A'=\Big(\frac{1}{q_A'},\frac{1}{p_A'}\Big)
%=\Big(\frac{n+3}{2n},\frac{n-1}{2n}\Big)
,\\
B&:=\Big(\frac{1}{p_A},\frac{1}{p_A}-\frac{2}{n+1}\Big)
%=\Big(\frac{n+1}{2n},\frac{n^2-2n+1}{2n(n+1)}\Big)
,\quad
B'=\Big(\frac{1}{p'_A}+\frac{2}{n+1},\frac{1}{p_A'}\Big)
%=\Big(\frac{n^2+4n-1}{2n(n+1)},\frac{n-1}{2n}\Big)
,\\
C&:=\Big(\frac{n+2}{2n},\frac{n-2}{2n}\Big),\\
D&:=\Big(\frac{n+3}{2(n+1)},\frac{n-1}{2(n+1)}\Big),\\
E&:=\Big(\frac{1}{p_A},\frac{1}{p_A'}\Big)=\Big(\frac{n+1}{2n},\frac{n-1}{2n}\Big),\\
F&:=%\Big(\frac{n-2}{2n}+\frac{2}{n+1},\frac{n-2}{2n}\Big)=
\Big(\frac{n^2+3n-2}{2n(n+1)},\frac{n-2}{2n}\Big),\quad
F'=%\Big(\frac{n+2}{2n},\frac{n+2}{2n}-\frac{2}{n+1}\Big)=
\Big(\frac{n+2}{2n},\frac{n^2-n+2}{2n(n+1)}\Big),\\
G&:=%\Big(\frac{n-2}{2n}+\frac{2}{n+1}, \frac{n+2}{2n}-\frac{2}{n+1}\Big)=
\Big(\frac{n^2+3n-2}{2n(n+1)},\frac{n^2-n+2}{2n(n+1)}\Big),
\end{align*}
(see Figure \ref{figure_1} below). Let $\Omega_0\subset (\frac12,1)\times(0,\frac12)$ be a solid region defined by a trapezium $ABB'A'$ with two closed line segments $AB$ and $B'A'$ removed, and $\Omega\subset \Omega_0$ be a solid region defined by an isosceles right triangle $\triangle FCF'$ (with two points $F,F'$ removed if $n=3$), namely 
\begin{align*}
\Omega_0&:=\left\{\left(\frac1p,\frac1q\right)\ \Big|\ \frac{2}{n+1}\le \frac{1}{p}-\frac{1}{q}\le \frac{2}{n},\ \frac{n+1}{2n}<\frac 1p,\ \frac1q<\frac{n-1}{2n}\right\},\\
\Omega&:=\left\{\left(\frac1p,\frac1q\right)\in \Omega_0\ \Big|\ \frac 1p\le \frac{n+2}{2n},\ \frac{n-2}{2n}\le \frac 1q\right\}.
%\Omega&:=\begin{cases}\left\{\left(\frac1p,\frac1q\right)\in \Omega_0\ \Big|\ \frac 1p\le \frac{n+2}{2n},\ \frac{n-2}{2n}\le \frac 1q\right\},&n\ge4,\\\left\{\left(\frac1p,\frac1q\right)\in \Omega_0\ \Big|\ \frac32<\frac 1p\le \frac56,\ \frac16\le \frac 1q<\frac13\right\},&n=3.\end{cases}
\end{align*}
Note that $(1/p,1/q)\in \overline{AA'}$ (resp. $(1/p,1/q)\in \overline{BB'}$) satisfies 
$$
\frac1p-\frac1q=\frac 2n\quad \Big(\text{resp.}\ \frac1p-\frac1q=\frac{2}{n+1}\Big)
$$
and that $F=B$, $F'=B'$ and $E=G$ if $n=3$. 

For $1<p<\infty$ and $1\le s,q\le \infty$, we define a vector-valued Lorentz space $L^{p,s}_rL^q_\omega(\R^n):=L^{p,s}(\R_+,r^{n-1}dr;L^q(\mathbb S^{n-1}))$ through the norm
$$
\norm{f}_{L^{p,s}_rL^q_\omega(\R^n)}:=\big|\big|\norm{f(r\cdot)}_{L^q(\mathbb S^{n-1})}\big|\big|_{L^{p,s}((0,\infty),r^{n-1}dr)}.
$$
It is easy to see that $L^{p}_rL^q_\omega(\R^n)\subset L^{p}(\R^n)$ and $\norm{f}_{L^{p}}\le |\mathbb S|^{1/p-1/q}\norm{f}_{L^{p}_rL^q_\omega}$ if $p\le q$. By the real interpolation (see Theorems \ref{theorem_interpolation_2} and \ref{theorem_Lorentz_2}), we also have $L^{p,s}_rL^q_\omega(\R^n)\subset L^{p,s}(\R^n)$ and
\begin{align}
\label{mixed}
\norm{f}_{L^{p,s}(\R^n)}\le |\mathbb S|^{1/p-1/q}\norm{f}_{L^{p,s}_rL^q_\omega(\R^n)}
\end{align}
for all $1<p\le q<\infty$ and $1\le s\le\infty$. Furthermore, if $f$ is radially symmetric, then $\norm{f}_{L^{p,s}(\R^n)}$ is equivalent to $\norm{f}_{L^{p,s}((0,\infty),r^{n-1}dr)}$ and $\norm{f}_{L^{p,s}_rL^q_\omega(\R^n)}$ for any $1\le q\le\infty$.

%theorem
\begin{proposition}[Free uniform Sobolev inequalities]
\label{proposition_free_1}$ $\\
{\rm (1)} Let $n\ge3$ and $(1/p,1/q)\in \Omega_0$. Then there exists $C_{n,p,q}>0$ such that 
\begin{align}
\label{proposition_free_1_1}
\norm{(-\Delta-z)^{-1}}_{L^{p,s}(\R^n)\to L^{q,s}(\R^n)}\le C_{n,p,q} |z|^{\frac n2\left(\frac1p-\frac1q-\frac 2n\right)}%\norm{f}_{L^{p,s}(\R^n)}. %,\quad f\in L^{p,s}(\R^n).
\end{align}
 for all $1\le s\le\infty$ and $z\in \C\setminus[0,\infty)$. \\
{\rm (2)} Let $n\ge2$ and $(1/p,1/p')\in\overline{CE}\setminus\{C,E\}$. %, i.e., $2n/(n+2)<p<2n/(n+1)$. 
Then there exists $C_{n,p}>0$ such that
\begin{align}
\label{proposition_free_1_2}
\norm{(-\Delta-z)^{-1}}_{L^{p,s}_rL^2_\omega(\R^n)\to L^{p',s}_rL^2_\omega(\R^n)}\le C_{n,p} |z|^{-\frac{n+2}{2}+\frac np}%\norm{f}_{L^{p,s}_rL^2_\omega(\R^n)}%,\quad f\in L^{p,s}(\R^n).
\end{align}
 for all $1\le s\le\infty$ and $z\in \C\setminus[0,\infty)$
\end{proposition}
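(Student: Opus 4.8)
The statement records known uniform Sobolev inequalities for the free resolvent, so the plan is to reduce it to the results of Kenig--Ruiz--Sogge \cite{KRS}, Guti\'errez \cite{Gut} and Frank--Simon \cite{FrSi}, and then promote the strong-type $L^p\to L^q$ bounds at the ``interior'' points of $\Omega_0$ to the Lorentz-scale bounds \eqref{proposition_free_1_1} by real interpolation. For part (1), I would first recall that Kenig--Ruiz--Sogge established
$$
\norm{(-\Delta-z)^{-1}}_{L^p(\R^n)\to L^q(\R^n)}\le C_{n,p,q}|z|^{\frac n2(\frac1p-\frac1q-\frac2n)}
$$
on the line $\frac1p-\frac1q=\frac2n$ (the segment $\overline{AA'}$) by a rescaling/homogeneity argument together with an oscillatory integral estimate for the kernel, and that Guti\'errez extended it to the full range $\frac2{n+1}\le\frac1p-\frac1q\le\frac2n$ with the strict bounds $\frac{n+1}{2n}<\frac1p$, $\frac1q<\frac{n-1}{2n}$, i.e.\ to the open trapezoidal region determined by $A,B,B',A'$ (with the two side segments $\overline{AB}$, $\overline{A'B'}$ removed, exactly as in the definition of $\Omega_0$). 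The homogeneity of the scaling $z\mapsto \lambda^2 z$, $x\mapsto\lambda x$ forces the exponent $\frac n2(\frac1p-\frac1q-\frac2n)$ and also shows it suffices to treat $|z|=1$, say $z=e^{i\theta}\notin[0,\infty)$, where one further uses rotation invariance to reduce to two values of $\theta$ (or a compactness argument in $\theta$).

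To obtain the Lorentz-space refinement with the same exponent $s$ on both sides, I would fix $(1/p,1/q)$ in the open region $\Omega_0$, choose two nearby pairs $(1/p_0,1/q_0)$, $(1/p_1,1/q_1)\in\Omega_0$ on the line through $(1/p,1/q)$ of slope $1$ (so that $\frac1{p_i}-\frac1{q_i}=\frac1p-\frac1q$ and the homogeneity exponent is the same for all three), with $(1/p,1/q)$ a convex combination, and apply bilinear real interpolation (Theorems \ref{theorem_interpolation_2} and \ref{theorem_Lorentz_2}) to the bounded operator $(-\Delta-z)^{-1}:L^{p_i}\to L^{q_i}$. Since $(L^{p_0},L^{p_1})_{\theta,s}=L^{p,s}$ and $(L^{q_0},L^{q_1})_{\theta,s}=L^{q,s}$ for any $1\le s\le\infty$ when $p_0\ne p_1$, this yields \eqref{proposition_free_1_1} with a constant uniform in $s$ and in $z$ (the $z$-dependence being the common factor $|z|^{\frac n2(\frac1p-\frac1q-\frac2n)}$). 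The only point requiring a little care is that both auxiliary pairs lie strictly inside $\Omega_0$, which is possible precisely because $\Omega_0$ is the open trapezium minus its two slanted sides and $(1/p,1/q)$ is assumed to lie in it.

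For part (2), the relevant endpoint is the weighted $L^2$-estimate on the sphere: on the segment $\overline{CE}$ one has the bound $\norm{(-\Delta-z)^{-1}}_{L^p_rL^2_\omega\to L^{p'}_rL^2_\omega}\le C_{n,p}|z|^{-\frac{n+2}2+\frac np}$, which is the Frank--Simon sharpening \cite{FrSi} of the Kenig--Ruiz--Sogge bound obtained by decomposing into spherical harmonics and estimating each angular-momentum block; the line $(1/p,1/p')$ corresponds to $\frac1p-\frac1{p'}=\frac2p-1$ and the weight-exponent $-\frac{n+2}2+\frac np$ is again forced by scaling. I would quote this directly for the interior points $(1/p,1/p')\in\overline{CE}\setminus\{C,E\}$, using the scaling/rotation reduction to $|z|=1$ exactly as before; no interpolation is needed here since the claimed estimate is already stated only on the segment and with $s$ arbitrary one again interpolates between two nearby points of $\overline{CE}$ to pass from $L^p_rL^2_\omega$ to $L^{p,s}_rL^2_\omega$. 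The main obstacle, if any, is bookkeeping: one must check that the geometric description of $\Omega_0$ and of $\overline{CE}$ matches the ranges in the cited theorems and that the endpoint failures (the removed side segments of the trapezium, and the omission of $C,E$, and of $F,F'$ when $n=3$) are respected, but the analytic content is entirely contained in \cite{KRS,Gut,FrSi}.
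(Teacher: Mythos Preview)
Your proposal is correct and follows essentially the same route as the paper: reduce to $|z|=1$ by scaling, quote the Lebesgue-space estimates of \cite{KRS,Gut} for part (1) and \cite{FrSi} for part (2), and then upgrade to Lorentz spaces via real interpolation (Theorems \ref{theorem_interpolation_2} and \ref{theorem_Lorentz_2}). Your additional remark that interpolating along a slope-$1$ segment in $\Omega_0$ keeps the $|z|$-exponent fixed is a pleasant simplification but not strictly needed, since the exponent $\tfrac n2(\tfrac1p-\tfrac1q-\tfrac2n)$ is affine in $(\tfrac1p,\tfrac1q)$ and would be recovered by any interpolation.
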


%proof
\begin{proof}
Assuming $|z|=1$ and $z\neq1$, we first consider estimates in Lebesgue spaces, \emph{i.e.}, estimates with all of $L^{t,s}$ and $L^{t,s}_rL^2_\omega$ replaced by $L^t$ and $L^{t}_rL^2_\omega$, respectively. Then \eqref{proposition_free_1_1} was proved by \cite[Theorems 2.2 and 2.3]{KRS} for $(1/p,1/q)\in (\overline{AA'}\cup\overline{CD})\setminus\{A,A'\}$ and extended by \cite[Theorem 6]{Gut} to any $(1/p,1/q)\in \Omega_0$. \eqref{proposition_free_1_2} %for $(1/p,1/p')\in \overline{DE}\setminus\{D,E\}$ 
is due to \cite[Theorem 4.3]{FrSi}. Next, estimates in Lorentz spaces follow from real interpolation, more precisely, from Theorems \ref{theorem_interpolation_2} and \ref{theorem_Lorentz_2} in Appendix \ref{appendix_interpolation}. Finally, in order to remove the restriction $|z|\ge1$, it suffices to apply \eqref{proposition_free_1_1} and \eqref{proposition_free_1_2} to $f(|z|^{1/2} x)$ and use the scaling properties $\norm{f(\lambda x)}_{L^{p,s}(\R^n)}=\lambda^{-n/p}\norm{f}_{L^{p,s}(\R^n)}$ and $\norm{f(\lambda x)}_{L^{p,s}_rL^2_\omega(\R^n)}=\lambda^{-n/p}\norm{f}_{L^{p,s}_rL^2_\omega(\R^n)}$ for $\lambda>0$. 
\end{proof}

\begin{figure}[htbp]
\begin{center}
\includegraphics[width=90mm]{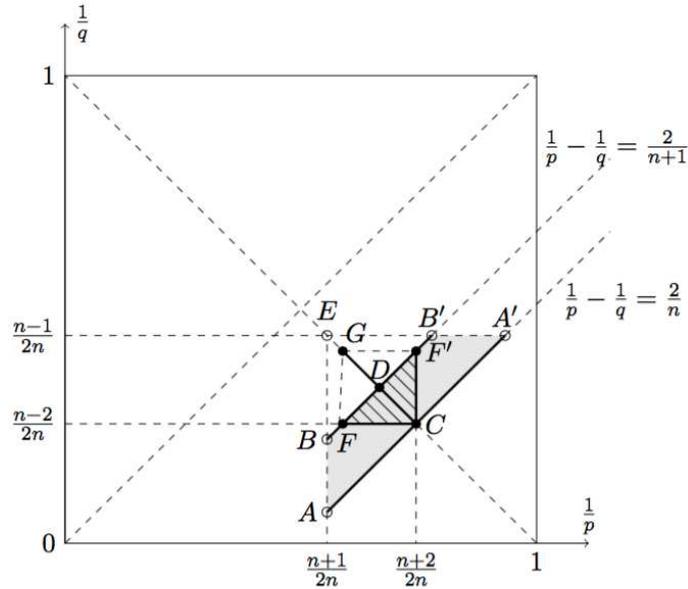}
\end{center}
\vspace{-1.5cm}
\caption{The admissible region $\Omega_0$ for Proposition \ref{proposition_free_1} (1) is the shaded region defined by the trapezium $ABB'A'$ with two closed line segments $\overline{AB}$ and $\overline{B'A'}$ removed. The admissible region $\Omega\subset \Omega_0$ for Theorem \ref{theorem_Sobolev_2} (1) is the oblique lined region defined by the isosceles right triangle $\triangle FCF'$ (with two points $F,F'$ removed if $n=3$). }
\label{figure_1}
\end{figure}
\section{Uniform Sobolev inequalities}
\label{section_Sobolev}

The purpose of this section is to prove uniform Sobolev inequalities for $(H_0-z)^{-1}$ and $(H_\delta-z)^{-1}$, which are main ingredients for the proof of the main theorems. %Furthermore, as observed in the introduction of this paper, these inequalities are of independent interest. %The uniform Sobolev inequality also plays a role in the study of unique continuation properties or the limiting absorption principle for Schr\"odinger operators with rough potentials (see \cite{KRS,GoSc}) and the main results in this section (Theorem \ref{theorem_Sobolev_2} and \ref{theorem_Sobolev_6}) are of independent interest. 

In what follows we assume $n\ge3$. Let $P$ be the projection onto the space of radial functions
$$
Pf(x)=\frac{1}{|\mathbb S^{n-1}|}\int_{\mathbb S^{n-1}}f(|x|\omega)d\sigma(\omega)
$$
and set $P^\perp=\Id-P$. 
We begin with the following proposition which plays a role in this section. 

%proposition
\begin{proposition}[Weighted resolvent estimates]
\label{proposition_Sobolev_1}$ $\\
{\rm(1)} The critical case: Let $H_0=-\Delta-C_{\Hardy}|x|^{-2}$. Then
\begin{align}
\label{proposition_Sobolev_1_1}
\sup_{z\in \C\setminus[0,\infty)}\norm{|x|^{-1}P^\perp (H_0-z)^{-1}P^\perp |x|^{-1}}_{L^2(\R^n)\to L^2(\R^n)}<\infty.
\end{align}
{\rm(2)} The subcritical case: Let $H_\delta=-\Delta+V_\delta$ with $\delta>0$. Then
\begin{align}
\label{proposition_Sobolev_1_2}
\sup_{z\in \C\setminus[0,\infty)}\norm{|x|^{-1} (H_\delta-z)^{-1} |x|^{-1}}_{L^2(\R^n)\to L^2(\R^n)}\le C\delta^{-2}.
\end{align}
Here $C>0$ may be taken uniformly with respect to $\delta$. 
\end{proposition}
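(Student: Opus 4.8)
The plan is to reduce the weighted $L^2$-bound on $|x|^{-1}(H_\bullet-z)^{-1}|x|^{-1}$ to the free case, i.e.\ to the classical uniform resolvent estimate $\sup_z\norm{|x|^{-1}(-\Delta-z)^{-1}|x|^{-1}}_{L^2\to L^2}<\infty$ for $n\ge3$, and then to exploit the Hardy-type coercivity hypotheses. First I would record the free estimate: since $|x|^{-1}\in L^{n,\infty}(\R^n)$, H\"older's inequality in Lorentz spaces together with the endpoint $(1/p,1/p')=(\tfrac{n+2}{2n},\tfrac{n-2}{2n})=C$ of the free uniform Sobolev inequality (Proposition \ref{proposition_free_1} (1), applied with $p=2n/(n+2)$, $q=p'=2n/(n-2)$, at $s=2$) gives $\norm{|x|^{-1}(-\Delta-z)^{-1}|x|^{-1}}_{L^2\to L^2}\lesssim 1$ uniformly in $z\in\C\setminus[0,\infty)$; the same bound holds with $P^\perp$ inserted since $P^\perp$ commutes with $-\Delta$ and with multiplication by radial functions and is a contraction on $L^2$.

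For the critical case (1), I would use the Birman--Schwinger / second-resolvent identity relating $H_0=-\Delta-C_{\Hardy}|x|^{-2}$ to $-\Delta$. Formally, $(H_0-z)^{-1}=(-\Delta-z)^{-1}+C_{\Hardy}(-\Delta-z)^{-1}|x|^{-1}\cdot|x|^{-1}(H_0-z)^{-1}$, which after multiplying by $|x|^{-1}P^\perp$ on the left and $|x|^{-1}P^\perp$ on the right and setting $M(z):=|x|^{-1}P^\perp(H_0-z)^{-1}P^\perp|x|^{-1}$, $M_0(z):=|x|^{-1}P^\perp(-\Delta-z)^{-1}P^\perp|x|^{-1}$, yields the identity $M(z)=M_0(z)+C_{\Hardy}\,M_0(z)M(z)$, hence $M(z)=(I-C_{\Hardy}M_0(z))^{-1}M_0(z)$ \emph{provided} $I-C_{\Hardy}M_0(z)$ is boundedly invertible on $L^2$, uniformly in $z$. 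The crucial point, and the real content of restricting to $P^\perp$, is that on the orthogonal complement of the radial functions the sharp Hardy inequality \eqref{Hardy} improves: decomposing into spherical harmonics of degree $\ell\ge1$ the effective potential $-C_{\Hardy}|x|^{-2}$ is replaced by $(\ell(\ell+n-2)-C_{\Hardy})|x|^{-2}$ with $\ell(\ell+n-2)\ge n-1>0$, so $H_0|_{P^\perp}$ is strictly subcritical; this is exactly what yields $\norm{C_{\Hardy}M_0(z)}_{L^2\to L^2}<1$ uniformly in $z$ after a careful computation of the operator norm $C_{\Hardy}\norm{|x|^{-1}(-\Delta-z)^{-1}|x|^{-1}}$ restricted to each harmonic sector (one checks the relevant constant is $C_{\Hardy}/\inf_{\ell\ge1}\big(\ell(\ell+n-2)+\tfrac{(n-2)^2}{4}\big)<1$, or one argues via the self-adjointness and positivity of the form $q_0$ on $P^\perp L^2$). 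Once $\norm{C_{\Hardy}M_0(z)}<1-\eta$ uniformly, a Neumann series gives $\norm{M(z)}\le \norm{M_0(z)}/\eta\lesssim 1$, which is \eqref{proposition_Sobolev_1_1}. A technical but routine preliminary is to justify the formal resolvent identity in the weak (weighted) sense and to verify the mapping properties $(H_0-z)^{-1}:L^2\to D(|x|^{-1})$; this is handled exactly as in the abstract scheme of Lemma \ref{lemma_abstract_1} with $T_0=-\Delta$, $T=H_0$, $Y=Z=|x|^{-1}$, $W=P^\perp$.

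For the subcritical case (2), no projection is needed: here $Y=Z=|x|V_\delta\cdot|x|^{-1}$-type factorization is less convenient, so instead I would directly use \eqref{assumption_A_1}, which says $-\Delta+V_\delta\ge-\delta\Delta$ as forms, equivalently $-\Delta\ge(1-\delta')(-\Delta)+V_\delta$ slack — more precisely, writing $H_\delta=(1-\delta)(-\Delta)+\big(\delta(-\Delta)+V_\delta\big)$ and noting $\delta(-\Delta)+V_\delta\ge0$ does \emph{not} quite suffice for a resolvent comparison, so instead I factor $V_\delta=\sgn(V_\delta)|V_\delta|^{1/2}\cdot|V_\delta|^{1/2}$ and run the same second-resolvent identity $|x|^{-1}(H_\delta-z)^{-1}|x|^{-1}=|x|^{-1}(-\Delta-z)^{-1}|x|^{-1}-|x|^{-1}(-\Delta-z)^{-1}V_\delta(H_\delta-z)^{-1}|x|^{-1}$, controlling the inserted $V_\delta$ via \eqref{assumption_A_3} and the hypothesis $|x|V_\delta\in L^{n,\infty}$ (so that $|x|^{-1}(-\Delta-z)^{-1}\,|x|\cdot\,(|x|V_\delta)$ is bounded on $L^2$ uniformly, again by the $C$-endpoint uniform Sobolev inequality and Lorentz H\"older). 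The smallness needed to sum the Neumann series is provided by \eqref{assumption_A_1}: a standard form-positivity argument shows $\norm{|V_\delta|^{1/2}(H_\delta-z)^{-1}|V_\delta|^{1/2}}\le\delta^{-1}$ (or the analogous weighted quantity $\le C\delta^{-1}$), and combining with the two free factors each costing $O(1)$ (but one of them, $|x|^{-1}(-\Delta-z)^{-1}|V_\delta|^{1/2}$, costing $O(\delta^{-1/2})$ after using $\int|V_\delta||u|^2\le C\norm{V_\delta}_{L^{n/2,\infty}}\norm{\nabla u}_{L^2}^2$ and the subcriticality to absorb constants) produces the stated $C\delta^{-2}$. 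The main obstacle in both parts is quantitative: getting the comparison operator to have norm \emph{strictly} below one uniformly in $z$ — in (1) this is where the restriction to $P^\perp$ and the sharp value of $C_{\Hardy}$ enter decisively, and in (2) this is where Assumption \ref{assumption_A}(2), not merely $V_\delta\in L^{n/2,\infty}$, is essential — and this is most cleanly achieved by diagonalizing in spherical harmonics in (1) and by a direct form-coercivity estimate in (2), after which the abstract Lemma \ref{lemma_abstract_1} packages the argument with $r_1,\dots,r_5$ all $O(1)$ (resp.\ $O(\delta^{-\text{power}})$).
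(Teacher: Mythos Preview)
Your approach is fundamentally different from the paper's, and it has a genuine gap.

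\textbf{What the paper actually does.} The proof in Appendix~\ref{appendix_resolvent} is a direct multiplier/Morawetz argument, not a perturbative one. Writing $z=\lambda+i\ep$ and $u=(H_0-z)^{-1}f$, the paper multiplies the equation $(H_0-z)u=f$ successively by $\overline u$, $r\overline u$, and the radial-virial multiplier $(2r\partial_r+n)\overline u$ to obtain five integral identities (Lemma~\ref{lemma_appendix_D_1}). For $\ep\ge\lambda$ the improved Hardy inequality on $P^\perp L^2$ (Lemma~\ref{lemma_Hardy}(1), with constant $n^2/4>C_{\Hardy}$) closes the estimate immediately. For the hard regime $0<\ep<\lambda$ the paper introduces the phase-shifted function $v_\lambda=e^{-i\lambda^{1/2}r}u$ and combines the identities into \eqref{proof_D_7}, which again closes via the improved and weighted Hardy inequalities. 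The subcritical case is handled identically, replacing the improved Hardy inequality by Assumption~\ref{assumption_A}(2).

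\textbf{Where your argument breaks.} Your Neumann-series scheme hinges on the quantitative claim
\[
\sup_{z\in\C\setminus[0,\infty)} C_{\Hardy}\,\norm{M_0(z)}_{L^2\to L^2}<1,\qquad M_0(z)=|x|^{-1}P^\perp(-\Delta-z)^{-1}P^\perp|x|^{-1}.
\]
Your justification, via the improved Hardy constant $C_\ell=C_{\Hardy}+\ell(\ell+n-2)$ on each harmonic sector, establishes this only at $z=0$ (or more generally for $z\le0$, where $(-\Delta-z)^{-1}\le(-\Delta)^{-1}$ as positive operators). For $z$ near the positive real axis the form-bound argument fails outright: the inequality $B\le cH$ does \emph{not} imply $\norm{B^{1/2}(H-z)^{-1}B^{1/2}}\le c$ uniformly in $z$ (a $2\times2$ counterexample suffices). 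The KRS endpoint estimate you invoke does give a uniform bound on $\norm{M_0(z)}$, but with an unspecified constant, not $C_\ell^{-1}$. To prove that the sharp constant $1/C_\ell$ persists uniformly on each sector would itself require a Morawetz-type argument or an explicit Hankel-function computation---either of which is comparable in difficulty to the estimate you are trying to prove, so the reduction is circular. The same defect appears in part~(2): your ``standard form-positivity argument'' for $\norm{|V_\delta|^{1/2}(H_\delta-z)^{-1}|V_\delta|^{1/2}}\le\delta^{-1}$ is valid only for $\Re z\le0$, and you give no mechanism to handle the limiting-absorption regime. This regime is precisely where the paper's phase-shifted multiplier identity \eqref{proof_D_7}, together with the virial condition \eqref{assumption_A_2} on $x\cdot\nabla V_\delta$, is doing real work.
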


%proof
\begin{proof}
\eqref{proposition_Sobolev_1_2} was proved by \cite[Section 2]{BVZ} (see also \cite[Appendix B]{BoMi}) and the proof of \eqref{proposition_Sobolev_1_1} is essentially same. For the sake of completeness, we give its details in Appendix \ref{appendix_resolvent}. 
\end{proof}

Now we state the main result in the critical case: 
%theorem
\begin{theorem}	[Uniform Sobolev inequalities in the critical case]
\label{theorem_Sobolev_2}$ $\\
{\rm (1)} Let $(1/p,1/p')\in \overline{CD}\setminus\{C\}$. %, i.e., $2n/(n+2)<p\le 2(n+1)/(n+3)$.  
Then% there exists $C_{n,p}>0$ such that
\begin{align}
\label{theorem_Sobolev_2_1}
\norm{(H_0-z)^{-1}}_{L^{p,2}(\R^n)\to L^{p',2}(\R^n)}\le C_{n,p}|z|^{-\frac{n+2}{2}+\frac np}%\norm{f}_{L^{p,2}(\R^n)}
,\ z\in \C\setminus[0,\infty). 
\end{align}
{\rm (2)} Let $(1/p,1/p')\in \overline{CG}\setminus\{C\}$ if $n\ge4$ or $(1/p,1/p')\in \overline{CG}\setminus\{C,G\}$ if $n=3$. %, i.e., $$2n/(n+2)<p\le {2n(n+1)}/{(n^2+3n-2)}\quad(6/5<p<{3}/{2}\ \text{if $n=3$}).$$
Then% there exists $C_{n,p}>0$ such that
\begin{align}
\label{theorem_Sobolev_2_2}
\norm{(H_0-z)^{-1}}_{L^{p,2}_rL^2_\omega(\R^n)\to L^{p',2}_rL^2_\omega(\R^n)}
\le C_{n,p}|z|^{-\frac{n+2}{2}+\frac np}
%\norm{f}_{L^{p,2}_rL^2_\omega(\R^n)}
,\ z\in \C\setminus[0,\infty). 
\end{align}
\end{theorem}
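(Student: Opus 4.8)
The plan is to derive both uniform Sobolev inequalities from the abstract transference lemma (Lemma \ref{lemma_abstract_1}) by taking $\H = L^2(\R^n)$, $T_0 = -\Delta$, $T = H_0$, and $Z = Y = |x|^{-1}$, so that $H_0 = -\Delta - C_{\Hardy}|x|^{-2} = T_0 - C_{\Hardy} Y^* Z$ (the sign is harmless, being absorbed into the operator $Z^*WR_T(\bar z)Z^*W$ factor). For part (1) I would take $W = \Id$ and $\A = L^{p,2}$, $\B = L^{p',2}$, with $(1/p,1/p') \in \overline{CD}\setminus\{C\}$. For part (2) I would instead take $W = P^\perp$, the orthogonal projection onto non-radial functions, which commutes with $T_0 = -\Delta$ and with the multiplication operator $Y = |x|^{-1}$ and preserves $D(Z^*)$, and set $\A = L^{p,2}_rL^2_\omega$, $\B = L^{p',2}_rL^2_\omega$, with $(1/p,1/p')$ on the segment $\overline{CG}$. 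The point of inserting $P^\perp$ in part (2) is that the weighted resolvent bound \eqref{proposition_Sobolev_1_1} only holds with the $P^\perp$ cutoffs (there is a logarithmic singularity on radial functions at $z=0$, cf. Remark \ref{remark_2}), whereas in part (1) no such cutoff is available so one is restricted to the smaller range $\overline{CD}$ where the free mixed-norm estimate of Proposition \ref{proposition_free_1}(1) on the full scale $L^{p,2}\to L^{q,2}$ suffices.

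The key verifications are the five hypotheses \eqref{lemma_abstract_1_1}--\eqref{lemma_abstract_1_5} of Lemma \ref{lemma_abstract_1}, with $z\in\C\setminus[0,\infty)$ and all constants tracked in $|z|$.
\begin{itemize}
\item[(a)] \eqref{lemma_abstract_1_1}: $|\<(-\Delta-z)^{-1}\varphi,\psi\>| \le r_1\norm{\varphi}_\A\norm{\psi}_\B$ with $r_1 \le C|z|^{-\frac{n+2}{2}+\frac np}$. For part (1) this is exactly Proposition \ref{proposition_free_1}(1) with $1/q = 1/p'$ (note $(1/p,1/p')\in\overline{CD}\subset\Omega_0$ gives $\frac1p-\frac1{p'} = \frac2n - 2(\frac1{p'}-\frac1{q_D})$... more simply, the exponent $\frac n2(\frac1p-\frac1{p'}-\frac2n) = -\frac{n+2}{2}+\frac np$). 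For part (2), since $P^\perp$ is bounded on every $L^{p,s}_rL^2_\omega$ and commutes with $-\Delta$, it follows from Proposition \ref{proposition_free_1}(2), whose admissible segment $\overline{CE}$ contains $\overline{CG}$ (recall $E=G$ when $n=3$ and $G$ lies strictly inside otherwise).
\item[(b)] \eqref{lemma_abstract_1_4}, \eqref{lemma_abstract_1_2}, \eqref{lemma_abstract_1_3}: the ``half-weighted'' bounds $\norm{|x|^{-1}(-\Delta-z)^{-1}\varphi}_{L^2}\le r_2\norm{\varphi}_\A$ and its adjoint version. These should follow by factoring $|x|^{-1}(-\Delta-z)^{-1} = \big(|x|^{-1}(-\Delta-z)^{-1}|x|^{-1}\big)^{1/2}$-type Cauchy--Schwarz arguments, or more directly by interpolating the Kato--Herbst/uniform-resolvent estimate $\norm{|x|^{-1}(-\Delta-z)^{-1}|x|^{-1}}_{L^2\to L^2}\le C$ (the $V_\delta\equiv0$, $P^\perp\to\Id$ case of Proposition \ref{proposition_Sobolev_1}) against the free Sobolev estimates; the correct power of $|z|$, namely $r_2,r_3 \le C|z|^{-\frac{n+2}{4}+\frac{n}{2p}}$, is fixed by scaling $f\mapsto f(|z|^{1/2}x)$. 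For the mixed-norm part (2) one uses instead the radial half-weighted versions, again with $P^\perp$ inserted.
\item[(c)] \eqref{lemma_abstract_1_5}: $\norm{P^\perp |x|^{-1}(H_0-\bar z)^{-1}|x|^{-1}P^\perp h}_{L^2}\le r_5\norm{h}_{L^2}$ (with $P^\perp$ replaced by $\Id$ in part (1)). In part (1) this is the free weighted estimate $\norm{|x|^{-1}(-\Delta-z)^{-1}|x|^{-1}}_{L^2\to L^2}\le C$; but wait—here $T=H_0$, not $-\Delta$, so in fact \eqref{lemma_abstract_1_5} for part (1) is $\norm{|x|^{-1}(H_0-\bar z)^{-1}|x|^{-1}}_{L^2\to L^2}$, which is \emph{not} available uniformly (that is precisely the obstruction to working on radial functions). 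The resolution is that in part (1) one restricts $p$ to $\overline{CD}$ and works without any $W$, re-running the lemma with $W=\Id$ but with $\A,\B$ chosen so that the problematic term never appears—or, more honestly, part (1) is deduced from part (2) together with the separate treatment of the radial channel via the one-dimensional Bessel-operator resolvent, which on radial functions has an explicit kernel. For part (2), \eqref{lemma_abstract_1_5} is exactly Proposition \ref{proposition_Sobolev_1}(1).
\end{itemize}
Plugging in, $r_1 + r_2r_4 + r_3r_4r_5 \le C|z|^{-\frac{n+2}{2}+\frac np}$ (all three terms carry the same homogeneity by the scaling bookkeeping), and Lemma \ref{lemma_abstract_1} yields \eqref{theorem_Sobolev_2_1} and \eqref{theorem_Sobolev_2_2} after a density/duality argument to pass from the bilinear form bound \eqref{lemma_abstract_1_6} to the operator-norm bound.

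I expect the main obstacle to be the separation of the radial and non-radial channels and, within it, handling the radial part of $(H_0-z)^{-1}$ in part (1). On non-radial functions everything is clean because \eqref{proposition_Sobolev_1_1} supplies the crucial bound \eqref{lemma_abstract_1_5} uniformly in $z$; but on radial functions $(H_0-z)^{-1}$ genuinely blows up logarithmically as $z\to0$, so one cannot expect the mixed-norm estimate on the whole of $L^{p,2}_rL^2_\omega$ down to $C$—this is exactly why $G$ (not $C$) is the endpoint, and why $n=3$ is exceptional (then $G=E$ sits on the boundary). The clean way to organize this is: decompose $f = Pf + P^\perp f$, estimate $P^\perp(H_0-z)^{-1}P^\perp$ via Lemma \ref{lemma_abstract_1} as above, and estimate $P(H_0-z)^{-1}P$ by reducing to the half-line operator $-\partial_r^2 + \frac{(n-1)(n-3)}{4}r^{-2} - \frac{(n-2)^2}{4}r^{-2} = -\partial_r^2 - \frac14 r^{-2}$ on $L^2(\R_+,dr)$, whose resolvent kernel is expressible via modified Bessel functions $K_0$; the $L^p_r\to L^{p'}_r$ mapping properties of that explicit kernel, uniform in $|z|$ after scaling, then force the restriction to $\overline{CD}$ in part (1) and to the range ending at $G$ in part (2). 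Checking the homogeneity bookkeeping (that every $r_i$ scales so that the product $r_3r_4r_5$ matches $r_1$) and the boundary/endpoint behavior at $C$, $D$, $G$ is the other place where care is needed, but it is routine once the channel decomposition is in place.
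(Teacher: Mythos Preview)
Your eventual plan---split $(H_0-z)^{-1}=P(H_0-z)^{-1}P+P^\perp(H_0-z)^{-1}P^\perp$, treat the non-radial block via Lemma~\ref{lemma_abstract_1} with $W=P^\perp$ and Proposition~\ref{proposition_Sobolev_1}(1), and treat the radial block separately---is exactly the paper's proof. Your initial attempt with $W=\Id$ for part (1) is indeed unworkable, for the reason you found: \eqref{lemma_abstract_1_5} would demand a uniform bound on $|x|^{-1}(H_0-\bar z)^{-1}|x|^{-1}$ on all of $L^2$, which fails. The paper uses $W=P^\perp$ for \emph{both} parts and then adds back the radial piece.

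Two places where the paper is cleaner than your sketch. First, for the half-weighted bounds \eqref{lemma_abstract_1_2}--\eqref{lemma_abstract_1_4} there is no need for square-root factorizations or interpolation against Kato--Herbst: one simply writes
\[
\norm{|x|^{-1}(-\Delta-z)^{-1}\varphi}_{L^2}\le C\norm{(-\Delta-z)^{-1}\varphi}_{L^{2n/(n-2),2}}\le C|z|^{\frac n2(\frac1p-\frac{n-2}{2n}-\frac2n)}\norm{\varphi}_{L^{p,2}},
\]
using H\"older with $|x|^{-1}\in L^{n,\infty}$ and then the free uniform Sobolev estimate of Proposition~\ref{proposition_free_1}(1) at the point $(1/p,(n-2)/(2n))\in\Omega_0$. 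Second, for the radial block the paper avoids direct Bessel-kernel asymptotics by observing that $H_0P=|x|^{-(n-2)/2}(-\Delta_{\R^2})|x|^{(n-2)/2}P$, so $P(H_0-z)^{-1}P$ is conjugate to the \emph{free} two-dimensional resolvent on radial functions; one then invokes the $n=2$ case of Proposition~\ref{proposition_free_1}(2) and absorbs the conjugating weights $|x|^{\pm(n-2)/2}$ by H\"older. This yields Proposition~\ref{proposition_Sobolev_5} on all of $\overline{CE}\setminus\{C,E\}$, which contains both $\overline{CD}\setminus\{C\}$ and (for $n\ge4$) $\overline{CG}\setminus\{C\}$, giving the radial contribution to both \eqref{theorem_Sobolev_2_1} and \eqref{theorem_Sobolev_2_2} at once.
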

Note that both \eqref{theorem_Sobolev_2_1} and \eqref{theorem_Sobolev_2_2} can fail for $(1/p,1/p')\in C$, {\it i.e.}, $p=\frac{2n}{n+2}$ (see Remark \ref{remark_2}). Before proving this theorem, we state a corollary  which will be used to prove Corollary \ref{corollary_3} in the next section. Define two functions $K_\gamma,K_{\gamma,\ep}^{\mathrm{rad}}:\C\setminus[0,\infty)\to \R_+$ by
\begin{align*}
K_\gamma(z)
&:=|z|^{\min(\gamma,1/2)}d(z)^{(\gamma-1/2)_+},\\
K_{\gamma,\ep}^{\mathrm{rad}}(z)
&:=|z|^{\min(\gamma,n/(n-1)-\ep)}d(z)^{(\gamma-n/(n-1)+\ep)_+}. 
\end{align*}
%corollary
\begin{corollary}
\label{corollary_Sobolev_2}
Let $\gamma>0$, $1/p_\gamma=1/(n+2\gamma)+1/2$, $\ep=0$ if $n\ge4$ and $\ep>0$ if $n=3$. Then
\begin{align}
\label{corollary_Sobolev_2_1}
\norm{(H_0-z)^{-1}}_{L^{p_\gamma,2}(\R^n)\to L^{p_\gamma',2}(\R^n)}
&\le C_{\gamma,n} 
K_\gamma(z)^{-\frac{1}{n/2+\gamma}},\\
%\norm{f}_{L^{p_\gamma,2}(\R^n)},\\
\label{corollary_Sobolev_2_2}
\norm{(H_0-z)^{-1}}_{L^{p_\gamma,2}_rL^2_\omega(\R^n)\to L^{p_\gamma',2}_rL^2_\omega(\R^n)}
&\le C_{\gamma,n,\ep}
K_{\gamma,\ep}^{\mathrm{rad}}(z)^{-\frac{1}{n/2+\gamma}}
%\norm{g}_{L^{p_\gamma,2}_rL^2_\omega(\R^n)},
\end{align}
for all $z\in \C\setminus[0,\infty)$ with some $C_{\gamma,n},C_{\gamma,n,\ep}>0$ being independent of $z$. %, $f\in L^{p_\gamma,2}(\R^n)$ and $g\in L^{p_\gamma,2}_rL^2_\omega(\R^n)$. 
\end{corollary}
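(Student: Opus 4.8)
The plan is to derive Corollary \ref{corollary_Sobolev_2} from Theorem \ref{theorem_Sobolev_2} by two elementary manipulations: first identifying the exponent $p_\gamma$ with a point on the relevant segment, and then converting the power $|z|^{-(n+2)/2+n/p_\gamma}$ into the function $K_\gamma(z)^{-1/(n/2+\gamma)}$ (resp.\ $K_{\gamma,\ep}^{\mathrm{rad}}(z)^{-1/(n/2+\gamma)}$) by interpolating between the endpoint estimate of Theorem \ref{theorem_Sobolev_2} and the trivial $L^2\to L^2$ bound $\norm{(H_0-z)^{-1}}_{L^2\to L^2}\le d(z)^{-1}$ valid by the spectral theorem since $\sigma(H_0)=[0,\infty)$.

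First I would treat the range $0<\gamma\le 1/2$ (resp.\ $0<\gamma\le n/(n-1)-\ep$). A direct computation gives $\tfrac1{p_\gamma}-\tfrac1{p_\gamma'}=\tfrac{2}{n+2\gamma}$, which for $\gamma$ in this range lies in $[\tfrac{2}{n+1},\tfrac2n)$ and makes $(1/p_\gamma,1/p_\gamma')$ a point of $\overline{CD}$ (for \eqref{corollary_Sobolev_2_1}) resp.\ of $\overline{CG}$ (for \eqref{corollary_Sobolev_2_2}), with the origin $C$ excluded since $\gamma>0$ forces $1/p_\gamma<\tfrac{n+2}{2n}$; one checks $D$ corresponds exactly to $\gamma=1/2$ and $G$ to $\gamma=n/(n-1)$, and in the case $n=3$ the point $G$ is excluded precisely when $\gamma=n/(n-1)$, matching the hypothesis $\ep>0$. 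Then Theorem \ref{theorem_Sobolev_2} applies verbatim, and since $-\tfrac{n+2}{2}+\tfrac{n}{p_\gamma}=-\tfrac{\gamma}{n/2+\gamma}$ while in this range $\min(\gamma,1/2)=\gamma$ and $(\gamma-1/2)_+=0$, we have $K_\gamma(z)=|z|^\gamma$ and the claimed bound is exactly \eqref{theorem_Sobolev_2_1}; the radial case is identical with $\overline{CG}$ in place of $\overline{CD}$.

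For $\gamma>1/2$ (resp.\ $\gamma>n/(n-1)-\ep$), $p_\gamma$ no longer lies in the admissible range, so I would instead apply Theorem \ref{theorem_Sobolev_2} at the endpoint $D$ (resp.\ $G$) and interpolate. At $D$ one has $\tfrac1p=\tfrac{n+3}{2(n+1)}$, $\tfrac1{p'}=\tfrac{n-1}{2(n+1)}$, which is the case $\gamma=1/2$, giving $\norm{(H_0-z)^{-1}}_{L^{p_{1/2},2}\to L^{p_{1/2}',2}}\le C|z|^{-1/(n+1)}$. Combining this with the spectral bound $\norm{(H_0-z)^{-1}}_{L^2\to L^2}\le d(z)^{-1}$ via complex (or real) interpolation of the pair of Lorentz spaces — using that $[L^{p_{1/2},2},L^2]_\theta=L^{p_\gamma,2}$ and $[L^{p_{1/2}',2},L^2]_\theta=L^{p_\gamma',2}$ for the appropriate $\theta=\theta(\gamma)\in(0,1)$ — yields a bound $\le C|z|^{-(1-\theta)/(n+1)}d(z)^{-\theta}$, and a short bookkeeping check confirms that the exponents match $(1-\theta)/(n+1)=\tfrac{1/2}{n/2+\gamma}$ and $\theta=\tfrac{\gamma-1/2}{n/2+\gamma}$, so the right-hand side is exactly $K_\gamma(z)^{-1/(n/2+\gamma)}=|z|^{-\frac{1/2}{n/2+\gamma}}d(z)^{-\frac{\gamma-1/2}{n/2+\gamma}}$. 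The radial estimate \eqref{corollary_Sobolev_2_2} follows the same way with $G$ (which corresponds to $\gamma=n/(n-1)$ when $n\ge4$, or to the limiting value as $\ep\to0$ when $n=3$) playing the role of $D$ and $\tfrac{n}{n-1}-\ep$ the role of $1/2$; here the small loss $\ep$ in the $n=3$ case enters through the exclusion of $G$ in Theorem \ref{theorem_Sobolev_2} (2).

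The main obstacle I anticipate is purely administrative rather than conceptual: verifying that the real interpolation identities for the Lorentz spaces hold with the second index $2$ preserved (so that the output spaces are genuinely $L^{p_\gamma,2}$ and not some $L^{p_\gamma,q}$), and keeping the two endpoint computations — locating the segments $\overline{CD}$, $\overline{CG}$ in terms of $\gamma$ and tracking the interpolation exponent $\theta$ — consistent with the slightly awkward $n=3$ caveat. All of this is handled by Theorems \ref{theorem_interpolation_2} and \ref{theorem_Lorentz_2} of Appendix \ref{appendix_interpolation} together with the scaling already used in the proof of Proposition \ref{proposition_free_1}, so no new analytic input is needed beyond Theorem \ref{theorem_Sobolev_2} and the trivial spectral bound.
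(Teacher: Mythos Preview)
Your proposal is correct and follows essentially the same route as the paper: for small $\gamma$ you locate $(1/p_\gamma,1/p_\gamma')$ on $\overline{CD}\setminus\{C\}$ (resp.\ $\overline{CG}\setminus\{C\}$) and read off the bound directly from Theorem \ref{theorem_Sobolev_2}, while for large $\gamma$ you real-interpolate the endpoint estimate at $D$ (resp.\ at $G$ for $n\ge4$, or at $p_{3/2-\ep}$ for $n=3$) against the spectral bound $\norm{(H_0-z)^{-1}}_{L^2\to L^2}\le d(z)^{-1}$, exactly as the paper does via Theorems \ref{theorem_interpolation_2} and \ref{theorem_Lorentz_2}. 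The only cosmetic difference is that the paper writes out the interpolation parameter $\theta=(2\gamma-1)/(n+2\gamma)$ explicitly and treats the $n=3$ radial endpoint separately, whereas you fold both into a single description; the content is the same.
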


%proof
\begin{proof}
Observe that $0<\gamma\le 1/2$ if and only if 
\begin{align}
\label{proof_corollary_Sobolev_2_00}
\frac{n+3}{2(n+1)}\le \frac{1}{p_\gamma}<\frac{n+2}{2n},\ \text{\emph{i.e.},}\ \Big(\frac{1}{p_\gamma},\frac{1}{p'_\gamma}\Big)\in \overline{CD}\setminus\{C\}.
\end{align}
Similarly, $0<\gamma\le n/(n-1)$ if and only if
\begin{align}
\label{proof_corollary_Sobolev_2_01}
\frac{n^2+3n-2}{2n(n+1)}\le \frac{1}{p_\gamma}<\frac{n+2}{2n},\ \text{\emph{i.e.},}\ \Big(\frac{1}{p_\gamma},\frac{1}{p'_\gamma}\Big)\in \overline{CG}\setminus\{C\}.
\end{align}
Let us first show \eqref{corollary_Sobolev_2_1}. Having \eqref{proof_corollary_Sobolev_2_00} in mind, the result for $0<\gamma\le1/2$ follows from \eqref{theorem_Sobolev_2_1} since
$$
|z|^{\frac{n+2}{2}-\frac{n}{p_\gamma}}=|z|^{-\frac{\gamma}{n/2+\gamma}}=K_\gamma(z)^{-\frac{1}{n/2+\gamma}}. 
$$
 Let $\gamma>1/2$ and $\theta\in(0,1)$ be such that
$$
1-\theta=\frac{1/p_\gamma-1/2}{1/p_{1/2}-1/2}=\frac{n+1}{n+2\gamma}
\quad\text{\emph{i.e.},}\quad\theta=\frac{2\gamma-1}{n+2\gamma}, 
$$
where $p_{1/2}=2(n+1)/(n+3)$. Interpolating between \eqref{theorem_Sobolev_2_1} with $p=p_{1/2}$ and the trivial bound 
\begin{align}
\label{proof_corollary_Sobolev_2_1}
\norm{(H_0-z)^{-1}}_{L^2\to L^2}=d(z)^{-1},
\end{align}
by means of Theorems \ref{theorem_interpolation_2} and \ref{theorem_Lorentz_2}, we have
\begin{align*}
\norm{(H_0-z)^{-1}}_{L^{p_\gamma',2}\to L^{p_\gamma,2}}
&\le C_{n,p_{1/2}}^{1-\theta}
|z|^{-\frac{1-\theta}{n+1}}
d(z)^{-\theta}=(C_{n,p_{1/2}})^{\frac{n+1}{n+2\gamma}}K_\gamma(z)^{-\frac{1}{n/2+\gamma}}.
%&= (C_{n,p_{1/2}})^{\frac{n+1}{n+2\gamma}}|z|^{-\frac{1/2}{n/2+\gamma}}d(z)^{-\frac{\gamma-1/2}{n/2+\gamma}},
\end{align*}

Next, by virtue of the equivalence \eqref{proof_corollary_Sobolev_2_01}, \eqref{corollary_Sobolev_2_2} for $0<\gamma\le n/(n-1)$ if $n\ge4$ or for $0<\gamma<3/2$ if  $n=3$ is nothing but \eqref{theorem_Sobolev_2_2}. When $\gamma>n/(n-1)$ and $n\ge4$, we take $\theta\in(0,1)$ such that
$$
1-\theta=\frac{1/p_\gamma-1/2}{1/p_{n/(n-1)}-1/2}=\frac{n(n+1)}{(n+2\gamma)(n-1)}\quad
\text{\emph{i.e.},}\quad\theta=\frac{2\gamma-n/(n-1)}{n+2\gamma},
$$
where $p_{n/(n-1)}=n(n+1)/(n^2+3n-2)$. 
%$$\frac{1}{p_{n/(n-1)}}=\frac{1}{n+\frac{2n}{n-1}}+\frac12=\frac{n-1}{n^2+n}+\frac12=\frac{2n-2+n^2+n}{n(n+1)}$$
Interpolating between \eqref{theorem_Sobolev_2_2} with $p=p_{n/(n-1)}$ and \eqref{proof_corollary_Sobolev_2_1}, we then have \eqref{corollary_Sobolev_2_2} since 
\begin{align*}
(1-\theta)\Big(-\frac{n+2}{2}+\frac{n}{p_{n/n-1}}\Big)
%&=\frac{n(n+1)}{(n+2\gamma)(n-1)}\Big(\frac{n}{n-2n/(n-1)}-1\Big)\\
=-\frac{n}{(n/2+\gamma)(n-1)}. 
\end{align*}
\eqref{corollary_Sobolev_2_2} for $n=3$ and $\gamma\ge 3/2$ follows by interpolating between \eqref{theorem_Sobolev_2_2} with $p=p_{3/2-\ep}$ and \eqref{proof_corollary_Sobolev_2_1}. 
\end{proof}

Next we prove Theorem \ref{theorem_Sobolev_2}, where the proof consists of two parts. We first  observe that, since $H_0$ is rotationally invariant, both $P$ and $P^\perp$ commute with $H_0$ and thus with $(H_0-z)^{-1}$. Hence $(H_0-z)^{-1}$ can be decomposed as
$$
(H_0-z)^{-1}=P(H_0-z)^{-1}P+P^\perp (H_0-z)^{-1}P^\perp.
$$ 
We begin with the non-radial part $P^\perp (H_0-z)^{-1}P^\perp$ which satisfies uniform Sobolev inequalities even for $p=2n/(n+2)$ as follows:
%proposition
\begin{proposition}
\label{proposition_Sobolev_4}
Let $(1/p,1/q)\in \Omega$ and $z\in \C\setminus[0,\infty)$. Then
\begin{align}
\label{proposition_Sobolev_4_1}
\norm{P^\perp (H_0-z)^{-1}P^\perp}_{L^{q,2}(\R^n)\to L^{p,2}(\R^n)}
\le C_{n,p,q}|z|^{\frac n2\left(\frac1p-\frac1q-\frac 2n\right)}. 
%\norm{f}_{L^{p,2}(\R^n)}.
\end{align}
Furthermore, if $(1/p,1/p')\in \overline{CG}$ for $n\ge4$ or $(1/p,1/p')\in \overline{CG}\setminus\{G\}$ for $n=3$, then
\begin{align}
\label{proposition_Sobolev_4_2}
\norm{P^\perp(H_0-z)^{-1}P^\perp}_{L^{p',2}_rL^2_\omega(\R^n)\to L^{p,2}_rL^2_\omega(\R^n)}
\le C_{n,p}|z|^{-\frac{n+2}{2}+\frac np}.
\end{align}
\end{proposition}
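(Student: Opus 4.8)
The plan is to apply the abstract resolvent comparison Lemma \ref{lemma_abstract_1} with $\H = L^2(\R^n)$, $T_0 = -\Delta$, $T = H_0 = -\Delta - C_{\Hardy}|x|^{-2}$, $Y = Z = C_{\Hardy}^{1/2}|x|^{-1}$, $W = P^\perp$, and $(\A,\B)$ chosen as the relevant Lorentz spaces. First I would check the structural hypotheses: that $T = T_0 + Y^*Z = T_0 - C_{\Hardy}|x|^{-2}$ holds in the form sense \eqref{lemma_abstract_1_0}, which follows from Hardy's inequality \eqref{Hardy} and the definition of $H_0$ via its form $q_0$; that $W = P^\perp$ is a bounded self-adjoint projection with $\norm{P^\perp}_{L^2\to L^2}\le 1$ commuting with $T_0 = -\Delta$ and with $Y = C_{\Hardy}^{1/2}|x|^{-1}$ (both being rotationally invariant) and preserving $D(Z^*) = D(|x|^{-1})$; and that $z \in \rho(T_0)\cap\rho(T)$ for $z\in\C\setminus[0,\infty)$, which holds since $\sigma(-\Delta) = \sigma(H_0) = [0,\infty)$.

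The main work is verifying the five quantitative inputs \eqref{lemma_abstract_1_1}--\eqref{lemma_abstract_1_5} with constants that combine to give the claimed $|z|$-powers. Input \eqref{lemma_abstract_1_1}, the bound on $\langle P^\perp(-\Delta-z)^{-1}P^\perp\varphi,\psi\rangle$, comes from the free uniform Sobolev inequalities in Proposition \ref{proposition_free_1}: for the first assertion, \eqref{proposition_free_1_1} on $\Omega_0\supset\Omega$ with exponent $\frac n2(\frac1p-\frac1q-\frac2n)$; for the second assertion, \eqref{proposition_free_1_2} in the mixed-norm spaces with exponent $-\frac{n+2}{2}+\frac np$ on $\overline{CE}\setminus\{C,E\}$, after noting $\overline{CG}\subset\overline{CE}$ and handling the endpoints. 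Inputs \eqref{lemma_abstract_1_4}, \eqref{lemma_abstract_1_2}, \eqref{lemma_abstract_1_3} are the mapping bounds $\norm{|x|^{-1}(-\Delta-z)^{-1}}_{\A\to L^2}$ and $\norm{|x|^{-1}(-\Delta-\bar z)^{-1}}_{\B\to L^2}$; these again follow from Proposition \ref{proposition_free_1} together with Hölder/Sobolev-type embeddings for $|x|^{-1}$ between Lorentz spaces (as used to derive \eqref{assumption_A_3}), tracking the exact homogeneity so that the $|z|$-powers from $r_2 r_4$ and $r_3 r_4 r_5$ match that of $r_1$ under scaling $f\mapsto f(|z|^{1/2}x)$. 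Input \eqref{lemma_abstract_1_5}, the bound $\norm{P^\perp |x|^{-1}(H_0-\bar z)^{-1}|x|^{-1}P^\perp}_{L^2\to L^2}$ uniform in $z$, is exactly Proposition \ref{proposition_Sobolev_1} (1), i.e. \eqref{proposition_Sobolev_1_1} (using that $P^\perp$ commutes with $|x|^{-1}$ and $H_0$ so the operator in \eqref{lemma_abstract_1_5} equals $P^\perp|x|^{-1}P^\perp(H_0-\bar z)^{-1}P^\perp|x|^{-1}P^\perp$).

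Finally I would assemble the pieces: Lemma \ref{lemma_abstract_1} yields $|\langle P^\perp(H_0-z)^{-1}P^\perp\varphi,\psi\rangle|\le (r_1 + r_2 r_4 + r_3 r_4 r_5)\norm{\varphi}_\A\norm{\psi}_\B$, and by the scaling bookkeeping all three terms carry the same power of $|z|$, namely $|z|^{\frac n2(\frac1p-\frac1q-\frac2n)}$ for the first statement and $|z|^{-\frac{n+2}{2}+\frac np}$ for the second. A standard duality argument then converts the bilinear estimate (with $\A = L^{q,2}$, $\B$ the predual of $L^{p,2}$, resp. the mixed-norm analogues) into the operator-norm bounds \eqref{proposition_Sobolev_4_1} and \eqref{proposition_Sobolev_4_2}. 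I expect the main obstacle to be purely organizational: choosing the admissible exponent regions for $(\A,\B)$ so that \emph{all} of \eqref{lemma_abstract_1_1}--\eqref{lemma_abstract_1_4} hold simultaneously on $\Omega$ (resp. $\overline{CG}$), verifying the endpoint behavior at $G$ for $n=3$ (where $E = G$ and \eqref{proposition_free_1_2} excludes $E$, forcing the removal of $G$), and confirming that the weighted free estimates $r_2, r_3, r_4$ are genuinely available on exactly those regions — the $|z|$-power matching itself is then automatic from homogeneity, and the deep input (uniform boundedness at the critical weight $|x|^{-1}$) is already supplied by Proposition \ref{proposition_Sobolev_1}.
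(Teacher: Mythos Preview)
Your proposal is correct and follows essentially the same route as the paper's proof: apply Lemma~\ref{lemma_abstract_1} with $T_0=-\Delta$, $T=H_0$, $W=P^\perp$, feed in the free uniform Sobolev inequalities (Proposition~\ref{proposition_free_1}) for $r_1,\dots,r_4$ via H\"older's inequality with the weight $|x|^{-1}\in L^{n,\infty}$, and use Proposition~\ref{proposition_Sobolev_1}\,(1) for $r_5$. One trivial slip: with your choice $Y=Z=C_{\Hardy}^{1/2}|x|^{-1}$ one gets $Y^*Z=+C_{\Hardy}|x|^{-2}$, the wrong sign; the paper takes $Y=-C_{\Hardy}|x|^{-1}$, $Z=|x|^{-1}$ instead, which fixes this without changing anything else.
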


%proof
\begin{proof}
The proof is based on Lemma \ref{lemma_abstract_1} and Proposition \ref{proposition_free_1}. 
%As in Proposition \ref{proposition_free_1}, we may assume $|z|=1$. 
We first let $(1/p,1/q)\in \Omega$ and prove \eqref{proposition_Sobolev_4_1} by means of Lemma \ref{lemma_abstract_1} with $\H=L^2$, $\A=L^{p,2}$, $\B=L^{q',2}$, $T_0=-\Delta$, $T=H_0$, $Y=-C_{\Hardy}|x|^{-1}$, $Z=|x|^{-1}$ and $W=P^\perp$. Since $(1/p,1/q)$, $(1/p,(n-2)/(2n))$, $(1/q',(n-2)/(2n))\in \Omega_0$ (see Figure \ref{figure_1}), Proposition \ref{proposition_free_1} and H\"older's inequality yield that for all $\varphi\in L^2\cap L^{p,2}$, $\psi\in L^2\cap L^{q',2}$, 
\begin{align}
%\label{proof_Sobolev_4_1}\norm{P^\perp(-\Delta-z)^{-1}P^\perp \varphi}_{L^{q,2}}&\le C\norm{\varphi}_{L^{p,2}},\\
%\label{proof_proposition_Sobolev_4_1}
\label{proof_Sobolev_4_2}
\norm{|x|^{-1}(-\Delta-z)^{-1}\varphi}_{L^{2}}
&\le C\norm{(-\Delta-z)^{-1}\varphi}_{L^{\frac{2n}{n-2},2}}
\le C|z|^{\frac n2\left(\frac1p-\frac{n-2}{2n}-\frac 2n\right)}\norm{\varphi}_{L^{p,2}},\\
\nonumber
\norm{|x|^{-1}(-\Delta-z)^{-1}\psi}_{L^{2}}
&\le C|z|^{\frac n2\left(1-\frac1q-\frac{n-2}{2n}-\frac 2n\right)}\norm{\psi}_{L^{q',2}}. 
%\label{proof_Sobolev_4_3}
%\norm{r^{-1}P^\perp(H_0-z)^{-1}P^\perp r^{-1}h}_{L^{2}}&\le C\norm{h}_{L^{2}}.
\end{align}
Using \eqref{proposition_free_1_1},  \eqref{proposition_Sobolev_1_1} and these two estimates, we can apply Lemma \ref{lemma_abstract_1} to obtain
$$
|\<P^\perp(H_0-z)^{-1}P^\perp\varphi,\psi\>|\le C|z|^{\frac n2\left(\frac1p-\frac1q-\frac 2n\right)}\norm{\varphi}_{L^{p,2}}\norm{\psi}_{L^{q',2}},
$$
%for all $\varphi\in L^2\cap L^{p,2}$ and $\psi\in L^2\cap L^{q',2}$, 
which implies \eqref{proposition_Sobolev_4_1} by density and duality arguments. 

In order to show \eqref{proposition_Sobolev_4_2} for $(1/p,1/p')\in \overline{CG}\setminus\{G\}$, we observe that $({1}/{p},{(n-2)}/{(2n)})\in \Omega_0$ (see Figure \ref{figure_1}). Taking the fact $L^{p,2}_rL^2_\omega\subset L^{p,2}$ and $L^{p',2} \subset L^{p',2}_rL^2_\omega$ since $p<2$ into account and using \eqref{proposition_free_1_2}, \eqref{proposition_Sobolev_1_1} and \eqref{proof_Sobolev_4_2}, one can use Lemma \ref{lemma_abstract_1} with $\A=\B=L^{p,2}_rL^2_\omega$ to obtain \eqref{proposition_Sobolev_4_2}  for $(1/p,1/p')\in \overline{CG}\setminus\{G\}$ and $n\ge3$. Finally, when $n\ge4$, $F\in \Omega_0$ so we thus  have the endpoint estimate \eqref{proposition_Sobolev_4_2} with $(1/p,1/p')=G$ by the same argument. 
\end{proof}

It remains to deal with the radially symmetric part $P(H_0-z)^{-1}P$. 
%proposition
\begin{proposition}
\label{proposition_Sobolev_5}
For any $(1/p,1/p')\in \overline{CE}\setminus\{C,E\}$ there exists $C_{n,p}>0$ such that
$$
\norm{P (H_0-z)^{-1}P}_{L^{p,2}(\R^n)\to L^{p',2}(\R^n)}\le C_{n,p}|z|^{\frac{n+2}{2}-\frac np},\quad z\in \C\setminus[0,\infty).%,\quad f\in L^{p,2}(\R^n).
$$
\end{proposition}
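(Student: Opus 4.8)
The plan is to reduce the inequality to a one-dimensional problem by exploiting rotational invariance. Since $P$ projects onto radial functions and $H_0$ is rotationally invariant, $P(H_0-z)^{-1}P$ acts only on the radial sector. For a radial function $f(x)=g(|x|)$ on $\R^n$, writing $r=|x|$ and using the substitution $g(r)=r^{-(n-1)/2}h(r)$, the operator $H_0 = -\Delta - C_{\Hardy}|x|^{-2}$ restricted to radial functions becomes, in the variable $h\in L^2((0,\infty),dr)$, the half-line Schr\"odinger operator $-\frac{d^2}{dr^2} + \frac{\nu^2-1/4}{r^2}$ with $\nu^2 = \frac{(n-2)^2}{4} + \frac{(n-1)(n-3)}{4} - C_{\Hardy} = \big(\frac{n-2}{2}\big)^2 + \big(\frac{n-2}{2}\big)^2 - \big(\frac{n-2}{2}\big)^2$; the point of the critical coupling $C_{\Hardy}=(n-2)^2/4$ is precisely that this reduces to $\nu = 1/2$, i.e. the Bessel order is exactly the free $1/2$ threshold, so the radial part of $H_0$ is (after this transformation) essentially the one-dimensional free Laplacian $-d^2/dr^2$ on the half-line with the appropriate boundary behavior at $r=0$. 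Its resolvent kernel is explicit.

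First I would make this transformation precise and identify the kernel of $P(H_0-z)^{-1}P$ as an explicit integral operator on $(0,\infty)$ with respect to the measure $r^{n-1}dr$, writing $z = k^2$ with $\Im k > 0$. The kernel will be of the form $c\, r^{-(n-1)/2} \rho^{-(n-1)/2} K(r,\rho;k)$ where $K$ is built from $e^{\pm ik r}$, $e^{\pm ik\rho}$ (i.e. essentially the half-line free resolvent kernel $\frac{i}{2k}(e^{ik|r-\rho|} - e^{ik(r+\rho)})$ for the Dirichlet problem, or the corresponding $\sin/\cos$ combination). Next I would estimate the $L^{p,2}\to L^{p',2}$ norm of this operator. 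Because the kernel in the $h$-variable is (up to the $r^{-(n-1)/2}$ weights which are exactly absorbed into the change of measure $r^{n-1}dr \mapsto dr$) that of the one-dimensional free resolvent, I expect to invoke the one-dimensional uniform Sobolev / Kenig–Ruiz–Sogge estimate for $-d^2/dr^2$ on the half-line, which is classical and which on Lorentz spaces follows by the same real interpolation (Theorems \ref{theorem_interpolation_2} and \ref{theorem_Lorentz_2}) already used for Proposition \ref{proposition_free_1}. The scaling $|z|^{(n+2)/2 - n/p}$ then comes out by the same rescaling $h(\cdot) \mapsto h(|z|^{1/2}\cdot)$ argument as in the proof of Proposition \ref{proposition_free_1}, so as usual it suffices to treat $|z|=1$.

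Alternatively — and this may be cleaner to write — I would observe that by an explicit computation the radial part of $(H_0-z)^{-1}$ differs from the radial part of $(-\Delta - z)^{-1}$ only by a rank-type correction term coming from the different boundary condition at $r=0$ forced by the criticality of the inverse-square potential (this is the source of the logarithmic singularity at $z=0$ mentioned in Remark \ref{remark_2} and the failure at $p=2n/(n+2)$, i.e. at the point $C$). One writes $P(H_0-z)^{-1}P = P(-\Delta-z)^{-1}P + R(z)$, bounds the first term using Proposition \ref{proposition_free_1}(2) restricted to radial functions (valid on $\overline{CE}\setminus\{C,E\}$), and estimates the correction $R(z)$ directly from its explicit kernel, which is a product of a function of $r$ and a function of $\rho$ built from the solution regular at the origin; here the constraint $(1/p,1/p')\neq C, E$ is exactly what is needed for the relevant one-dimensional Hardy–Littlewood–Sobolev-type bound on this rank-one-like piece to hold with the correct power of $|z|$.

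The main obstacle I anticipate is the careful analysis near $r=0$: the criticality of $C_{\Hardy}|x|^{-2}$ means the two solutions of the radial ODE at $z=0$ behave like $r^{-(n-2)/2}$ and $r^{-(n-2)/2}\log r$ rather than $r^{-(n-2)/2}$ and $r^{0}$, so one must correctly identify which solution is selected by the form domain $D(q_0)$ (equivalently, which boundary condition defines the self-adjoint realization $H_0$), and then control the resulting correction kernel uniformly in $z\in\C\setminus[0,\infty)$. This is precisely the place where the endpoint $p = 2n/(n+2)$ (the point $C$) is lost, and keeping track of the dependence on $|z|$ through this correction term — rather than just on $|z|=1$ — while it reduces to scaling, must be done with the endpoints excluded; establishing the bound at the non-endpoint points of $\overline{CE}$ and then noting the constant blows up as one approaches $C$ (consistently with Remark \ref{remark_2}) is the crux.
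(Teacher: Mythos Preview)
Your first approach contains a genuine computational error that would derail it. After the substitution $g(r)=r^{-(n-1)/2}h(r)$, the radial part of $-\Delta$ becomes $-\frac{d^2}{dr^2}+\frac{(n-1)(n-3)}{4r^2}$, so the radial part of $H_0=-\Delta-C_{\Hardy}|x|^{-2}$ becomes
\[
-\frac{d^2}{dr^2}+\frac{1}{r^2}\Big(\frac{(n-1)(n-3)}{4}-\frac{(n-2)^2}{4}\Big)=-\frac{d^2}{dr^2}-\frac{1}{4r^2},
\]
i.e.\ the Bessel operator with $\nu^2-1/4=-1/4$, hence $\nu=0$, not $\nu=1/2$. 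Thus the radial sector of $H_0$ is \emph{not} the free half-line Laplacian; in particular your explicit kernel $\frac{i}{2k}\big(e^{ik|r-\rho|}-e^{ik(r+\rho)}\big)$ is wrong, and the one-dimensional Kenig--Ruiz--Sogge estimate does not apply directly. The case $\nu=0$ is exactly the limit-circle case whose resolvent kernel involves $J_0$ and $H_0^{(1)}$ and carries the logarithmic singularity at $z=0$ you correctly anticipate in your last paragraph---but this is inconsistent with the free-1D picture you set up earlier.

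The paper exploits precisely the observation that $\nu=0$ is the Bessel order of the \emph{two-dimensional} radial Laplacian: $H_0$ restricted to radial functions is unitarily equivalent to $-\Delta_{\R^2}$ restricted to radial functions via $f\mapsto c_n|x|^{(n-2)/2}f$, so that
\[
P(H_0-z)^{-1}P=P\,|x|^{-(n-2)/2}\,(-\Delta_{\R^2}-z)^{-1}\,|x|^{(n-2)/2}\,P.
\]
One then applies the two-dimensional uniform Sobolev inequality (Proposition~\ref{proposition_free_1}\,(2) with $n=2$, where the admissible range is $1<q<4/3$) and absorbs the power weights $|x|^{\pm(n-2)/2}$ by H\"older's inequality in Lorentz spaces, matching the exponents so that the two-dimensional range corresponds exactly to $(1/p,1/p')\in\overline{CE}\setminus\{C,E\}$. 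This is both the correct and the cleanest route; your second approach (free $n$-dimensional resolvent plus a rank-one-type correction) could presumably be pushed through, but the correction term is not genuinely rank one and its direct $L^{p,2}\to L^{p',2}$ estimation would essentially rederive the 2D resolvent bound by hand.
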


%proof
\begin{proof}
Since $\overline{CE}\setminus\{C,E\}$ is an open line segment, by virtue of real interpolation (Theorems \ref{theorem_interpolation_2} and \ref{theorem_Lorentz_2}), it suffices to show corresponding $L^p$-$L^{p'}$ estimates. As above we may assume $|z|=1$. 

The result is deduced from the two-dimensional free uniform Sobolev inequalities as follows. Recall that $H_0$ restricted to radial functions is unitarily equivalent to $-\Delta_{\R^2}$ restricted to radial functions by means of the unitary map $L^2_{\mathrm{rad}}(\R^n)\ni f\mapsto \sqrt{\frac{|\mathbb S^{n-1}|}{2\pi}}|x|^{(n-2)/2}f\in L^2_{\mathrm{rad}}(\R^2)$, namely $$H_0P=-|x|^{-(n-2)/2}\Delta_{\R^2}|x|^{(n-2)/2}P$$
on $D(H_0)$. This formula follows from the identity 
$$
\<-|x|^{-(n-2)/2}\Delta_{\R^2}|x|^{(n-2)/2}f,g\>=\left\langle \Big(-\frac{d^2}{dr^2}-\frac{n-1}{r}\frac{d}{dr}-\frac{(n-2)^2}{4r^2}\Big)f,g\right\rangle=\<H_0f,g\>
$$ 
for all radially symmetric functions $f,g\in C_0^\infty(\R^n)$ and the fact that $C_0^\infty(\R^n)$ is dense in $D(q_0)$, where $r=|x|$.  %This formula follows from the identity $$-\<|x|^{-(n-2)/2}\Delta_{\R^2}|x|^{(n-2)/2}f,g\>=\<H_0f,g\>$$ for all radial functions $f,g\in C_0^\infty(\R^n)$, which can be verified by a direct computation, and the fact that $C_0^\infty(\R^n)$ is dense in $D(H_0^{1/2})$.  
In particular, we have
\begin{align}
\label{proof_proposition_Sobolev_5_1}
P (H_0-z)^{-1}P=P|x|^{-(n-2)/2}(-\Delta_{\R^2}-z)^{-1}|x|^{(n-2)/2}P. 
\end{align}
Let $f\in C_0^\infty(\R^n)$ and  set $v=(-\Delta_{\R^2}-z)^{-1}|x|^{(n-2)/2}Pf$. 
%if we set $u=P(H_0-z)^{-1}Pf$ with$f\in C_0^\infty(\R^n)$  Then $u$ solves the following Helmholtz equation$$(\partial_r^2-(n-1)r^{-1}\partial_r-C_{\Hardy}r^{-2}-z)u=Pf, $$where $r=|x|$. A key observation is that $v:=r^{\frac{n-2}{2}}u$ is also radial and satisfies$$(\partial_r^2-r^{-1}\partial_r-z)v=r^{\frac{n-2}{2}}Pf.$$Hence $v=P(-\Delta_{\R^2}-z)^{-1}r^{\frac{n-2}{2}}Pf$ as a function in $\R^2$. 
Take $q,\tau_1\ge1$ so that
$$
\frac{n+2}{2}-\frac np=2-\frac{2}{q},\quad \frac{1}{p'}=\frac{1}{\tau_1}+\frac{1}{q'},\quad 
\frac{1}{\tau_1}=\frac{(n-2)}{2}\Big(\frac12-\frac{1}{p'}\Big). 
$$
Note that $1<q<4/3$ and hence $(1/q,1/q')$ satisfies the admissible condition for free uniform Sobolev inequalities \eqref{proposition_free_1_2} with $n=2$. Then we learn by \eqref{proof_proposition_Sobolev_5_1} and H\"older's inequality that
\begin{align*}
\norm{P (H_0-z)^{-1}P f}_{L^{p'}(\R^n)}
&=\Big(\frac{|\mathbb S^{n-1}|}{2\pi}\Big)^{1/p'}\big|\big||x|^{-(n-2)\left(\frac12-\frac{1}{p'}\right)}v\big|\big|_{L^{p'}(\R^2)}\\
&\le C\big|\big|{|x|^{-\frac2\tau_1}}\big|\big|_{L^{\tau_1,\infty}(\R^2)}\norm{v}_{L^{q',p'}(\R^2)}\\
&\le C\norm{v}_{L^{q',p'}(\R^2)}.
\end{align*}
Taking into account the fact that both $v$ and $|x|^{(n-2)/2}Pf$ are radially symmetric,  
one can use \eqref{proposition_free_1_2} with $n=2$ to obtain
\begin{align*}
\norm{v}_{L^{q',p'}(\R^2)}
&\le C|z|^{2-\frac2q}\big|\big|{|x|^{\frac{n-2}{2}}P f}\big|\big|_{L^{q,p'}(\R^2)}\\
&=C|z|^{\frac{n+2}{2}-\frac np}\big|\big|{|x|^{\frac{n-2}{2}}P f}\big|\big|_{L^{q,p'}(\R^2)}.
\end{align*}
Since $L^{q,p}\,\hookr\,L^{q,p'}$ (note that $p'>p$), H\"older's inequality with exponents
$$
\frac{1}{q}=\frac{1}{\tau_2}+\frac{1}{p},\quad \frac{1}{\tau_2}=\frac{(n-2)}{2}\Big(\frac12-\frac1p\Big)
$$
yields that
\begin{align*}
\norm{|x|^{\frac{n-2}{2}}P f}_{L^{q,p'}(\R^2)}
&\le C\big|\big|{|x|^{\frac{n-2}{2}}Pf}\big|\big|_{L^{q,p}(\R^2)}\\
&\le C\big|\big|{|x|^{-\frac{2}{\tau_2}}}\big|\big|_{L^{\tau_2,\infty}(\R^2)}\big|\big|{|x|^{\frac{n-2}{p}}Pf}\big|\big|_{L^{p}(\R^2)}\\
&\le C\norm{f}_{L^p(\R^n)},
\end{align*}
which completes the proof. %As already mentioned, estimates in Lorentz spaces follow from real interpolation. 
\end{proof}

%proof
\begin{proof}[Proof of Theorem \ref{theorem_Sobolev_2}]
Theorem \ref{theorem_Sobolev_2} readily follows from Propositions \ref{proposition_Sobolev_4} and \ref{proposition_Sobolev_5}. 
\end{proof}

%remark
\begin{remark}
\label{remark_2}
Let $\kappa=-iz^{1/2}$ with $z=-\kappa^2$ for $\Re z<0$. It is known (see, \emph{e.g}, \cite{JeNe}) that$$(-\Delta_{\R^2}+\kappa^2)^{-1}=-\frac{\log \kappa}{2\pi} P_0+O(1),\quad |\kappa|\to0$$ in $\mathbb B(L^2(\R^2,\<x\>^{s}dx),L^2(\R^2,\<x\>^{-s}dx))$ for $s>3/2$, where $P_0f:=\int_{\R^2} f(x)dx$. Together with \eqref{proof_proposition_Sobolev_5_1} and \eqref{proposition_Sobolev_4_1} for $(p,{p'})=(\frac{2n}{n+2},\frac{2n}{n-2})$, this implies the low energy asymptotics:% of $(H_0+\kappa^2)^{-1}$: 
$$(H_0+\kappa^2)^{-1}=-\frac{\log \kappa}{2\pi} P|x|^{-\frac{n-2}{2}}P_0|x|^{\frac{n-2}{2}}P+O(1),\quad |\kappa|\to0$$in $\mathbb B(L^2(\R^n,\<x\>^{s}dx),L^2(\R^n,\<x\>^{-s}dx))$ for sufficiently large $s$. In particular, $(H_0-z)^{-1}$ has a logarithmic singularity at $z=0$ and both \eqref{theorem_Sobolev_2_1} and \eqref{theorem_Sobolev_2_2}  thus cannot hold for $p=\frac{2n}{n+2}$. \end{remark}

Next we consider the subcritical case. The main result in this case is the following. 
%theorem
\begin{theorem}	[Uniform Sobolev inequalities in the subcritical case]
\label{theorem_Sobolev_6}
Let $n\ge3$ and $\delta>0$. Then, for any $(1/p,1/q)\in\Omega$ there exists $C_{n,p,q}>0$ (independent of $\delta$) such that
$$
\norm{(H_\delta-z)^{-1}}_{L^{p,2}(\R^n)\to L^{q,2}(\R^n)}
\le C_{n,p,q} \delta^{-2}|z|^{\frac n2\left(\frac1p-\frac1q-\frac 2n\right)}
%\norm{f}_{L^{p,2}(\R^n)},\quad 
,\ z\in \C\setminus[0,\infty). 
$$
Furthermore, if $(1/p,1/p')\in \overline{CG}$  for $n\ge4$ or $(1/p,1/p')\in \overline{CG}\setminus\{G\}$ for $n=3$ then 
$$
\norm{(H_\delta-z)^{-1}}_{L^{p,2}_rL^2_\omega(\R^n)\to L^{p',2}_rL^2_\omega(\R^n)}
\le C_{n,p} \delta^{-2}|z|^{\frac n2\left(\frac1p-\frac1q-\frac 2n\right)}
%\norm{f}_{L^{p,2}_rL^2_\omega(\R^n)}
,\quad z\in \C\setminus[0,\infty). 
$$
%for all $z\in \C\setminus[0,\infty)$ and $f\in L^{p,2}_rL^2_\omega(\R^n)$. 
\end{theorem}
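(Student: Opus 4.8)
The plan is to mimic the proof of Theorem \ref{theorem_Sobolev_2}, replacing the weighted estimate \eqref{proposition_Sobolev_1_1} for $H_0$ by the weighted estimate \eqref{proposition_Sobolev_1_2} for $H_\delta$, and tracking the dependence on $\delta$. Since $H_\delta$ is not rotationally invariant in general, I cannot split into radial and non-radial parts; instead I apply Lemma \ref{lemma_abstract_1} directly with $W\equiv\Id$. Specifically, for the first estimate I take $\H=L^2$, $\A=L^{p,2}$, $\B=L^{q',2}$, $T_0=-\Delta$, $T=H_\delta$, and I must factor the perturbation $V_\delta=Y^*Z$. A natural choice is $Z=|x|^{-1}$ and $Y=|x|V_\delta$ (so $Y^*Z=V_\delta|x|^{-1}\cdot|x|^{-1}\cdot$ — one has to be slightly careful and rather set $Z=|x|^{-1}\sgn V_\delta |V_\delta|^{1/2}\cdot|x|^{1/2}$ type factorization, but the cleanest is $Z=|x|^{-1}$, $Y^*=V_\delta|x|$, i.e. $Y=\overline{V_\delta|x|}=V_\delta|x|$ since $V_\delta$ is real; then $Y^*Z=V_\delta|x|\cdot|x|^{-1}=V_\delta$ as operators, which is exactly \eqref{lemma_abstract_1_0}).

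The key steps, in order: First, verify \eqref{lemma_abstract_1_-1}–\eqref{lemma_abstract_1_0}, i.e. that $V_\delta$ factors as $Y^*Z$ with $Z=|x|^{-1}$, $Y=|x|V_\delta$, and that both $D(-\Delta)$ and $D(H_\delta)$ embed in $D(|x|^{-1})\cap D(|x|V_\delta)$ — this uses $|x|V_\delta\in L^{n,\infty}$, Hardy's inequality, and the form domain identity $D(H_\delta^{1/2})=D((-\Delta)^{1/2})$ coming from \eqref{assumption_A_1} and \eqref{assumption_A_3}. Second, estimate $r_1$: by Proposition \ref{proposition_free_1}(1), $\norm{(-\Delta-z)^{-1}}_{L^{p,2}\to L^{q,2}}\le C|z|^{\frac n2(1/p-1/q-2/n)}$, which requires $(1/p,1/q)\in\Omega_0$ — true since $\Omega\subset\Omega_0$. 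Third, estimate $r_2,r_3,r_4$ (the off-diagonal free bounds involving $|x|^{-1}$ and $|x|V_\delta$): for $r_2=\norm{|x|^{-1}(-\Delta-z)^{-1}\varphi}_{L^2}\lesssim\norm{(-\Delta-z)^{-1}\varphi}_{L^{2n/(n-2),2}}\lesssim|z|^{\frac n2(1/p-(n-2)/(2n)-2/n)}\norm{\varphi}_{L^{p,2}}$, exactly as in \eqref{proof_Sobolev_4_2}, using $(1/p,(n-2)/(2n))\in\Omega_0$; the bounds involving $Y=|x|V_\delta$ are handled by Hölder $\norm{|x|V_\delta\,g}_{L^2}\le\norm{|x|V_\delta}_{L^{n,\infty}}\norm{g}_{L^{2n/(n-2),2}}$ followed by the same free estimate, and crucially $\norm{|x|V_\delta}_{L^{n,\infty}}$ is bounded independently of $\delta$ (it is part of the fixed data $V_\delta$, but one needs this uniformity — actually Assumption \ref{assumption_A} does not claim it; however \eqref{assumption_A_3} shows the quadratic form bound is controlled, and one re-derives the resolvent factorization bounds from $\delta$-independent quantities via the scaling structure — this is the point to be careful about, see below). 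Fourth, estimate $r_5=\norm{|x|^{-1}(H_\delta-\overline z)^{-1}|x|^{-1}}_{L^2\to L^2}\le C\delta^{-2}$ directly from Proposition \ref{proposition_Sobolev_1}(2). Fifth, assemble via \eqref{lemma_abstract_1_6}: the dominant term is $r_3r_4r_5$, contributing the factor $\delta^{-2}$, giving the claimed bound after density/duality. Sixth, the radial-mixed-norm estimate follows identically using \eqref{proposition_free_1_2} in place of \eqref{proposition_free_1_1}, with $\A=\B=L^{p,2}_rL^2_\omega$, together with $L^{p,2}_rL^2_\omega\subset L^{p,2}$ and $L^{p',2}\subset L^{p',2}_rL^2_\omega$ for $p<2$, exactly as in the proof of \eqref{proposition_Sobolev_4_2}.

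The main obstacle is getting the $\delta$-dependence right and, relatedly, making the off-diagonal terms $r_2,r_3,r_4$ harmless. The honest route is: the only $\delta$-degenerate factor should be $r_5\lesssim\delta^{-2}$ from \eqref{proposition_Sobolev_1_2}; the factors $r_2,r_3,r_4$ should be $\delta$-independent powers of $|z|$ times $\norm{|x|V_\delta}_{L^{n,\infty}}$ and a numerical constant. But $\norm{|x|V_\delta}_{L^{n,\infty}}$ need not be uniformly bounded in $\delta$ under Assumption \ref{assumption_A} alone. This is resolved by replacing the crude factorization $|x|V_\delta\cdot|x|^{-1}$ with a balanced one: write $V_\delta=(\sgn V_\delta)|V_\delta|^{1/2}|x|^{1/2}\cdot|x|^{-1/2}|V_\delta|^{1/2}$, i.e. $Z=|x|^{-1/2}|V_\delta|^{1/2}$ and $Y=(\sgn V_\delta)|x|^{1/2}|V_\delta|^{1/2}$; then all the mixed terms reduce to $\norm{|V_\delta|^{1/2}(-\Delta-z)^{-1}|V_\delta|^{1/2}}$-type and $\norm{|x|^{-1/2}|V_\delta|^{1/2}(\cdots)}$-type quantities, which by \eqref{assumption_A_3} and the subcritical lower bound $-\Delta+V_\delta\ge-\delta\Delta$ (equivalently $\norm{|V_\delta|^{1/2}(-\Delta)^{-1/2}}_{L^2\to L^2}\le$ something controlled) only cost powers of $\delta^{-1}$ that combine into the stated $\delta^{-2}$. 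A second, purely bookkeeping obstacle is confirming that the admissibility constraints for Proposition \ref{proposition_free_1} — namely $(1/p,1/q)\in\Omega_0$, $(1/p,(n-2)/(2n))\in\Omega_0$, $(1/q',(n-2)/(2n))\in\Omega_0$, and for the mixed-norm part $(1/p,1/p')\in\overline{CE}\setminus\{C,E\}$ — are all implied by $(1/p,1/q)\in\Omega$ (resp. $(1/p,1/p')\in\overline{CG}$); this is exactly the geometry already checked in the proof of Proposition \ref{proposition_Sobolev_4} and transfers verbatim, since $\Omega$ and the interior of $\overline{CG}$ sit inside $\Omega_0$ with room to spare. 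With these points handled, the argument is a direct transcription of the critical-case proof.
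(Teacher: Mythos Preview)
Your core plan --- apply Lemma \ref{lemma_abstract_1} with $T_0=-\Delta$, $T=H_\delta$, $Y=|x|V_\delta$, $Z=|x|^{-1}$, $W=\Id$, take $r_1$ from \eqref{proposition_free_1_1}, bound $r_2,r_3,r_4$ via H\"older plus \eqref{proposition_free_1_1} with target exponent $2n/(n-2)$, and take $r_5\le C\delta^{-2}$ from \eqref{proposition_Sobolev_1_2} --- is exactly the paper's proof, which is written in three lines.

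Your detour into a ``balanced'' factorization $Z=|x|^{-1/2}|V_\delta|^{1/2}$, $Y=(\sgn V_\delta)|x|^{1/2}|V_\delta|^{1/2}$ is unnecessary and does not actually work with the tools at hand: with that choice, the quantity $r_5$ in \eqref{lemma_abstract_1_5} becomes $\norm{|x|^{-1/2}|V_\delta|^{1/2}(H_\delta-\overline z)^{-1}|V_\delta|^{1/2}|x|^{-1/2}}_{L^2\to L^2}$, which Proposition \ref{proposition_Sobolev_1}(2) does \emph{not} supply, and which you cannot reduce to \eqref{proposition_Sobolev_1_2} without reintroducing $\norm{|x|V_\delta}_{L^{n,\infty}}$ anyway. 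The paper simply uses the straightforward factorization and absorbs $\norm{|x|V_\delta}_{L^{n,\infty}}$ into $C_{n,p,q}$; your observation that this makes the constant depend on the potential (through $\norm{|x|V_\delta}_{L^{n,\infty}}$, not only through $\delta$) is correct, and the paper's phrasing ``independent of $\delta$'' should be read as ``the only $\delta$-dependence beyond this is the explicit $\delta^{-2}$''. So drop the balanced factorization and keep your first plan.
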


Note that, in contrast to the critical case, the estimates for $p=2n/(n+2)$ hold.  

%proof
\begin{proof}%[Proof of Theorem \ref{theorem_Sobolev_6}]
%Theorem \ref{theorem_Sobolev_6} can be proved analogously to Proposition \ref{proposition_Sobolev_4} by using Proposition \ref{proposition_Sobolev_1} (2). 
%As above, we may assume $|z|=1$. Let $(1/p,1/q)\in \Omega$. 
Since $(\frac 1p,\frac{n-2}{2n}),(\frac{1}{q'},\frac{n-2}{2n})\in \Omega_0$ and $rV\in L^{n,\infty}$, Proposition \ref{proposition_free_1} (1) implies
 \begin{align*}
\norm{|x|V_\delta(-\Delta-z)^{-1}f}_{L^2}
&\le C|z|^{\frac n2\left(\frac1s-\frac{n-2}{2n}-\frac 2n\right)}\norm{rV_\delta}_{L^{n,\infty}}\norm{f}_{L^{s,2}},\ s\in \{p,q'\},\\ 
\norm{|x|^{-1}(-\Delta-z)^{-1}f}_{L^2}&\le C|z|^{\frac n2\left(1-\frac1q-\frac{n-2}{2n}-\frac 2n\right)}\norm{f}_{L^{q',2}}. 
\end{align*}
These three estimates, together with \eqref{proposition_free_1_1} and \eqref{proposition_Sobolev_1_2}, allow us to use Lemma \ref{lemma_abstract_1} with the choice $T_0=-\Delta$, $T=H_\delta$, $\A=L^{p,2}$, $\B=L^{q',2}$, $Y=|x|V_\delta$, $Z=|x|^{-1}$ and $W=\Id$ to conclude the first statement. Using \eqref{proposition_free_1_2} instead of \eqref{proposition_free_1_1}, we similarly obtain the second statement.% as in the proof of Proposition \ref{proposition_Sobolev_4}. 
\end{proof}
As in the critical case, this theorem implies

%corollary
\begin{corollary}
\label{corollary_Sobolev_8}
Let $n\ge3$, $\gamma>0$ and $1/p_\gamma=1/(n+2\gamma)+1/2$. Then
\begin{align*}
%\label{corollary_Sobolev_8_1}
\norm{(H_\delta-z)^{-1}}_{L^{p_\gamma,2}(\R^n)\to L^{p_\gamma',2}(\R^n)}
&\le C_{\gamma,n} \delta^{-\min(n+2\gamma,n+1)} 
K_\gamma(z)^{-\frac{1}{n/2+\gamma}},\\
%\label{corollary_Sobolev_8_2}
\norm{(H_\delta-z)^{-1}}_{L^{p_\gamma,2}_rL^2_\omega(\R^n)\to L^{p_\gamma',2}_rL^2_\omega(\R^n)}
&\le C_{\gamma,n,\ep}\delta^{-\min\left(n+2\gamma,\frac{n(n+1)}{n-1}+\ep\right )} 
K_{\gamma,\ep}^{\mathrm{rad}}(z)^{-\frac{1}{n/2+\gamma}}
%|z|^{-\min(\gamma,\frac{n}{n-1}+\ep)}d(z)^{-(\gamma-\frac{n}{n-1}+\ep)_+}
\end{align*}
for all $z\in \C\setminus[0,\infty)$, where $\ep=0$ if $n\ge4$ and $\ep>0$ if $n=3$. 
\end{corollary}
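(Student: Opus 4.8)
The plan is to follow the proof of Corollary~\ref{corollary_Sobolev_2} essentially verbatim, with Theorem~\ref{theorem_Sobolev_2} replaced by its subcritical counterpart Theorem~\ref{theorem_Sobolev_6}, while keeping track of the factors of $\delta$. First I would recall the two elementary equivalences used there: with $1/p_\gamma=1/(n+2\gamma)+1/2$, one has $0<\gamma\le1/2$ if and only if $(1/p_\gamma,1/p_\gamma')\in\overline{CD}\setminus\{C\}$, and $0<\gamma\le n/(n-1)$ if and only if $(1/p_\gamma,1/p_\gamma')\in\overline{CG}\setminus\{C\}$ (when $n=3$ the point $G=E$, i.e.\ $\gamma=3/2$, must be excluded). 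Since $\overline{CD}\subset\Omega$ and $\overline{CG}$ is precisely the diagonal segment covered by the radial estimate of Theorem~\ref{theorem_Sobolev_6}, these ranges of $\gamma$ are handled by a direct application of that theorem, while larger $\gamma$ are handled by interpolation against the trivial $L^2$ bound, exactly as in the critical case. I would also record that $\delta\le1$: testing \eqref{assumption_A_1} against $u=e^{iN\omega\cdot x}\phi$ with $\phi\in C_0^\infty$ fixed and $|\omega|=1$ and letting $N\to\infty$ forces $\delta\le1$, because $\int V_\delta|u|^2$ is independent of $N$ while $\int|\nabla u|^2\to\infty$; this is the only place where the size of $\delta$ enters qualitatively.

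For $0<\gamma\le1/2$ the first estimate of Theorem~\ref{theorem_Sobolev_6} with $q=p_\gamma'$ gives the first bound of the corollary: the exponent is $\frac n2\bigl(\frac1{p_\gamma}-\frac1{p_\gamma'}-\frac2n\bigr)=-\frac{2\gamma}{n+2\gamma}$ and $K_\gamma(z)=|z|^\gamma$ in this range, so $|z|^{-2\gamma/(n+2\gamma)}=K_\gamma(z)^{-1/(n/2+\gamma)}$; the constant is $C_{n,p_\gamma}\delta^{-2}$, and since $\delta\le1$ and $2\le n+2\gamma=\min(n+2\gamma,n+1)$ this is $\le C_{n,p_\gamma}\delta^{-\min(n+2\gamma,n+1)}$. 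The same computation with the radial estimate of Theorem~\ref{theorem_Sobolev_6} (available on $\overline{CG}$, resp.\ $\overline{CG}\setminus\{G\}$ when $n=3$) gives the second bound for $0<\gamma\le n/(n-1)$ (for $0<\gamma<3/2$ when $n=3$), again with $\delta^{-2}\le\delta^{-\min(n+2\gamma,\,n(n+1)/(n-1)+\ep)}$.

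For the remaining ranges I would interpolate, as in the proof of Corollary~\ref{corollary_Sobolev_2}, against $\norm{(H_\delta-z)^{-1}}_{L^2\to L^2}=d(z)^{-1}$, which carries no $\delta$. For $\gamma>1/2$, real interpolation (Theorems~\ref{theorem_interpolation_2} and~\ref{theorem_Lorentz_2}) between Theorem~\ref{theorem_Sobolev_6} at $p=p_{1/2}=2(n+1)/(n+3)$ and this $L^2$ bound, with $\theta=(2\gamma-1)/(n+2\gamma)$, produces the $z$-factor $|z|^{-1/(n+2\gamma)}d(z)^{-(2\gamma-1)/(n+2\gamma)}=K_\gamma(z)^{-1/(n/2+\gamma)}$ and the constant $C^{1-\theta}\delta^{-2(1-\theta)}=C'\delta^{-2(n+1)/(n+2\gamma)}$; since for $\gamma\ge1/2$ one has $\min(n+2\gamma,n+1)=n+1\ge2(n+1)/(n+2\gamma)$ and $\delta\le1$, the first bound follows. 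For the radial bound, if $\gamma>n/(n-1)$ and $n\ge4$ one interpolates the radial estimate of Theorem~\ref{theorem_Sobolev_6} at $p=p_{n/(n-1)}=n(n+1)/(n^2+3n-2)$ against $d(z)^{-1}$, and if $n=3$, $\gamma\ge3/2$ one interpolates at $p=p_{3/2-\ep}$ instead; in each case the $z$-exponents come out exactly as in the proof of Corollary~\ref{corollary_Sobolev_2}, while the power of $\delta^{-1}$ is at most $2$, hence $\le\min(n+2\gamma,\,n(n+1)/(n-1)+\ep)$ once $\delta\le1$ is used.

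There is no genuinely new analytic difficulty here: all of the harmonic analysis is contained in Theorem~\ref{theorem_Sobolev_6}, and the only point deserving care is the propagation of the $\delta$-dependence through the interpolation. The key observation is that the second interpolation endpoint, the elementary $L^2\to L^2$ estimate, is $\delta$-free, so interpolation can only lower the exponent of $\delta^{-1}$ below the value $2$ furnished by Theorem~\ref{theorem_Sobolev_6}; combined with the a priori bound $\delta\le1$, this suffices to reach the (admittedly non-optimal but uniform) exponents stated in the corollary.
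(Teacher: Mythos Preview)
Your proposal is correct and follows exactly the approach the paper intends (the paper's own proof is just the one-line remark ``As in the critical case, this theorem implies''): repeat the interpolation argument of Corollary~\ref{corollary_Sobolev_2} with Theorem~\ref{theorem_Sobolev_6} in place of Theorem~\ref{theorem_Sobolev_2} and track the $\delta^{-2}$. Your observation that Assumption~\ref{assumption_A} forces $\delta\le1$ (via oscillatory test functions) is a genuine point the paper leaves implicit, and it is indeed needed to pass from the exponent $2(1-\theta)\le2$ that interpolation actually produces to the larger exponents $\min(n+2\gamma,n+1)$ and $\min\bigl(n+2\gamma,\frac{n(n+1)}{n-1}+\ep\bigr)$ stated in the corollary.
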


\section{Proof of Theorems \ref{theorem_1} and \ref{theorem_2}}
\label{section_proof}

Here we prove the main theorems. The proof is based on the Birman-Schwinger principle and the following estimates for the Birman-Schwinger kernel $V_1(H_\delta-z)^{-1}V_2$, where $V_1=|V|^\frac12$ and $|V_2|=V_1\sgn V$ so that $V=V_1V_2$. Here $\sgn V$ is the complex sign function of $V$. 

%theorem
\begin{corollary}	
\label{corollary_3}
Let $n\ge3$, $\gamma>0$, $z\in \C\setminus[0,\infty)$, $\ep=0$ if $n\ge4$ and $\ep>0$ if $n=3$. Then there exist $\wtilde C_{\gamma,n},\wtilde C_{\gamma,n,\ep}>0$ (independent of $V$ and $z$) such that the following statements are satisfied:\\
{\rm (1)} The critical case:  Let $H_0=-\Delta-C_{\Hardy}|x|^{-2}$. Then
\begin{align}
\label{corollary_3_1}
\norm{V_1(H_0-z)^{-1}V_2}_{L^2(\R^n)\to L^2(\R^n)}
&\le \wtilde C_{\gamma,n}
K_\gamma(z)^{-\frac{1}{n/2+\gamma}}
%|z|^{-\min(\gamma,1/2)}d(z)^{-(\gamma-1/2)_+}
\norm{V}_{L^{\frac n2+\gamma,\infty}(\R^n)},\\
\label{corollary_3_2}
\norm{V_1(H_0-z)^{-1}V_2}_{L^2(\R^n) \to L^2(\R^n)}
&\le \wtilde C_{\gamma,n,\ep}
K_{\gamma,\ep}^{\mathrm{rad}}(z)^{-\frac{1}{n/2+\gamma}}
%|z|^{-\min(\gamma,\frac{n}{n-1}+\ep)}d(z)^{-(\gamma-\frac{n}{n-1}+\ep)_+}
\norm{V}_{L^{\frac n2+\gamma,\infty}_rL^\infty_\omega(\R^n)}.
\end{align}
{\rm (2)} The subcritical case: Let $\delta>0$ and $H_\delta=-\Delta+V_\delta$. Then  \eqref{corollary_3_1} and \eqref{corollary_3_2} hold with $H_0$, $\wtilde C_{\gamma,n}$ and $\wtilde C_{\gamma,n,\ep}$ replaced by $H_\delta$, $\wtilde C_{\gamma,n}\delta^{-\min(n+2\gamma,n+1)}$ and $\wtilde C_{\gamma,n,\ep}\delta^{-\min\left(n+2\gamma,\frac{n(n+1)}{n-1}+\ep\right)}$, respectively. 
\end{corollary}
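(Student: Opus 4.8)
The plan is to read off Corollary~\ref{corollary_3} from the uniform Sobolev inequalities already established, by the usual factorization-and-Hölder argument. Write $V=V_1V_2$ with $V_1=|V|^{1/2}$ and $V_2=|V|^{1/2}\sgn V$, so that $|V_1|=|V_2|=|V|^{1/2}$ pointwise. Since squaring a nonnegative function halves the Lorentz exponent, one has
\begin{align*}
\norm{V_1}_{L^{n+2\gamma,\infty}(\R^n)}^2=\norm{V_2}_{L^{n+2\gamma,\infty}(\R^n)}^2=\norm{V}_{L^{n/2+\gamma,\infty}(\R^n)},
\end{align*}
and likewise $\norm{V_1}^2_{L^{n+2\gamma,\infty}_rL^\infty_\omega(\R^n)}=\norm{V}_{L^{n/2+\gamma,\infty}_rL^\infty_\omega(\R^n)}$ for the angular-mixed norm. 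For $V$ in the stated classes all factors occurring below lie in the indicated Lorentz spaces, so the composition argument applies directly.

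The key observation is that the exponent $p_\gamma$ defined by $1/p_\gamma=1/(n+2\gamma)+1/2$ is chosen precisely so that
\begin{align*}
\frac{1}{p_\gamma}=\frac12+\frac{1}{n+2\gamma},\qquad \frac{1}{p_\gamma'}=\frac12-\frac{1}{n+2\gamma},
\end{align*}
with $p_\gamma\in(1,2)$ and $p_\gamma'\in(2,\infty)$ for every $n\ge3$ and $\gamma>0$. Hölder's inequality in Lorentz spaces (Appendix~\ref{appendix_interpolation}) then gives that multiplication by $V_2$ is bounded $L^2=L^{2,2}\to L^{p_\gamma,2}$ with norm $\le C\norm{V_2}_{L^{n+2\gamma,\infty}}$ and multiplication by $V_1$ is bounded $L^{p_\gamma',2}\to L^{2,2}=L^2$ with norm $\le C\norm{V_1}_{L^{n+2\gamma,\infty}}$; the second Lorentz index $2$ is preserved because in each product the potential factor sits in the weak space $L^{n+2\gamma,\infty}$. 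Composing these with the resolvent bound $(H_0-z)^{-1}\colon L^{p_\gamma,2}\to L^{p_\gamma',2}$ from Corollary~\ref{corollary_Sobolev_2} and multiplying the three operator norms yields \eqref{corollary_3_1}. For \eqref{corollary_3_2} I would run the identical argument with $L^{p_\gamma,2}_rL^2_\omega$ and $L^{p_\gamma',2}_rL^2_\omega$ in place of the scalar Lorentz spaces: the Hölder step is performed first on each sphere $\{|x|=r\}$ in $L^2_\omega$ against $\norm{V_j(r\,\cdot)}_{L^\infty_\omega}$, then radially in the $L^{p,2}((0,\infty),r^{n-1}dr)$ scale, so multiplication by $V_2$ maps $L^2\to L^{p_\gamma,2}_rL^2_\omega$ and by $V_1$ maps $L^{p_\gamma',2}_rL^2_\omega\to L^2$, with constants controlled by the mixed norms above; inserting Corollary~\ref{corollary_Sobolev_2}(2) gives \eqref{corollary_3_2}. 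The subcritical statement~(2) is proved word for word the same way, replacing Corollary~\ref{corollary_Sobolev_2} by Corollary~\ref{corollary_Sobolev_8}, which is exactly what produces the extra factors $\delta^{-\min(n+2\gamma,n+1)}$ and $\delta^{-\min(n+2\gamma,\,n(n+1)/(n-1)+\ep)}$.

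The conceptual content is entirely contained in the uniform Sobolev inequalities of Corollaries~\ref{corollary_Sobolev_2} and~\ref{corollary_Sobolev_8}, so there is no genuine obstacle in the present proof. The only points that require care are the bookkeeping of the second Lorentz index --- which is what forces one to keep the potential factors in the weak spaces $L^{n+2\gamma,\infty}$ (resp.\ $L^{n+2\gamma,\infty}_rL^\infty_\omega$) and to apply Hölder into an $L^{\cdot,2}$ target rather than a weak one, so that the target and source of the resolvent match exactly --- and, for the radial bound, checking that Hölder in the vector-valued Lorentz scale $L^{p,2}_rL^q_\omega$ factors as the pointwise-in-$r$ Hölder on $\mathbb S^{n-1}$ followed by the one-dimensional radial Hölder, so that the mixed norm of $V_j$ controls the product exactly as in the scalar case. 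Once these are in place, the three claimed inequalities follow by multiplying the relevant operator norms.
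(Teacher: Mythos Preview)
Your proof is correct and follows exactly the same approach as the paper's own proof, which is simply to combine Corollaries~\ref{corollary_Sobolev_2} and~\ref{corollary_Sobolev_8} with H\"older's inequality in Lorentz spaces, using that $V_1,V_2\in L^{n+2\gamma,\infty}$ and $1/p_\gamma=1/(n+2\gamma)+1/2$. You have merely written out in full detail (including the mixed-norm H\"older step) what the paper dispatches in a single sentence.
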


%proof
\begin{proof}
Taking the facts that $1/p_\gamma=1/(n+2\gamma)+1/2$ and $V_1,V_2\in L^{n+2\gamma,\infty}$ into account, the assertion clearly follows from Corollaries \ref{corollary_Sobolev_2} and \ref{corollary_Sobolev_8} and H\"older's inequality. %The proof of the second statement is analogous. 
%\begin{align*}\norm{wu}_{L^2(\R^n)}&\le C\norm{w}_{L^{n+2\gamma,\infty}(\R^n)}\norm{u}_{L^{p_\gamma,2}(\R^n)},\\\norm{wu}_{L^{p'_\gamma,2}(\R^n)}&\le C\norm{w}_{L^{n+2\gamma,\infty}(\R^n)}\norm{u}_{L^2(\R^n)},\\\norm{wu}_{L^2(\R^n)}&\le C\norm{w}_{L^{n+2\gamma,\infty}_rL^\infty_\omega(\R^n)}\norm{u}_{L^{p_\gamma,2}_rL^2_\omega(\R^n)},\\\norm{wu}_{L^{p'_\gamma,2}_rL^2_\omega(\R^n)}&\le C\norm{w}_{L^{n+2\gamma,\infty}_rL^\infty_\omega(\R^n)}\norm{u}_{L^2(\R^n)}. \end{align*}Therefore, Corollaries \ref{corollary_Sobolev_2} and \ref{corollary_Sobolev_8} imply the assertion. 
\end{proof}

%proof
\begin{proof}[Proof of Theorems \ref{theorem_1} and \ref{theorem_2}]
We shall show \eqref{theorem_1_1} only, proofs of other cases being analogous. Since both  $V_1(H_0+1)^{-\frac12}$ and $V_2(H_0+1)^{-\frac12}$ are compact, the Birman-Schwinger principle (see \cite[Section 4]{Fra3}) asserts that $E\in \C\setminus[0,\infty)$ is an eigenvalue of $H_0+V$ (which means $E\in \sigma_{\mathrm{d}}(H_0+V)$ in the present case) if and only if $-1$ is an eigenvalue of $V_1(H_0-E)^{-1}V_2$. In this case we have  $\norm{V_1(H_0-E)^{-1}V_2}_{L^2\to L^2}\ge1$. Therefore \eqref{theorem_1_1} follows from \eqref{corollary_3_1}. 
\end{proof}

%appendix
\appendix

\section{$m$-Sectorial operators and basic spectral properties}
\label{appendix_form_compact}
Here we provides the precise definition of operators $H_0+V$ and $H_\delta+V$ and their basic spectral properties in an abstract setting. As references we mention monographs \cite{Dav,Kat}.  

Let $\H$ be a complex Hilbert space with inner product $\<\cdot,\cdot\>$ and norm $\norm{\cdot}$. We first recall the notion of sectorial forms  and $m$-sectorial operators: 

%{definition}
\begin{definition}[Sectorial form]
\label{sectorial_form}
A quadratic form $(q,D(q))$ on $\H$ is said to be sectorial if there exist $c\in \R$ and $\theta\in[0,\pi/2)$ such that its numerical range $\mathrm{Num}(q):=\{q(u)\ |\ u\in D(q),\ \norm{u}=1\}$ is contained in a sector $\Gamma_{c,\theta}:=\{z\in \C\ |\ |\arg(z-c)|\le\theta\}$. 
\end{definition}

\begin{definition}[$m$-Sectorial operator]
\label{sectorial_operator}
A closed linear operator $(T,D(T))$ on $\H$ is said to be $m$-sectorial if its numerical range $\mathrm{Num}(T):=\{\<Tu,u\>\ |\ u\in D(T),\ \norm{u}=1\}$  is contained in a sector $\Gamma_{c,\theta}$ for some $c\in \R$ and $\theta\in[0,\pi/2)$ and, for $z\in \C$ with $\Re z<c$, $T-z$ is invertible on $\H$ and $\norm{(T-z)^{-1}}_{\H\to \H}\le |\Re z|^{-1}$. 
\end{definition}

The following lemma provides the precise definition of operators $H_0+V$ and $H_\delta+V$:

%{lemma}
\begin{lemma}
\label{lemma_sectorial_1}
Let $(T_0,D(T_0))$ be a non-negative self-adjoint operator on $\H$ and $V_1,V_2$ densely defined closed operators on $\H$ such that $D(T_0^{1/2})\subset D(V_j)$ and $V_j(T_0+1)^{-1/2}$ are compact for $j=1,2$. Then the quadratic form $
q(u)=\<T_0u,u\>+\<V_1u,V_2u\>
$ with $D(q)=D(T_0^{1/2})$ 
generates an $m$-sectorial operator $T$ with form domain $D(q)$ such that its operator domain $D(T)$ 
%$$D(T)=\{u\in D(q)\ |\ \text{There exists $v_u\in \H$ such that $q(u,w)=\<v_u,w\>$ for any $w\in D(q)$}\}$$
is a dense linear subspace of $D(q)$. 
\end{lemma}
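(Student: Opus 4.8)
The plan is to verify the hypotheses of the classical representation theorem for sectorial forms (see \cite[Theorem VI.2.1]{Kat}) by showing that $q$ is densely defined, sectorial, and closed, and then to extract the stated additional properties. First I would check that $D(q)=D(T_0^{1/2})$ is dense in $\H$, which is automatic since $T_0$ is self-adjoint, and that the perturbation $\<V_1u,V_2u\>$ is well defined on $D(q)$: this uses exactly the assumption $D(T_0^{1/2})\subset D(V_j)$. The crucial quantitative input is relative form boundedness with relative bound zero. Using that $V_j(T_0+1)^{-1/2}$ is compact, hence in particular bounded, and that for any $\eta>0$ one can split $V_j(T_0+1)^{-1/2}=V_j(T_0+1)^{-1/2}\mathds 1_{\{T_0\le M\}}+V_j(T_0+1)^{-1/2}\mathds 1_{\{T_0> M\}}$ with the second piece small in norm for $M$ large (by compactness, or simply since $\norm{(T_0+1)^{-1/2}\mathds 1_{\{T_0>M\}}}\to0$), I would derive
\begin{align*}
\abs{\<V_1u,V_2u\>}\le \norm{V_1u}\,\norm{V_2u}\le (a\norm{T_0^{1/2}u}^2+b\norm{u}^2),
\end{align*}
and more importantly, for every $\ep>0$ a bound of the form $\abs{\<V_1u,V_2u\>}\le \ep\<T_0u,u\>+C_\ep\norm{u}^2$. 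This relative bound zero is what makes the form sectorial: its numerical range lies in a parabola-like region around $[0,\infty)$, hence in some sector $\Gamma_{c,\theta}$, and it guarantees closedness of $q$ on $D(T_0^{1/2})$ since the perturbation is small relative to the closed form $u\mapsto\<T_0u,u\>$ (see \cite[Theorem VI.1.33]{Kat}).

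Once $q$ is shown to be densely defined, sectorial and closed, the first representation theorem produces a unique $m$-sectorial operator $T$ with $D(T)\subset D(q)$, $\<Tu,v\>=q(u,v)$ for $u\in D(T)$, $v\in D(q)$, and $D(T)$ a core of $q$; in particular $D(T)$ is dense in $D(q)$ with respect to the form norm $\norm{u}_q^2=\Re q(u)+(1+c_-)\norm{u}^2$ (where $c_-=\max\{-c,0\}$), which is equivalent to $\norm{T_0^{1/2}u}^2+\norm{u}^2$ by the relative bound zero. This gives all the assertions: $T$ is $m$-sectorial, its form domain is $D(q)=D(T_0^{1/2})$, and $D(T)$ is a dense linear subspace of $D(q)$.

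The main obstacle I anticipate is establishing the relative form bound zero cleanly from compactness alone, since the two factors $V_1,V_2$ need not be comparable and one only controls $\norm{V_j u}$, not $V_j$ itself. The resolution is to write $\abs{\<V_1u,V_2u\>}\le\tfrac12(\norm{V_1u}^2+\norm{V_2u}^2)$ and treat each $\norm{V_ju}^2$ separately, using the compactness (equivalently, the vanishing at infinity of $(T_0+1)^{-1/2}$ cut off to high energies) to absorb an arbitrarily small multiple of $\<T_0u,u\>$. A secondary point worth remarking, though not strictly needed for the statement, is that compactness of $V_j(T_0+1)^{-1/2}$ also yields that $T$ is a relatively form-compact perturbation of $T_0$, which is why $\sigma_{\ess}(T)=\sigma_{\ess}(T_0)$; I would mention this only in passing since the lemma as stated concerns only the construction of $T$.
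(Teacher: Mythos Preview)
Your proposal is correct and follows essentially the same route as the paper: extract relative form bound zero from the compactness of $V_j(T_0+1)^{-1/2}$, conclude that $q$ is sectorial and closed on $D(T_0^{1/2})$, and then invoke Kato's representation theorem. The only cosmetic difference is the mechanism for the small high-energy piece---you use spectral cutoffs $\mathds 1_{\{T_0>M\}}$, whereas the paper uses a finite-rank approximation $V_j(T_0+1)^{-1/2}=B_j+R_j$ with $\norm{R_j}\le\ep$; both yield the bound $|\<V_1u,V_2u\>|\le \ep^2\norm{(T_0+1)^{1/2}u}^2+C_\ep\norm{u}^2$. One small caveat: your parenthetical ``or simply since $\norm{(T_0+1)^{-1/2}\mathds 1_{\{T_0>M\}}}\to0$'' is not by itself the right justification, because $V_j$ is unbounded and you cannot factor it out; what you actually need (and what your other alternative ``by compactness'' gives) is $\norm{V_j(T_0+1)^{-1/2}\mathds 1_{\{T_0>M\}}}\to0$, which follows since a compact operator composed with a strongly vanishing sequence of projections tends to zero in norm.
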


%proof
\begin{proof}
We follow \cite[Lemma B.1]{Fra3}. Since $V_j(T_0+1)^{-1/2}$ are compact, it is seen from  a standard approximation argument that, for any $\ep>0$, there exist finite rank operators $B_j$ and remainders $R_j$ such that $V_j(T_0+1)^{-1/2}=B_j+R_j$, $B_j(T_0+1)^{1/2}$ are bounded and $\norm{R_j}\le \ep$. Then we have 
\begin{align*}
|\<V_1u,V_2u\>|
&\le (\norm{B_j(T_0+1)^{1/2}u}+\norm{R_j(T_0+1)^{1/2}u})^2\\
&\le C_\ep\norm{u}^2+\ep^2\norm{(T_0+1)^{1/2}u}^2\\
&\le C_\ep\norm{u}^2+\ep^2\norm{T_0^{1/2}u}^2
\end{align*}
for all $u\in D(T_0^{1/2})$, which implies that $\Re q(u)$ is lower semi-bounded, \emph{i.e.}, $\Re q(u)\ge -c\norm{u}$ with some $c\ge0$. Moreover, if we equip $D(q)$ with a norm $\norm{u}_{+1}=(\Re q(u)+(c+1)\norm{u}^2)^{1/2}$ then $(D(q),\norm{\cdot}_{+1})$ becomes a Hilbert space and $q$ is bounded on $(D(q),\norm{\cdot}_{+1})$. Now it follows from \cite[Theorems 1.33 and 3.4 in Chapter IV]{Kat} that $q$ is sectorial and generates an $m$-sectorial operator $T$ such that $D(T)$ satisfies the above property. 
\end{proof}

Now we consider basic spectral properties of $T$. At first note that, for any linear closed operator $A$, if  $\C\setminus\overline{\Num(A)}$ is connected and contains at least one point in $\rho(A)$, then $\sigma(A)\subset \Num(A)$ (see, \emph{e.g.}, \cite[Lemma 9.3.14]{Dav}). In particular, we have $\sigma(T)\subset \Gamma_{c,\theta}$ since $\{z\ |\ \Re z<c\}\subset \rho(T)$. 
%For any self-adjoint operator, it is well known that its spectrum is a disjoint union of the essential spectrum and the discrete spectrum. Although this is not always the case for non-self-adjoint operators, we shall see that this is the case for the above $T$. 
%we recall the definition of $\sigma_{\mathrm{d}}(T)$ and $\sigma_{\ess}(T)$. Since $\rho(T)$ is open, for any isolated point $\lambda\in\sigma(T)$, one can define the algebraic multiplicity $m(\lambda)$ of $\lambda$ by $$m(\lambda)=-\frac{1}{2\pi i}\mathrm{Tr}\,\oint_\gamma(T-\lambda)^{-1}dz, $$where $\gamma$ is a counterclockwise circle centered at $\lambda$ with sufficiently small radius $\ep$ such that $\{z\ |\ |\lambda-z|\le\ep\}\cap \sigma(T)=\{\lambda\}$. %Note that $m$ is independent of the choice of $\gamma$ satisfying the above property. Then 
Recall that the discrete spectrum $\sigma_{\mathrm{d}}(T)$ and the essential spectrum $\sigma_{\mathrm{ess}}(T)$ are defined by
\begin{align*}
\sigma_{\mathrm{d}}(T)&=\{\lambda\in \sigma(T)\ |\ \text{$\lambda$ is isolated and algebraic multiplicity of $\lambda$ is finite}\},\\
\sigma_{\ess}(T)&=\{\lambda\in \sigma(T)\ |\ \text{$T-\lambda$ is not a Fredholm operator}\}, 
\end{align*}
where a closed operator $A$ is said to be Fredholm if $\Ran A$ is closed and  $\dim(\Ker A)<\infty$ and $\codim(\Ran A)<\infty$. Then we have 

%{lemma}
\begin{lemma}[{\cite[Lemma B.2]{Fra3}}]
\label{lemma_sectorial_2}
Under conditions in Lemma \ref{lemma_sectorial_1}, $\sigma_{\ess}(T)=\sigma_{\ess}(T_0)$. Furthermore, $\sigma(T)=\sigma_{\mathrm{d}}(T)\cup\sigma_{\mathrm{ess}}(T)$ and $\sigma_{\mathrm{d}}(T)\cap\sigma_{\mathrm{ess}}(T)=\emptyset$. 
\end{lemma}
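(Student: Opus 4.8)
The plan is to reduce the statement to two standard principles: stability of the Fredholm (essential) spectrum of a bounded operator under compact perturbations, and the analytic Fredholm theorem. In fact this is precisely \cite[Lemma~B.2]{Fra3}, so one may simply invoke that reference; here is how I would run the argument. Write $R_T(z)=(T-z)^{-1}$, $R_{T_0}(z)=(T_0-z)^{-1}$, and recall from Lemma~\ref{lemma_sectorial_1} and $m$-sectoriality that $\{\Re z<c\}\subset\rho(T)$, while $T_0\ge0$ gives $\C\setminus[0,\infty)\subset\rho(T_0)$. \emph{Step 1 (the resolvent difference is compact).} Fix $z_0$ with $\Re z_0<\min\{c,0\}$, so $z_0\in\rho(T)\cap\rho(T_0)$ and $\overline{z_0}\notin[0,\infty)$. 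Writing $T=T_0+V_1^\ast V_2$ in the form sense of Lemma~\ref{lemma_sectorial_1} puts us in the setting of the weak second resolvent identity established in the proof of Lemma~\ref{lemma_abstract_1}, which gives
$$
\langle(R_T(z_0)-R_{T_0}(z_0))\varphi,\psi\rangle=-\big\langle V_1R_T(z_0)\varphi,\;V_2R_{T_0}(\overline{z_0})\psi\big\rangle,\qquad \varphi,\psi\in\H,
$$
that is, $R_T(z_0)-R_{T_0}(z_0)=-\big(V_2R_{T_0}(\overline{z_0})\big)^{\ast}V_1R_T(z_0)$. Now $V_2R_{T_0}(\overline{z_0})=\big(V_2(T_0+1)^{-1/2}\big)\big((T_0+1)^{1/2}R_{T_0}(\overline{z_0})\big)$ is compact, being a compact operator (hypothesis) followed by a bounded function of the self-adjoint operator $T_0$, so its adjoint is compact; and $V_1R_T(z_0)=\big(V_1(T_0+1)^{-1/2}\big)\big((T_0+1)^{1/2}R_T(z_0)\big)$ is bounded, since $(T_0+1)^{1/2}R_T(z_0)$ is everywhere defined (as $\Ran R_T(z_0)=D(T)\subset D(T_0^{1/2})$) and closed, hence bounded by the closed graph theorem. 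Thus $R_T(z_0)-R_{T_0}(z_0)$ is compact.

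\emph{Step 2 ($\sigma_{\ess}(T)=\sigma_{\ess}(T_0)$).} The class of Fredholm operators is stable under bounded compact perturbations, so Step~1 yields $\sigma_{\ess}(R_T(z_0))=\sigma_{\ess}(R_{T_0}(z_0))$. To transfer this through the resolvent map, one checks by a direct computation that for $\lambda\neq z_0$,
$$
R_T(z_0)-(\lambda-z_0)^{-1}\Id=(\lambda-z_0)^{-1}(\lambda-T)R_T(z_0),
$$
and the bounded operator $(\lambda-T)R_T(z_0)=(\lambda-z_0)R_T(z_0)-\Id$ has kernel and range of the same dimension and codimension as those of $T-\lambda$; hence $\lambda\in\sigma_{\ess}(T)\iff(\lambda-z_0)^{-1}\in\sigma_{\ess}(R_T(z_0))$, and likewise for $T_0$, while $0$ lies in the essential spectrum of both resolvents because $T$ and $T_0$ are unbounded. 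Combining these identifications gives $\sigma_{\ess}(T)=\sigma_{\ess}(T_0)$.

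\emph{Step 3 (decomposition of $\sigma(T)$).} By Step~2, $\sigma_{\ess}(T)=\sigma_{\ess}(T_0)$ is a closed subset of $[0,\infty)$, so $\Phi:=\C\setminus\sigma_{\ess}(T)$ is open and connected and contains $\{\Re z<c\}\subset\rho(T)$. On $\Phi$, $z\mapsto T-z$ is an analytic family of Fredholm operators of index $0$ (the index is locally constant, hence constant on the connected set $\Phi$, and equals $0$ at any resolvent point); since $T-z$ is invertible on $\{\Re z<c\}$, the analytic Fredholm theorem forces $T-z$ to be boundedly invertible on $\Phi$ except on a discrete subset, each point of which is an isolated point of $\sigma(T)$ with finite-rank Riesz projection, i.e.\ an element of $\sigma_{\mathrm d}(T)$. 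Hence $\sigma(T)\cap\Phi=\sigma_{\mathrm d}(T)$, and since $\sigma_{\ess}(T)\subset\sigma(T)$ we obtain $\sigma(T)=\sigma_{\mathrm d}(T)\cup\sigma_{\ess}(T)$. Conversely, if $\lambda\in\sigma_{\mathrm d}(T)$ then, decomposing $\H$ along the finite-rank Riesz projection $P_\lambda$ (which commutes with $T$), $T-\lambda$ is the direct sum of a nilpotent operator on $\Ran P_\lambda$ and an invertible operator on $\Ker P_\lambda$, hence Fredholm of index $0$; so $\lambda\notin\sigma_{\ess}(T)$, giving $\sigma_{\mathrm d}(T)\cap\sigma_{\ess}(T)=\emptyset$.

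\emph{Main obstacle.} The delicate point is Step~1: since $V_1^\ast V_2$ is only a relatively \emph{form}-compact perturbation of $T_0$ (not relatively operator-bounded), compactness of the resolvent difference cannot be read off from a norm-convergent Neumann series, and one must route through the $(T_0+1)^{\pm1/2}$ factorizations and, in particular, justify the boundedness of $(T_0+1)^{1/2}R_T(z_0)$ by the closed graph theorem using the inclusion $D(T)\subset D(T_0^{1/2})$ supplied by Lemma~\ref{lemma_sectorial_1}. The analytic Fredholm input in Step~3 is routine, but it does rely on $\sigma_{\ess}(T)$ having connected complement, which holds precisely because Step~2 identifies it with $\sigma_{\ess}(T_0)\subset[0,\infty)$.
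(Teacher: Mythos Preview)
The paper does not supply its own proof of Lemma~\ref{lemma_sectorial_2}; it simply records the statement and cites \cite[Lemma~B.2]{Fra3}. Your proposal is therefore not competing with an argument in the paper but rather reconstructing the one being referenced, and your reconstruction is correct: compactness of the resolvent difference via the form resolvent identity and the factorization through $(T_0+1)^{-1/2}$, Weyl's theorem for the resolvents, and the analytic Fredholm alternative on the connected complement of $\sigma_{\ess}(T_0)\subset[0,\infty)$ is exactly the standard route. Two minor remarks: first, when you invoke the identity from the proof of Lemma~\ref{lemma_abstract_1}, note that there $T$ is assumed self-adjoint while here $T$ is only $m$-sectorial, so strictly speaking you should re-derive the one-line form identity $\langle T\varphi,\psi\rangle=\langle\varphi,T_0\psi\rangle+\langle V_1\varphi,V_2\psi\rangle$ directly from the representation theorem for sectorial forms rather than quote that lemma verbatim (your $Z=V_1$, $Y=V_2$ once you do so); second, in Step~3 the phrase ``$z\mapsto T-z$ is an analytic family of Fredholm operators'' is slightly informal for an unbounded $T$, but this is easily made precise by passing to the bounded family $z\mapsto I-(z-z_0)R_T(z_0)$ as you implicitly do in Step~2. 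Neither point affects the validity of the argument.
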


\section{Real interpolation and Lorentz spaces}
\label{appendix_interpolation}
Here a brief summery of real interpolation spaces and Lorentz spaces is given without proofs. One can find a much more detailed exposition in \cite{BeLo,Gra,Tri}. 

\subsection{Real interpolation}
%Note that one can always take $\wtilde \A$ to be a Banach space $\A_0+\A_1$. %defined by\begin{align*}\A_0+\A_1&=\{a_0+a_1\ |\ a_0\in \A_0,\ a_1\in \A_1\},\\\norm{a}_{\A_0+\A_1}:&=\inf\{\norm{a_0}_{\A_0}+\norm{a_1}_{\A_1}\ |\ a=a_0+a_1,\ a_0\in \A_0,\ a_1\in \A_1\}. \end{align*}
Given a Banach couple $(\A_0,\A_1)$ and $0<\theta<1$ and $1\le q\le \infty$, one can define a Banach space $\A_{\theta,q}=(\A_0,\A_1)_{\theta,q}$ by the so-called $K$-method, which satisfies following properties: 
%proposition
\begin{proposition}[{\cite[Theorem 3.4.1]{BeLo}}]
\label{proposition_interpolation_1}
Let $0<\theta<1$ and $1\le q\le\infty$. Then\\
{\rm(1)} $(\A_0,\A_0)_{\theta,q}=\A_0$  and $(\A_0,\A_1)_{\theta,q}=(\A_1,\A_0)_{1-\theta,q}$ with equivalent norms.\\
{\rm(2)} If $1\le q_1\le q_2\le \infty$ then $(\A_0,\A_1)_{\theta,1}\subset (\A_0,\A_1)_{\theta,q_1}\subset (\A_0,\A_1)_{\theta,q_2}\subset (\A_0,\A_1)_{\theta,\infty}$. 
\end{proposition}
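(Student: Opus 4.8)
**Proof proposal for Proposition \ref{proposition_interpolation_1} (the reiteration/embedding properties of the $K$-method).**

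The plan is to work directly from the definition of the $K$-functional and the norm on $(\A_0,\A_1)_{\theta,q}$, since both assertions are elementary consequences of its structure. Recall that for $a\in \A_0+\A_1$ and $t>0$ one sets
\[
K(t,a)=K(t,a;\A_0,\A_1):=\inf_{a=a_0+a_1}\big(\norm{a_0}_{\A_0}+t\norm{a_1}_{\A_1}\big),
\]
and that $\norm{a}_{(\A_0,\A_1)_{\theta,q}}=\big\| t^{-\theta}K(t,a)\big\|_{L^q((0,\infty),dt/t)}$. The first step, for part (1), is to compute $K(t,a;\A_0,\A_0)$: the optimal splitting balances $\norm{a_0}_{\A_0}+t\norm{a_1}_{\A_0}$ over $a=a_0+a_1$ in the single space $\A_0$, which gives $K(t,a;\A_0,\A_0)=\min(1,t)\norm{a}_{\A_0}$. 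Plugging this into the norm and noting that $\int_0^\infty t^{-\theta q}\min(1,t)^q\,\frac{dt}{t}$ is a finite positive constant depending only on $\theta$ and $q$ (split at $t=1$; near $0$ the integrand behaves like $t^{(1-\theta)q-1}$, near $\infty$ like $t^{-\theta q-1}$, both integrable since $0<\theta<1$) yields $(\A_0,\A_0)_{\theta,q}=\A_0$ with equivalent norms. For the symmetry statement, the key identity is $K(t,a;\A_0,\A_1)=t\,K(t^{-1},a;\A_1,\A_0)$, obtained directly by factoring $t$ out of the infimum; substituting $s=t^{-1}$ in the integral defining the norm (so $dt/t=ds/s$ and $t^{-\theta}\cdot t=t^{1-\theta}=s^{-(1-\theta)}$) converts the $(\A_0,\A_1)_{\theta,q}$ norm into the $(\A_1,\A_0)_{1-\theta,q}$ norm, giving exact equality of norms here.

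For part (2), the chain of embeddings is a consequence of two standard facts about the function $\varphi(t):=t^{-\theta}K(t,a)$. First, $t\mapsto K(t,a)$ is nondecreasing and $t\mapsto K(t,a)/t$ is nonincreasing (both immediate from the definition), which forces the pointwise bound $\varphi(t)\le C_\theta\,\norm{t^{-\theta}K(t,a)}_{L^q(dt/t)}$ uniformly in $t$; this gives $(\A_0,\A_1)_{\theta,q}\subset(\A_0,\A_1)_{\theta,\infty}$. Second, for $1\le q_1\le q_2\le\infty$ one has the interpolation-type inequality $\norm{\varphi}_{L^{q_2}(dt/t)}\le \norm{\varphi}_{L^{q_1}(dt/t)}^{q_1/q_2}\norm{\varphi}_{L^\infty}^{1-q_1/q_2}$, and combining this with the previous uniform bound $\norm{\varphi}_{L^\infty}\lesssim\norm{\varphi}_{L^{q_1}(dt/t)}$ produces $\norm{\varphi}_{L^{q_2}(dt/t)}\lesssim\norm{\varphi}_{L^{q_1}(dt/t)}$, i.e. the embedding $(\A_0,\A_1)_{\theta,q_1}\subset(\A_0,\A_1)_{\theta,q_2}$. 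The endpoint $q_1=1$ inclusion $(\A_0,\A_1)_{\theta,1}\subset(\A_0,\A_1)_{\theta,q_1}$ is just the special case $q_2=q_1$, $q_1=1$ of the same inequality.

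There is no serious obstacle here; the only point requiring a little care is verifying the convergence of the constant integral $\int_0^\infty t^{-\theta q}\min(1,t)^q\,\frac{dt}{t}$ in part (1), which is exactly where the hypothesis $0<\theta<1$ is used, and checking that the monotonicity properties of $K(t,\cdot)$ and $K(t,\cdot)/t$ are correctly oriented so that the pointwise domination $\varphi(t)\lesssim\norm{\varphi}_{L^{q_1}(dt/t)}$ genuinely holds. Since all of this is classical, I would in fact simply cite \cite[Theorem 3.4.1]{BeLo} and \cite[Chapter 3]{BeLo} rather than reproduce the computation, as the excerpt already does.
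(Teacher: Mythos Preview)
Your argument is correct and is essentially the standard proof found in the cited reference \cite[Theorem~3.4.1]{BeLo}; the paper itself does not give a proof of this proposition but simply records it as a quotation from Bergh--L\"ofstr\"om. Since you explicitly note at the end that you would in practice just cite \cite{BeLo}, your proposal and the paper's treatment coincide.
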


%theorem
\begin{theorem}[{\cite[Theorem 3.1.2]{BeLo}}]
\label{theorem_interpolation_2}
Let $(\A_0,\A_1)$ and $(\B_0,\B_1)$ be two Banach couples, $0<\theta<1$ and $1\le q\le\infty$. Suppose  that $T$ is a bounded linear operator from $(\A_0,\A_1)$ to $(\B_0,\B_1)$ in the sense  that $T:\A_j\to \B_j$ and
\begin{align*}
\norm{Tf}_{\B_j}&\le M_j\norm{f}_{\A_j},\quad f\in \A_j,\ j=0,1.
\end{align*}
Then $T$ is bounded from $\A_{\theta,q}$ to $\B_{\theta,q}$ and satisfies
$$
\norm{Tf}_{\B_{\theta,q}}\le M_0^{1-\theta}M_1^\theta\norm{f}_{\A_{\theta,q}},\quad f\in \A_{\theta,q}.
$$
In particular, $\norm{T}_{\A_{\theta,q}\to \B_{\theta,q}}$ is uniformly bounded in $q$. 
\end{theorem}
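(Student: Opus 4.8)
The plan is to use the concrete description of the real interpolation space $\A_{\theta,q}$ via Peetre's $K$-functional, and to observe that a bounded morphism of Banach couples acts on the $K$-functional almost like a dilation in the auxiliary variable $t$, with that dilation exactly compensated by the weight $t^{-\theta}$ appearing in the norm of $\A_{\theta,q}$.

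First I would recall that for a Banach couple $(\A_0,\A_1)$ one sets, for $f\in \A_0+\A_1$ and $t>0$,
\[
K(t,f;\A_0,\A_1):=\inf\,\bigl\{\norm{f_0}_{\A_0}+t\norm{f_1}_{\A_1}\ :\ f=f_0+f_1,\ f_j\in\A_j\bigr\},
\]
and that
\[
\norm{f}_{\A_{\theta,q}}=\Bigl(\int_0^\infty\bigl(t^{-\theta}K(t,f;\A_0,\A_1)\bigr)^q\,\frac{dt}{t}\Bigr)^{1/q},\qquad 1\le q<\infty,
\]
with the usual essential-supremum modification for $q=\infty$. Since, by hypothesis, $T$ is a single linear map defined compatibly on $\A_0$ and $\A_1$, hence on $\A_0+\A_1$, with values in $\B_0+\B_1$, the element $Tf$ is meaningful for every $f\in\A_{\theta,q}\subset\A_0+\A_1$, and once the bound below is established it will automatically lie in $\B_{\theta,q}$.

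The key step is the pointwise estimate
\[
K(t,Tf;\B_0,\B_1)\le M_0\,K\!\left(\tfrac{M_1}{M_0}t,\,f;\A_0,\A_1\right),\qquad t>0.
\]
Indeed, for any admissible decomposition $f=f_0+f_1$ with $f_j\in\A_j$, the splitting $Tf=Tf_0+Tf_1$ is admissible for $(\B_0,\B_1)$, so
\[
K(t,Tf;\B_0,\B_1)\le\norm{Tf_0}_{\B_0}+t\norm{Tf_1}_{\B_1}\le M_0\norm{f_0}_{\A_0}+tM_1\norm{f_1}_{\A_1}=M_0\!\left(\norm{f_0}_{\A_0}+\tfrac{M_1}{M_0}t\,\norm{f_1}_{\A_1}\right),
\]
and taking the infimum over all such decompositions yields the claim.

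Finally I would substitute this estimate into the norm of $\B_{\theta,q}$ and change variables via $s=\tfrac{M_1}{M_0}t$, which keeps $\tfrac{dt}{t}=\tfrac{ds}{s}$ and replaces $t^{-\theta}$ by $(\tfrac{M_1}{M_0})^{\theta}s^{-\theta}$; this produces
\[
\norm{Tf}_{\B_{\theta,q}}\le M_0\Bigl(\tfrac{M_1}{M_0}\Bigr)^{\theta}\norm{f}_{\A_{\theta,q}}=M_0^{1-\theta}M_1^{\theta}\norm{f}_{\A_{\theta,q}},
\]
the case $q=\infty$ being identical with integrals replaced by suprema. Since the constant $M_0^{1-\theta}M_1^{\theta}$ is independent of $q$, uniform boundedness in $q$ follows at once. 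I do not anticipate a genuine obstacle here: the only delicate points are the convention that makes $T$ well defined on the sum $\A_0+\A_1$ (needed so that $Tf\in\B_0+\B_1$, hence in $\B_{\theta,q}$ once its norm is finite) and the degenerate cases $M_0=0$ or $M_1=0$, which are disposed of by an elementary limiting argument in the parameter.
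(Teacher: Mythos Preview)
Your argument is correct and is exactly the standard $K$-functional proof of the real interpolation theorem. Note that the paper does not actually prove this statement: it is quoted without proof from \cite[Theorem~3.1.2]{BeLo} in Appendix~\ref{appendix_interpolation}, and the proof given there is precisely the one you have reproduced.
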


\subsection{Lorentz spaces}

Let $(X,\mu)$ be a $\sigma$-finite measure space and $\A$ a Banach space. We denote by $L^p\A=L^p(X, \mu; \A)$ the Bochner-Lebesgue space equipped with norm $\norm{f}_{L^p\A}:=\norm{\norm{f}_{\A}}_{L^p(X)}$. Given a strongly $\mu$-measurable function $f:X\to \A$, we let $\mu_f(\alpha)=\mu(\{x\ |\ \norm{f}_{\A}>\alpha\})$.
% denotes its distribution function. 
If we define the decreasing rearrangement of $f$ by $f^{*}(t) =\inf\{\alpha\ |\ \mu_f(\alpha)\le t\}$ then the Bochner-Lorentz space $L^{p,q}\A=L^{p,q}(X,\mu;\A)$ is the set of strongly $\mu$-measurable $f:X\to\A$ such that the quasi-norm 
$$
\norm{f}^*_{L^{p,q}\A}:=\norm{t^{1/p-1/q}f^*(t)}_{L^q(\R_+,dt)}=p^{1/q}\norm{\alpha \mu_f(\alpha)^{1/p}}_{L^q(\R_+,\alpha^{-1}d\alpha)}
$$
is finite. Although $\norm{f}^*_{L^{p,q}\A}$ is not a norm in general, if $1<p<\infty$ and $1\le q\le \infty$ (which are sufficient for our purpose), then there is a norm defined by 
$$
\norm{f}_{L^{p,q}\A}:=\norm{f^{**}}_{L^{p,q}\A}^*,\quad f^{**}(t):=\frac1t\int_0^tf^*(\alpha)d\alpha,
$$
which makes $L^{p,q}\A$ a Banach space. Furthermore, $\norm{\cdot}_{L^{p,q}\A}$ is equivalent to $\norm{\cdot}^*_{L^{p,q}\A}$ in the sense that $\norm{f}^*_{L^{p,q}\A}\le \norm{f}_{L^{p,q}\A}\le C(p,q)\norm{f}^*_{L^{p,q}\A}$ with some constant $C(p,q)>0$. Thus all continuity estimates for linear operators can be expressed in terms of $\norm{\cdot}_{L^{p,q}\A}^*$. $L^{p,q}\A$ is increasing in $q$: $L^{p,1}\A\subset L^{p,q_1}\A\subset L^{p,p}\A=L^p\A\subset L^{p,q_2}\A\subset L^{p,\infty}\A$ if $1<q_1<p<q_2<\infty$. Moreover, $L^{p,q}\A$ is characterized by the real interpolation of Bochner-Lebesgue spaces:

%theorem
\begin{theorem}	[{\cite[Theorem 2 in pages 134]{Tri}}]
\label{theorem_Lorentz_2}
For $0<\theta<1$, $1<p_1<p_2<\infty$ with $\frac1p=\frac{1-\theta}{p_1}+\frac{\theta}{p_2}$ and $1\le q\le \infty$, one has $
(L^{p_0}\A,L^{p_2}\A)_{\theta,q}=L^{p,q}\A
$ with equivalent norms.
\end{theorem}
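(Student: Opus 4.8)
The statement is the Bochner-valued counterpart of the classical scalar identity $(L^{p_1},L^{p_2})_{\theta,q}=L^{p,q}$ (a classical fact; see, e.g., \cite{BeLo}), and the plan is to deduce it from the scalar case with essentially no loss, by exploiting that every object appearing on both sides of the asserted equality depends on $f:X\to\A$ only through the scalar function $g:=\norm{f(\cdot)}_\A$. First I would record the elementary reductions: since $\mu_f(\alpha)=\mu(\{\norm{f}_\A>\alpha\})=\mu_g(\alpha)$, the decreasing rearrangements coincide, $f^*=g^*$, whence $\norm{f}_{L^r\A}=\norm{g}_{L^r(X)}$ for every $r$ and, in particular, $\norm{f}^*_{L^{p,q}\A}=\norm{g}^*_{L^{p,q}(X)}$.

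The heart of the argument is the identity $K(t,f;L^{p_1}\A,L^{p_2}\A)=K(t,g;L^{p_1}(X),L^{p_2}(X))$ for every $t>0$. For ``$\le$'', given any nonnegative decomposition $g=g_0+g_1$ with $g_j\in L^{p_j}(X)$, one sets $f_j(x):=f(x)g_j(x)/g(x)$ on $\{g>0\}$ and $f_j(x):=0$ elsewhere; these are strongly measurable, satisfy $f_0+f_1=f$ and $\norm{f_j(x)}_\A=g_j(x)$ pointwise, hence $\norm{f_j}_{L^{p_j}\A}=\norm{g_j}_{L^{p_j}}$, and taking the infimum over such decompositions (which, for a nonnegative scalar function, already computes the $K$-functional) gives the bound. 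For ``$\ge$'', any decomposition $f=f_0+f_1$ satisfies $g\le\norm{f_0(\cdot)}_\A+\norm{f_1(\cdot)}_\A$ pointwise, so $g_0:=\min(g,\norm{f_0(\cdot)}_\A)$ and $g_1:=g-g_0$ form a nonnegative decomposition of $g$ with $\norm{g_0}_{L^{p_1}}\le\norm{f_0}_{L^{p_1}\A}$ and $\norm{g_1}_{L^{p_2}}\le\norm{f_1}_{L^{p_2}\A}$.

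With this in hand, the $K$-method gives $\norm{f}_{(L^{p_1}\A,L^{p_2}\A)_{\theta,q}}=\norm{t^{-\theta}K(t,f)}_{L^q(\R_+,dt/t)}=\norm{t^{-\theta}K(t,g)}_{L^q(\R_+,dt/t)}=\norm{g}_{(L^{p_1}(X),L^{p_2}(X))_{\theta,q}}$. I would then invoke (or reproduce) the scalar identity: Holmstedt's formula
\[
K(t,g;L^{p_1},L^{p_2})\approx\Big(\int_0^{t^{a}}g^*(s)^{p_1}\,ds\Big)^{1/p_1}+t\Big(\int_{t^{a}}^{\infty}g^*(s)^{p_2}\,ds\Big)^{1/p_2},\qquad \frac1a=\frac1{p_1}-\frac1{p_2},
\]
combined with the change of variables $u=t^{a}$ (using $\theta/a=1/p_1-1/p$) and the two one-dimensional Hardy inequalities for the averaging operators $u\mapsto(\int_0^u\cdot)^{1/p_1}$ and $u\mapsto(\int_u^\infty\cdot)^{1/p_2}$, which apply precisely because $1/p$ lies strictly between $1/p_2$ and $1/p_1$; this yields $\norm{g}_{(L^{p_1},L^{p_2})_{\theta,q}}\approx\norm{u^{1/p}g^*(u)}_{L^q(\R_+,du/u)}=\norm{g}^*_{L^{p,q}(X)}$. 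Chaining with the reductions of the first paragraph and the equivalence $\norm{f}^*_{L^{p,q}\A}\approx\norm{f}_{L^{p,q}\A}$ (valid since $1<p<\infty$, $1\le q\le\infty$) gives the claimed equality of Banach spaces with equivalent norms; the hypothesis $1<p_1<p_2<\infty$ is exactly what makes the Lorentz quasi-norms normable and the Hardy inequalities available.

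I expect the only genuinely delicate point to be the $K$-functional identity: one must check that the pointwise ``rescaling'' decomposition is strongly measurable and that, for a nonnegative scalar function, the $K$-functional between $L^{p_1}$ and $L^{p_2}$ is attained over nonnegative decompositions, so that no cancellation is lost in passing from $f$ to $g=\norm{f(\cdot)}_\A$. Everything downstream is the classical scalar real-interpolation computation, which may simply be quoted.
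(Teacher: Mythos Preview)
Your argument is correct. The paper does not actually prove this theorem: Appendix~\ref{appendix_interpolation} is explicitly ``given without proofs'' and the statement is simply quoted from \cite[Theorem~2 in pages~134]{Tri}. Your reduction to the scalar case via the $K$-functional identity $K(t,f;L^{p_1}\A,L^{p_2}\A)=K(t,\norm{f(\cdot)}_\A;L^{p_1},L^{p_2})$ is the standard and cleanest route to the Bochner-valued statement, and the one point you rightly flag as delicate---that for a nonnegative scalar function the $K$-functional between $L^{p_1}$ and $L^{p_2}$ is already attained over nonnegative decompositions---is handled correctly by your truncation argument.
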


If $X$ is non-atomic, $\A=\C$ and $1<p,q<\infty$ then $L^{p,q}(X;\C)'=L^{p',q'}(X;\C)$, where $r'=r/(r-1)$ is the H\"older conjugate of $r$. Furthermore, when $\A$ is also a Lorentz space, a natural characterization of dual spaces holds:
%proposition
\begin{proposition}[{\cite[Proposition 6.3]{Fer}}]
\label{proposition_Lorentz_1}
Let $(X,\mu),(Y,\nu)$ be two non-atomic $\sigma$-finite measure spaces, $1<p_1,p_2<\infty$ and  $1\le q_1,q_2\le\infty$. Then $(L^{p_1,q_1}(X;L^{p_2,q_2}(Y)))^*=L^{p_1',q_1'}(X;L^{p_2',q_2'}(Y))$. 
\end{proposition}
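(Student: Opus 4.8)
The plan is to deduce the vector-valued Lorentz duality from three standard ingredients: the real interpolation description of Lorentz spaces (Theorem \ref{theorem_Lorentz_2}), the duality theorem for the real interpolation functor, and the Bochner $L^p$ duality theorem. Fix $\theta\in(0,1)$ and $1<a_0<a_1<\infty$ with $1/p_1=(1-\theta)/a_0+\theta/a_1$, and abbreviate $\A:=L^{p_2,q_2}(Y)$. By Theorem \ref{theorem_Lorentz_2} applied to the Banach couple $(L^{a_0}(X;\A),L^{a_1}(X;\A))$ one has, with equivalent norms,
$$
L^{p_1,q_1}(X;\A)=\big(L^{a_0}(X;\A),L^{a_1}(X;\A)\big)_{\theta,q_1}.
$$
Now I would invoke the duality theorem for the $K$-functor (Bergh--L\"ofstr\"om \cite{BeLo}): if $\Delta=L^{a_0}(X;\A)\cap L^{a_1}(X;\A)$ is dense in each endpoint space and $1\le q_1<\infty$, then the dual of the interpolation space is $\big((L^{a_0}(X;\A))^*,(L^{a_1}(X;\A))^*\big)_{\theta,q_1'}$. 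The density hypothesis holds because $\A$ is separable (it is a Lorentz space with finite second exponent over a $\sigma$-finite space), so $\A$-valued simple functions supported on sets of finite measure lie in $\Delta$ and are dense in every $L^{a_j}(X;\A)$.

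Next I would identify the endpoint duals. Since $1<p_2,q_2<\infty$, the Lorentz space $\A=L^{p_2,q_2}(Y)$ is reflexive; in particular both $\A$ and $\A^*$ have the Radon--Nikodym property, so the Bochner duality theorem gives $(L^{a_j}(X;\A))^*=L^{a_j'}(X;\A^*)$ isometrically, and by the scalar Lorentz duality recalled above (with $Y$ non-atomic) $\A^*=L^{p_2',q_2'}(Y)$ with equivalent norms. Feeding this back and using that $1/p_1'=(1-\theta)/a_0'+\theta/a_1'$, a second application of Theorem \ref{theorem_Lorentz_2} yields
$$
\big(L^{p_1,q_1}(X;\A)\big)^*=\big(L^{a_0'}(X;\A^*),L^{a_1'}(X;\A^*)\big)_{\theta,q_1'}=L^{p_1',q_1'}\big(X;L^{p_2',q_2'}(Y)\big),
$$
which is the asserted identity (with equivalent, not necessarily equal, norms --- sufficient for every application in this paper).

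I expect the genuinely vector-valued point --- that $(L^{a}(X;\A))^*=L^{a'}(X;\A^*)$ --- to be the main obstacle, since this fails for general Banach spaces $\A$ and here rests on the reflexivity (hence Radon--Nikodym property) of Lorentz spaces in the range $1<p_2,q_2<\infty$; the interpolation machinery is otherwise routine once the density hypothesis is checked. A secondary subtlety is the treatment of the endpoints $q_i\in\{1,\infty\}$: for $q_i=1$ one uses instead the interpolation duality $(\A_0,\A_1)_{\theta,1}^*=(\A_0^*,\A_1^*)_{\theta,\infty}$, while for $q_i=\infty$ the literal identity is false (the dual of $L^{p,\infty}$ properly contains $L^{p',1}$), so in that case the statement must be read as ``$L^{p_1',q_1'}(X;L^{p_2',q_2'}(Y))$ embeds isometrically into the dual as a norming subspace'', which is all that is used below. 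As an alternative route avoiding interpolation altogether, one can argue directly: boundedness of the natural pairing $\langle f,g\rangle=\int_X\int_Y fg\,d\nu\,d\mu$ on $L^{p_1,q_1}(X;\A)\times L^{p_1',q_1'}(X;\A^*)$ follows from H\"older's inequality for Lorentz spaces applied first on $Y$ and then on $X$, and surjectivity of $g\mapsto\langle\cdot,g\rangle$ is a Dunford--Pettis-type representation theorem for $\A^*$-valued functionals, again powered by the Radon--Nikodym property of $\A^*$.
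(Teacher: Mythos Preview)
The paper does not supply its own proof of this proposition: it is stated with a direct citation to \cite[Proposition~6.3]{Fer} and no argument is given. So there is nothing in the paper to compare your proposal against.

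That said, your interpolation-based sketch is a reasonable route to the result in the reflexive range $1<q_1,q_2<\infty$, and you have correctly isolated the two genuine technical points: the Radon--Nikodym property of $\A=L^{p_2,q_2}(Y)$ (needed for Bochner duality $(L^a(X;\A))^*=L^{a'}(X;\A^*)$) and the density hypothesis in the interpolation duality theorem. Your caveat about the endpoints $q_i\in\{1,\infty\}$ is also well taken; in particular the literal equality fails for $q_i=\infty$, and what one actually has (and what the paper uses) is the norming embedding you describe. If you want a self-contained argument covering the full stated range, the direct route you mention at the end --- H\"older on $Y$ then on $X$ for boundedness, plus a representation theorem for the functional --- is closer in spirit to Fernandez's original proof in \cite{Fer}, which proceeds by establishing the duality for mixed-norm Lorentz spaces directly rather than via interpolation.
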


Finally we record some properties used throughout this paper  for the case when $\A=\C$: 
\begin{itemize}
\item Scaling invariance: $\norm{f(\lambda x)}_{L^{p,q}}=\lambda^{-n/p}\norm{f}_{L^{p,q}}$ for $\lambda>0$ and $\norm{|f|^s}_{L^{p,q}}=\norm{f}_{L^{ps,qs}}^s$. 
\item H\"older's inequality: if $1\le p,q,p_j,q_j\le \infty$ such that $\frac1p=\frac{1}{p_1}+\frac{1}{p_2}$ and $\frac1q=\frac{1}{q_1}+\frac{1}{q_2}$ then
$$
\norm{fg}_{L^{p,q}(X)}\le C\norm{f}_{L^{p_1,q_1}}\norm{g}_{L^{p_2,q_2(X)}}.
$$
\item Sobolev's inequality (see \cite[Lemma 32.1]{Tar}): if $X=\R^n$ and $n\ge3$ then 
$$
\norm{f}_{L^{2n/(n-2),2}(\R^n)}\le C\norm{\nabla f}_{L^2(\R^n)}.
$$
Furthermore, if $0<s<n/2$ then $\H^s(\R^n)\,\hookr\,L^{2n/(n-s),2}(\R^n)$. 
\end{itemize}

\section{Relative form compactness of potentials}
\label{appendix_remark}
Here we show that if $V\in L^{p,\infty}_0(\R^n)+L^\infty_0(\R^n)$ with $p>n/2$ then both $|V|^{1/2}(1-\Delta)^{-1/2}$ and $|V|^{1/2}(H_0+1)^{-1/2}$ are compact. We begin with two basic lemmas. 
%{lemma}
\begin{lemma}
\label{lemma_form_compact_1}
Let $n\ge3$ and $s<1$. Then $\<D\>^s(H_0+1)^{-1/2}$ is bounded on $L^2(\R^n)$.
\end{lemma}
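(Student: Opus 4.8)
The plan is to make two soft reductions and then estimate the non-radial and radial parts of $H_0$ separately; only the radial part will require real work. For $s\le0$ the Fourier multiplier $\<D\>^s$ is bounded on $L^2$, so I may assume $0<s<1$. From $(1+\abs\xi^2)^s\le1+\abs\xi^{2s}$ (subadditivity of $t\mapsto t^s$) one gets $\norm{\<D\>^s u}_{L^2}\le\norm u_{L^2}+\norm{(-\Delta)^{s/2}u}_{L^2}$, so, taking $u=(H_0+1)^{-1/2}f$, it suffices to bound $\norm{(-\Delta)^{s/2}(H_0+1)^{-1/2}f}_{L^2}$ by $C\norm f_{L^2}$. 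Since $\sup_{\lambda\ge0}\lambda^{s/2}(\lambda+1)^{-1/2}\le1$ for $s\le1$, i.e.\ $\norm{H_0^{s/2}(H_0+1)^{-1/2}}_{L^2\to L^2}\le1$, this reduces to proving
\begin{equation*}
\norm{(-\Delta)^{s/2}v}_{L^2}\le C_s\norm{H_0^{s/2}v}_{L^2},\qquad v\in D(H_0^{s/2}),
\end{equation*}
equivalently, to the boundedness of $(-\Delta)^{s/2}H_0^{-s/2}$ on $L^2(\R^n)$. As $H_0$ commutes with the radial projection $P$ and with $P^\perp=\Id-P$, it is enough to treat $PL^2$ and $P^\perp L^2$ separately.

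\emph{Non-radial part.} On $P^\perp L^2$ the operator $H_0$ is comparable to $-\Delta$. Expanding $v=\sum_{\ell\ge1,\,m}r^{-(n-1)/2}v_{\ell m}(r)Y_{\ell m}(\omega)$ in spherical harmonics one computes
\begin{equation*}
q_0(v)=\sum_{\ell\ge1,\,m}\int_0^\infty\Big(\abs{v_{\ell m}'(r)}^2+\mu_\ell\,r^{-2}\abs{v_{\ell m}(r)}^2\Big)\,dr,\qquad\mu_\ell=\ell(\ell+n-2)-\tfrac14,
\end{equation*}
while $\norm{\nabla v}_{L^2}^2$ is the same sum with $\mu_\ell$ replaced by $\tilde\mu_\ell=\mu_\ell+C_{\Hardy}$. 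Since $\mu_\ell\ge\mu_1=n-\tfrac54>0$ for $\ell\ge1$, one has $\tilde\mu_\ell/\mu_\ell\le\tilde\mu_1/\mu_1=1+\tfrac{(n-2)^2}{4n-5}=:K_n$, hence $\norm{\nabla v}_{L^2}^2\le K_n\,q_0(v)$ for $v\in P^\perp D(q_0)$, i.e.\ $-\Delta\le K_nH_0$ as quadratic forms on $P^\perp L^2$. By the L\"owner--Heinz inequality ($t\mapsto t^s$ is operator monotone for $0\le s\le1$) this gives $(-\Delta)^s\le K_n^sH_0^s$ on $P^\perp L^2$, so $\norm{(-\Delta)^{s/2}v}\le K_n^{s/2}\norm{H_0^{s/2}v}$; thus the non-radial part is bounded, in fact for every $s\le1$.

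\emph{Radial part.} This is the crux, because the $\ell=0$ effective potential of $H_0$ is the critical one $-\tfrac14r^{-2}$ and the comparison $-\Delta\le CH_0$ then fails. Both $(-\Delta_{\R^n})^{s/2}$ and $H_0^{s/2}$ are homogeneous of degree $s$ under dilations, so $Q:=(-\Delta_{\R^n})^{s/2}(H_0|_{\mathrm{rad}})^{-s/2}$ is dilation invariant on $L^2_{\mathrm{rad}}(\R^n)\cong L^2(\R_+,r^{n-1}dr)$; via the radial Mellin transform it is unitarily equivalent to multiplication by a function $m_s(\tau)$ on $L^2(\R)$, so $\norm Q_{L^2\to L^2}=\norm{m_s}_{L^\infty(\R)}$. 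Using the diagonalization of $-\Delta_{\R^n}$ on radial functions by the Hankel transform of order $\tfrac{n-2}2$, together with the unitary equivalence $H_0|_{\mathrm{rad}}\cong-\Delta_{\R^2}|_{\mathrm{rad}}$ from Proposition \ref{proposition_Sobolev_5} (so $H_0|_{\mathrm{rad}}$ is diagonalized by the Hankel transform of order $0$), one obtains $m_s$ explicitly as a quotient of $\Gamma$-functions; by Stirling its leading powers cancel, so $m_s(\tau)=O(1)$ as $\abs\tau\to\infty$, and a short estimate shows $\norm{m_s}_{L^\infty(\R)}<\infty$ precisely for $0\le s<1$. The restriction $s<1$ can be seen concretely from the example $v=\chi(\abs x)\abs x^{-(n-2)/2}$, with $\chi$ a smooth cutoff equal to $1$ near $0$: one has $v\in D(H_0^{s/2})$ for every $s\ge0$, while $(-\Delta_{\R^n})^{s/2}v$ behaves like $c_{n,s}\abs x^{-(n-2)/2-s}$ near the origin with $c_{n,s}\ne0$, and this is square integrable near $0$ if and only if $s<1$; the endpoint $s=1$ is exactly the logarithmic zero-energy resonance of $-\Delta_{\R^2}$ recorded in Remark \ref{remark_2}.

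\emph{Main obstacle.} The radial estimate is the only genuine difficulty. Because Hardy's inequality \eqref{Hardy} is saturated at $C_{\Hardy}$, the naive operator bound $-\Delta\le CH_0$ fails on radial functions, so the Mellin-multiplier (equivalently, Bessel-function) computation above cannot be circumvented, and $s<1$ is essential there; all other steps are soft.
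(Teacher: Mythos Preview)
Your reductions are sound, and the non-radial part is clean: the spherical-harmonics computation is just the improved Hardy inequality (Lemma~\ref{lemma_Hardy}(1) in the paper) rephrased as the form bound $-\Delta\le K_nH_0$ on $P^\perp L^2$, after which L\"owner--Heinz does the rest. The radial strategy (dilation invariance of $(-\Delta)^{s/2}H_0^{-s/2}$, Mellin diagonalization, Hankel transforms of orders $(n-2)/2$ and $0$) is the right one, and it is true that the resulting symbol $m_s(\tau)$ is a ratio of $\Gamma$-functions which is bounded precisely for $s<1$. However, you have not actually written down $m_s$ or carried out the boundedness check; ``a short estimate shows'' is where the content lies, and you have stated the conclusion rather than proved it. This is a gap in execution, not in conception.

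By contrast, the paper's proof is a one-line citation: it invokes \cite[Theorem~1.2]{Fra1}, which gives directly the interpolation inequality
\[
\norm{(-\Delta)^{s/2}f}_{L^2}\le C_s\norm{H_0^{1/2}f}_{L^2}^s\norm{f}_{L^2}^{1-s},\qquad 0\le s<1,
\]
from which boundedness of $\<D\>^s(H_0+1)^{-1/2}$ follows immediately. What you are doing is essentially re-deriving (a variant of) that theorem from scratch; your non-radial step is complete, but your radial step is exactly the hard part of \cite{Fra1} and needs the explicit Mellin/Gamma-function computation to be a proof rather than a sketch. If you fill in the symbol and its bound, your argument becomes a self-contained alternative to the black-box citation.
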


%proof
\begin{proof}
Assume $0\le s<1$ without loss of generality. It follows from \cite[Theorem 1.2]{Fra1} that 
$$
\norm{(-\Delta)^{s/2}f}_{L^2}\le C_s\norm{H_0^{1/2}f}_{L^2}^s\norm{f}^{1-s}_{L^2}\le C'_s(\norm{H_0^{1/2}f}_{L^2}+\norm{f}_{L^2}),\quad f\in C_0^\infty(\R^n),
$$
which, together with the density of $C_0^\infty(\R^n)$ in $D(H_0^{1/2})$, implies the assertion. 
\end{proof}

%{lemma}
\begin{lemma}
\label{lemma_form_compact_2}
For any $f,g\in L^\infty_0(\R^n)$, $f(x)g(D)$ is compact on $L^2(\R^n)$. 
\end{lemma}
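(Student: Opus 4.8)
The plan is to reduce the compactness of $f(x)g(D)$ to the compactness of a limiting operator obtained by truncating $f$ and $g$ far out, and to exploit the classical fact that if $f,g$ decay at infinity (here: belong to $L^\infty_0$) then $f(x)g(D)$ is a norm-limit of Hilbert--Schmidt operators. First I would recall the standard Birman--Solomjak-type estimate: if $f,g\in L^2(\R^n)$, then $f(x)g(D)$ is Hilbert--Schmidt on $L^2(\R^n)$ with $\norm{f(x)g(D)}_{\mathrm{HS}}=(2\pi)^{-n/2}\norm{f}_{L^2}\norm{g}_{L^2}$, since its integral kernel is $(2\pi)^{-n/2}f(x)\check g(x-y)$. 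In particular such operators are compact.

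Next I would use the definition of $L^\infty_0(\R^n)$ (the closure in $L^\infty$ of the bounded functions supported in a ball, equivalently $f\in L^\infty$ with $\norm{\mathds 1_{\{|x|\ge R\}}f}_{L^\infty}\to 0$ as $R\to\infty$, together with the requirement that $f$ vanish at infinity in the appropriate sense used elsewhere in the paper). For $f,g\in L^\infty_0$ pick a cutoff $\chi\in C_0^\infty(\R^n)$ with $0\le\chi\le1$ and $\chi\equiv1$ near the origin, set $\chi_R(x)=\chi(x/R)$, and write
\begin{align*}
f(x)g(D)
&=\big(\chi_R f\big)(x)\big(\chi_R g\big)(D)
+\big((1-\chi_R)f\big)(x)g(D)\\
&\quad+\big(\chi_R f\big)(x)\big((1-\chi_R)g\big)(D).
\end{align*}
The first term on the right is Hilbert--Schmidt (hence compact) because $\chi_R f,\chi_R g\in L^2(\R^n)$: they are bounded with compact support. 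The second and third terms are bounded by $\norm{(1-\chi_R)f}_{L^\infty}\norm{g}_{L^\infty}$ and $\norm{f}_{L^\infty}\norm{(1-\chi_R)g}_{L^\infty}$ respectively, using $\norm{h(D)}_{L^2\to L^2}=\norm{h}_{L^\infty}$. By the defining property of $L^\infty_0$, both of these tend to $0$ as $R\to\infty$. Hence $f(x)g(D)$ is the operator-norm limit of the compact operators $(\chi_R f)(x)(\chi_R g)(D)$, and is therefore compact.

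The only genuinely delicate point is the precise meaning of $L^\infty_0(\R^n)$: one must know that elements of $L^\infty_0$ can be approximated in $L^\infty$ by compactly supported bounded functions, so that the error terms $(1-\chi_R)f$ and $(1-\chi_R)g$ are small in $L^\infty$. This is immediate from the definition recalled above, so no real obstacle remains; the rest is the standard Hilbert--Schmidt computation for $f(x)g(D)$ together with the elementary bound $\norm{g(D)}_{L^2\to L^2}=\norm{g}_{L^\infty}$.
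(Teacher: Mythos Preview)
Your proof is correct and follows essentially the same approach as the paper's: truncate $f$ and $g$ to compactly supported bounded functions so that the truncated operator is Hilbert--Schmidt, and use the $L^\infty_0$ hypothesis to show the remainder tends to zero in operator norm. The only cosmetic difference is that the paper uses sharp cutoffs $\mathds 1_{\{|x|\le R\}}$ and $\mathds 1_{\{|\xi|\le R\}}$ in place of your smooth $\chi_R$, which changes nothing of substance.
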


%proof
\begin{proof}
Define $f_R=f\mathds1_{\{|x|\le R\}}$ and $g_R=g\mathds1_{\{|\xi|\le R\}}$ for $R>0$. For each $R>0$, $f_R(x)g_R(D)$ is of Hilbert-Schmidt class (and thus compact) since $f_R(x)\check g_R(x-y)\in L^2(\R^{2n})$. Furthermore, since 
\begin{align*}
&\norm{f(x)g(D)-f_R(x)g_R(D)}_{L^2\to L^2}\\
&\le \norm{f}_{L^\infty}\norm{g\mathds1_{\{|\xi|> R\}}}_{L^\infty}+\norm{f\mathds1_{\{|x|> R\}}}_{L^\infty}\norm{g_R}_{L^\infty}\to0
\end{align*}
as $R\to0$ by the hypothesis $f,g\in L^\infty_0$, $f(x)g(D)$ is also compact. 
\end{proof}

%{lemma}
\begin{lemma}
\label{lemma_form_compact_3}
Let $n\ge3$ and $w\in L^{p,\infty}_0(\R^n)+L^\infty_0(\R^n)$ with $p>n$. Then, for any $s>n/p$, $w(x)\<D\>^{-s}$ is compact. In particular, both $w(x)(-\Delta+1)^{-1/2}$ and $w(x)(H_0+1)^{-1/2}$ are compact. 
\end{lemma}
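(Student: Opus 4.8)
The plan is to decompose $w = w_1 + w_\infty$ with $w_1 \in L^{p,\infty}_0(\R^n)$ and $w_\infty \in L^\infty_0(\R^n)$, and to treat the two pieces separately; since the sum of compact operators is compact it suffices to show that each of $w_1(x)\<D\>^{-s}$ and $w_\infty(x)\<D\>^{-s}$ is compact. The $L^\infty_0$ piece is the easy one: writing $w_\infty(x)\<D\>^{-s} = w_\infty(x) g(D)$ with $g(\xi) = \<\xi\>^{-s}$, we have $g \in L^\infty_0(\R^n)$ (it vanishes at infinity since $s>0$), so Lemma \ref{lemma_form_compact_2} applies directly and gives compactness.

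For the $L^{p,\infty}_0$ piece, first I would reduce to an operator-norm approximation. Set $w_1^R = w_1 \mathds 1_{\{|x|\ge R\}}$ and note that $\norm{w_1^R}_{L^{p,\infty}} \to 0$ as $R\to\infty$ by definition of $L^{p,\infty}_0$. For fixed $R$ the truncated weight $w_1 - w_1^R = w_1\mathds 1_{\{|x|\le R\}}$ lies in $L^{p,\infty}$ with compact support, hence in $L^{q}$ for every $q<p$; choosing $q$ with $n/p < n/q < s$, one has $\mathds 1_{\{|x|\le R\}}w_1 \in L^q$ and $\<D\>^{-s}$ maps $L^2$ into a Sobolev space which (by the Kato-Rellich / Sobolev embedding type argument, or directly by the standard fact that $f(x)g(D)$ with $f\in L^q$, $g\in L^q$, $q\ge 2$, is in the Schatten class $\mathfrak S_q$) makes $(\mathds 1_{\{|x|\le R\}}w_1)(x)\<D\>^{-s}$ a compact (indeed Hilbert-Schmidt-type) operator. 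On the other hand, by Hölder's inequality in Lorentz spaces together with the Sobolev embedding $\H^s \hookrightarrow L^{2n/(n-2s),2}$ (valid since $s > n/p > 0$ and we may assume $s<n/2$), one estimates
\begin{align*}
\norm{w_1^R(x)\<D\>^{-s}f}_{L^2}
\le C\norm{w_1^R}_{L^{p,\infty}}\norm{\<D\>^{-s}f}_{L^{2n/(n-2s),2}}
\le C\norm{w_1^R}_{L^{p,\infty}}\norm{f}_{L^2},
\end{align*}
where the exponents match because $1/2 = (n-2s)/(2n) + 1/p$ is exactly the condition $s = n/p$ — here I would instead take $s > n/p$ and absorb the small gap using $\<D\>^{-s} = \<D\>^{-(s-\sigma)}\<D\>^{-\sigma}$ with $\sigma = n/p$, the factor $\<D\>^{-(s-\sigma)}$ being bounded on $L^2$. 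Thus $\norm{w_1^R(x)\<D\>^{-s}}_{L^2\to L^2} \to 0$ as $R\to\infty$, and $w_1(x)\<D\>^{-s}$ is a norm-limit of compact operators, hence compact.

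Finally, for the two concluding assertions: $w(x)(-\Delta+1)^{-1/2} = \big(w(x)\<D\>^{-s}\big)\big(\<D\>^{s}(-\Delta+1)^{-1/2}\big)$, and the second factor is bounded on $L^2$ for any $s \le 1$ (it is Fourier multiplication by a bounded function); choosing $n/p < s \le 1$, which is possible precisely because $p>n$, gives compactness of $w(x)(-\Delta+1)^{-1/2}$ as a composition of a compact operator with a bounded one. For $w(x)(H_0+1)^{-1/2}$ we factor as $\big(w(x)\<D\>^{-s}\big)\big(\<D\>^{s}(H_0+1)^{-1/2}\big)$ and invoke Lemma \ref{lemma_form_compact_1}, which gives boundedness of $\<D\>^s(H_0+1)^{-1/2}$ for $s<1$; again pick $n/p < s < 1$. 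The main obstacle I anticipate is getting the Lorentz-Hölder bookkeeping exactly right at the borderline exponent $s = n/p$, which is why I would work with $s$ strictly larger and peel off the extra smoothing factor; everything else is a routine combination of the cited lemmas.
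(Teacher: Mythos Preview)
Your proof is correct and shares the same overall architecture as the paper's: establish the operator-norm bound $\norm{w(x)\<D\>^{-n/p}}_{L^2\to L^2}\le C\norm{w}_{L^{p,\infty}}$ via Lorentz--H\"older and the Sobolev embedding $\H^{n/p}\hookr L^{2n/(n-2n/p),2}$, then truncate to $\{|x|\le R\}$, show the tail vanishes in operator norm by the $L^{p,\infty}_0$ hypothesis, and prove compactness of the compactly supported piece. The final assertions about $(-\Delta+1)^{-1/2}$ and $(H_0+1)^{-1/2}$ are handled identically in both, by factoring through $\<D\>^{-s}$ with $n/p<s<1$ and invoking Lemma~\ref{lemma_form_compact_1}.

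The difference is in how compactness of the truncated piece $w_R(x)\<D\>^{-s}$, $w_R=w\mathds 1_{\{|x|\le R\}}$, is obtained. You use the embedding $L^{p,\infty}(B_R)\subset L^q(B_R)$ for $q<p$, pick $q\in(n/s,p)\cap[2,\infty)$ so that also $\<\cdot\>^{-s}\in L^q(\R^n)$, and apply the Kato--Seiler--Simon inequality to place the operator in a Schatten class. The paper instead inserts a cutoff $\chi\in C_0^\infty$ with $\chi\equiv1$ on $B_R$ and writes
\[
w_R\<D\>^{-s}=w_R\<D\>^{-n/p}\cdot\<D\>^{n/p}\chi\<D\>^{-n/p}\<x\>^{\ep}\cdot\<x\>^{-\ep}\<D\>^{n/p-s},
\]
handling the middle factor by the Calder\'on--Vaillancourt theorem (it is a zeroth-order $\Psi$DO) and the right factor by Lemma~\ref{lemma_form_compact_2}. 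Your route avoids pseudodifferential calculus at the price of importing the Schatten-class bound; the paper's route stays within the lemmas it has already proved but calls on Calder\'on--Vaillancourt. Both are standard; yours is arguably the more elementary of the two. (The slightly awkward passage where you first write the H\"older estimate with exponent $s$ and then correct to $\sigma=n/p$ should simply be rewritten with $\sigma$ from the start.)
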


%proof
\begin{proof}
We may assume $w\in L^{p,\infty}_0$ with $n<p<\infty$, the proof for the case $w\in L^\infty_0$ being similar. H\"older's inequality and Sobolev's inequality in Lorentz spaces yield 
\begin{align}
\label{lemma_form_compact_3_1}
\norm{w(x)\<D\>^{-n/p}}_{L^2\to L^2}\le C\norm{w}_{L^{p,\infty}}
\end{align}
Define $w_R:=\mathds 1_{\{|x|\le R\}}w$. As in Lemma \ref{lemma_form_compact_2}, it suffices to show  that $w_R\<D\>^{-s}$ is compact. Take $\chi\in C_0^\infty(\R^n)$ with $\chi\equiv 1$ for $|x|\le R$ and decompose $w_R\<D\>^{-s}$ as $$w_R\<D\>^{-s}=w_R\chi\<D\>^{-s}=w_R\<D\>^{-n/p}\cdot\<D\>^{n/p}\chi\<D\>^{-n/p}\<x\>^{\ep}\cdot\<x\>^{-\ep}\<D\>^{n/p-s}$$ with some $\ep>0$. Here $\<D\>^{n/p}\chi\<D\>^{-n/p}\<x\>^{\ep}$ is a pseudodifferential operator with a smooth symbol of order zero and hence bounded on $L^2$ by the Calderon-Vaillancourt theorem (see, {\it e.g.}, \cite[Chapter 2]{Mar}). Furthermore, $\<x\>^{-\ep}\<D\>^{n/p-s}$ is compact by Lemma \ref{lemma_form_compact_2}. These two facts and \eqref{lemma_form_compact_3_1} imply $w_R\<D\>^{-s}$ is compact. Finally, the compactness of $w(H_0+1)^{-1/2}$ follows from the compactness of $w\<D\>^{-s}$ and Lemma \ref{lemma_form_compact_1} by taking $n/p<s<1$. 
\end{proof}

%section
\section{Proof of Proposition \ref{proposition_Sobolev_1}}
\label{appendix_resolvent}
In what follows the notation $r=|x|$ as well as $\partial_r=r^{-1}x\cdot\nabla$ will be used frequently. 
The proof of Proposition \ref{proposition_Sobolev_1} is essentially same as that of \cite[Theorem 1.6]{BVZ} (see also \cite[Appendix B]{BoMi}). However, since only the subcritical case was considered in these papers, we give the details of the proof in the critical case. We set $V_0=-C_{\Hardy}|x|^{-2}$ for short. Let $f\in C_0^\infty(\R^n)$ be such that $Pf\equiv0$ and $\lambda+i\ep\in \C\setminus[0,\infty)$ with $\lambda,\ep\in\R$. Consider the Helmholtz equation
\begin{align}
\label{proof_D_1}
(H_0-\lambda- i\ep)u=f,
\end{align} 
which has a solution $u=(H_0-\lambda-i\ep)^{-1}f\in D(H)$. Note that $u$ also satisfies $Pu\equiv0$ since $(H_0-z)^{-1}$ and $P$ commute. We  consider the case $\ep\ge0$ only, the proof for the case $\ep<0$ being analogous. We prepare three key lemmas:

%{lemma}
\begin{lemma}
\label{lemma_Hardy}
{\rm (1)} An improved Hardy inequality: if $f\in \H^1(\R^n)$ and $Pf\equiv0$ then
$$
\frac{n^2}{4}\norm{r^{-1}f}_{L^2(\R^n)}^2\le \norm{\nabla f}_{L^2(\R^n)}^2. 
$$
{\rm (2)} A weighted Hardy inequality: if $f\in \H^1(\R^n)$ and $r^{1/2}\nabla f\in L^2(\R^n)$ then
$$
\frac{(n-1)^2}{4}\norm{r^{-1/2}f}_{L^2(\R^n)}^2\le \norm{r^{1/2}\nabla f}_{L^2(\R^n)}^2. 
$$
\end{lemma}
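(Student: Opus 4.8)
The plan is to derive both inequalities from the spherical harmonic decomposition of $f$, exploiting the condition $Pf\equiv 0$ in part (1) and a one-dimensional weighted Hardy inequality on the radial variable in part (2). First I would write $f(x)=\sum_{\ell\ge0}\sum_{m} f_{\ell m}(r)\, Y_{\ell m}(\omega)$, $r=|x|$, $\omega=x/|x|$, where $\{Y_{\ell m}\}$ is an orthonormal basis of spherical harmonics with $-\Delta_{\mathbb{S}^{n-1}}Y_{\ell m}=\ell(\ell+n-2)Y_{\ell m}$. In these coordinates $\norm{r^{-1}f}_{L^2}^2=\sum_{\ell,m}\int_0^\infty |f_{\ell m}(r)|^2 r^{n-3}\,dr$ and
\[
\norm{\nabla f}_{L^2}^2=\sum_{\ell,m}\int_0^\infty\Big(|f_{\ell m}'(r)|^2+\ell(\ell+n-2)\,r^{-2}|f_{\ell m}(r)|^2\Big)r^{n-1}\,dr .
\]
The hypothesis $Pf\equiv0$ means precisely that the $\ell=0$ component vanishes, so only $\ell\ge1$ terms appear. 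For $\ell\ge1$ one has $\ell(\ell+n-2)\ge n-1$, so the angular term contributes at least $(n-1)\int r^{n-3}|f_{\ell m}|^2\,dr$ to the gradient; combining this with the classical one-dimensional Hardy inequality $\frac{(n-2)^2}{4}\int_0^\infty |g(r)|^2 r^{n-3}\,dr\le \int_0^\infty |g'(r)|^2 r^{n-1}\,dr$ applied to $g=f_{\ell m}$ gives a lower bound $\big(\frac{(n-2)^2}{4}+(n-1)\big)\int r^{n-3}|f_{\ell m}|^2\,dr=\frac{n^2}{4}\int r^{n-3}|f_{\ell m}|^2\,dr$. Summing over $\ell\ge1,m$ yields part (1).

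For part (2), I would again reduce to the radial variable but now keep all $\ell\ge0$, since there is no orthogonality hypothesis to exploit. The claimed inequality with weights $r^{-1/2}$ and $r^{1/2}$ corresponds, after passing to spherical harmonics, to the weighted estimates $\frac{(n-1)^2}{4}\int_0^\infty |f_{\ell m}(r)|^2 r^{n-2}\,dr\le \int_0^\infty\big(|f_{\ell m}'(r)|^2+\ell(\ell+n-2)r^{-2}|f_{\ell m}(r)|^2\big) r^{n}\,dr$. Dropping the nonnegative angular term, it suffices to prove the scalar weighted Hardy inequality $\frac{(n-1)^2}{4}\int_0^\infty |g(r)|^2 r^{n-2}\,dr\le \int_0^\infty |g'(r)|^2 r^{n}\,dr$ for $g\in C_0^\infty((0,\infty))$ (and then pass to the closure). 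This is the standard one-dimensional Hardy inequality $\int_0^\infty |g'(r)|^2 r^{\alpha}\,dr\ge \big(\frac{\alpha-1}{2}\big)^2\int_0^\infty |g(r)|^2 r^{\alpha-2}\,dr$ with $\alpha=n$, giving the sharp constant $\big(\frac{n-1}{2}\big)^2$. The quickest route is the substitution $g(r)=r^{-(n-1)/2}h(r)$, expanding $|g'|^2 r^n$, integrating the cross term by parts (the boundary terms vanish for compactly supported $h$ in $(0,\infty)$, and for general admissible $f$ one checks the boundary contributions vanish using $r^{1/2}\nabla f\in L^2$ and $f\in \H^1$), and discarding the resulting nonnegative term $\int r\,|h'|^2\,dr$.

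The main obstacle I anticipate is not the algebra but the functional-analytic bookkeeping: justifying the spherical harmonic expansion and the term-by-term manipulations for functions merely in $\H^1(\R^n)$ (resp.\ with $r^{1/2}\nabla f\in L^2$) rather than in $C_0^\infty$, and in particular verifying that the boundary terms in the integration by parts vanish at $r=0$ and $r=\infty$ under exactly these integrability hypotheses. I would handle this by first proving both inequalities for $f\in C_0^\infty(\R^n)$ (with, in part (1), the extra condition $Pf\equiv0$, which is preserved under a suitable mollification/truncation that respects the radial average) and then extending to the stated classes by a density argument together with Fatou's lemma on the left-hand side.
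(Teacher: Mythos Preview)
Your argument is correct. The paper itself does not give a proof of this lemma at all: it simply cites \cite[Lemma 2.4]{EkFr} for part (1) and \cite[Proposition 8.1]{MSS} for part (2). Your spherical-harmonic decomposition together with the one-dimensional Hardy inequalities (with the key arithmetic $\tfrac{(n-2)^2}{4}+(n-1)=\tfrac{n^2}{4}$ in part (1) and the weighted version with exponent $\alpha=n$ yielding the constant $\big(\tfrac{n-1}{2}\big)^2$ in part (2)) is exactly the standard route to these inequalities, and is essentially what one finds when unwinding the cited references. Your remarks on density and boundary terms are appropriate; in particular, for part (1) one can preserve the condition $Pf\equiv0$ under approximation simply by applying $P^\perp$ to a $C_0^\infty$ approximant, since $P^\perp$ is bounded on $\H^1$ and on $L^2(r^{-2}dx)$.
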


%proof
\begin{proof}
We refer to  \cite[Lemma 2.4]{EkFr} for (1) and \cite[Proposition 8.1]{MSS} for (2) in which simple proofs can be found. 
\end{proof}

%remark
\begin{remark}
We learn by Lemma \ref{lemma_Hardy} (1) that $\norm{\nabla g}_{L^2}^2-C_{\Hardy}\norm{|x|^{-1}g}_{L^2}^2\sim \norm{\nabla g}_{L^2}^2$ if $g\in C_0^\infty(\R^n)$ and $Pg\equiv0$, which implies that any  $g\in D(q_0)$ satisfying $Pg\equiv0$ belongs to $\H^1$. In particular, if $f\in C_0^\infty(\R^n)$ and $Pf\equiv0$ then $u=(H_0-\lambda-i\ep)^{-1}f$ belongs to $\H^1$. This observation is useful to justify the computations in the proof of the next lemma. 
\end{remark}

%{lemma}
\begin{lemma}
\label{lemma_appendix_D_1}
The following five identities hold:
\begin{align}
\label{proof_D_2}
\int\Big(|\nabla u|^2+V_0|u|^2-\lambda |u|^2\Big)dx
&=\Re\int f\overline{u}dx,\\
\label{proof_D_3}
-\ep\int|u|^2dx
&=\Im \int f\overline{u}dx,\\
\label{proof_D_4}
\int \Big(r|\nabla u|^2-\lambda r|u|^2+rV_0|u|^2+\Re(\overline u\partial_ru)\Big)dx
&=\Re \int rf\overline udx,\\
\label{proof_D_5}
\int\Big(-\ep r|u|^2+\Im(\overline u\partial_ru)\Big)dx
&=\Im \int rf\overline udx,\\
\label{proof_D_6}
\int\Big(2|\nabla u|^2-(r\partial_rV_0)|u|^2-2\ep \Im(\overline ur\partial_ru)\Big)dx
&=\Re \int f(2r\partial_r \overline u+n\overline u)dx. 
\end{align}
Furthermore, we have $r^{1/2}\nabla u\in L^2$, $r^{1/2}u\in L^2$, $r^{1/2}u\in \H^1$. 
\end{lemma}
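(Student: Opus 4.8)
The plan is to obtain \eqref{proof_D_2}--\eqref{proof_D_6} by pairing the Helmholtz equation \eqref{proof_D_1} successively with the multipliers $\bar u$, $r\bar u$ and $2r\partial_r\bar u+n\bar u$, and to extract the weighted bounds $r^{1/2}\nabla u,r^{1/2}u\in L^2$ and $r^{1/2}u\in\H^1$ along the way (mainly from the second pairing). Two preliminary facts are needed. First, since $Pf\equiv0$ we have $Pu\equiv0$, so Lemma \ref{lemma_Hardy} (1) and the Remark following it give $u\in\H^1(\R^n)$ and $r^{-1}u\in L^2(\R^n)$; in particular $\<H_0u,u\>=\int(|\nabla u|^2+V_0|u|^2)\,dx$ is finite, and \eqref{proof_D_2}, \eqref{proof_D_3} are immediately the real and imaginary parts of $\<(H_0-\lambda-i\ep)u,u\>=\<f,u\>$. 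Second, since $\lambda+i\ep$ lies in the resolvent set and $f$ is compactly supported, $u$ and $\nabla u$ decay exponentially at infinity by interior elliptic regularity together with the decay of the resolvent kernel; consequently all the cut-off boundary terms introduced below vanish in the limit, while near $x=0$ the positive weights $r$ and $r^{1/2}$ are harmless because $u\in\H^1$.

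For \eqref{proof_D_4} and \eqref{proof_D_5} I would pair \eqref{proof_D_1} with $\chi(x/R)\,r\bar u$, where $\chi\in C_0^\infty(\R^n)$ equals $1$ near the origin, and let $R\to\infty$. The core computation is
\[
\Re\int(-\Delta u)\,\overline{ru}\,dx=\int r|\nabla u|^2\,dx+\Re\int\bar u\,\partial_r u\,dx,
\]
which follows from $\nabla(ru)=(\nabla r)u+r\nabla u$ and $\nabla r\cdot\nabla u=\partial_r u$; the $V_0$, $\lambda$ and $\ep$ contributions are handled exactly as in \eqref{proof_D_2}--\eqref{proof_D_3}, and taking real and imaginary parts then gives \eqref{proof_D_4} and \eqref{proof_D_5} respectively. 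To justify the limit and simultaneously obtain $r^{1/2}\nabla u\in L^2$, note that in the cut-off identity the combination $\int r|\nabla u|^2\,dx+\int rV_0|u|^2\,dx=\int r|\nabla u|^2\,dx-C_{\Hardy}\int r^{-1}|u|^2\,dx$ is, by the weighted Hardy inequality Lemma \ref{lemma_Hardy} (2) and the strict subcriticality $C_{\Hardy}=\tfrac{(n-2)^2}{4}<\tfrac{(n-1)^2}{4}$, bounded below by a positive multiple of $\int r|\nabla u|^2\,dx$; the remaining terms are bounded uniformly in $R$ via Cauchy--Schwarz, \eqref{proof_D_3} and the exponential decay of $u$ and $f$. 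Letting $R\to\infty$ yields $r^{1/2}\nabla u\in L^2$, hence $r^{-1/2}u\in L^2$ by Lemma \ref{lemma_Hardy} (2). Finally $r^{1/2}u\in L^2$ because $u\in L^2_{\loc}$ and $u$ decays, and $r^{1/2}u\in\H^1$ because $\nabla(r^{1/2}u)=\tfrac12 r^{-1/2}(\nabla r)u+r^{1/2}\nabla u\in L^2$.

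Identity \eqref{proof_D_6} is the virial (Rellich--Pohozaev) identity; here I would pair \eqref{proof_D_1} with $\chi(x/R)(2r\partial_r\bar u+n\bar u)$ and let $R\to\infty$. Using $r\partial_r=x\cdot\nabla$, the standard dilation computation gives $\Re\int(-\Delta u)(2r\partial_r\bar u+n\bar u)\,dx=2\int|\nabla u|^2\,dx$; integration by parts in $r$ gives $\Re\int V_0u(2r\partial_r\bar u+n\bar u)\,dx=-\int(r\partial_rV_0)|u|^2\,dx$ (note $r\partial_rV_0=-2V_0$); the $\lambda$ term vanishes because $\int r\partial_r|u|^2\,dx=-n\int|u|^2\,dx$; and the $-i\ep$ term contributes $-2\ep\Im\int\bar u\,r\partial_r u\,dx$, which is finite by the bounds established in the previous paragraph. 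Collecting these terms yields \eqref{proof_D_6}. I expect the main obstacle throughout to be the rigorous justification of these integrations by parts against the unbounded weights $r$ and $r\partial_r$: this is exactly where one needs simultaneously the exponential decay of $u$ (to kill the cut-off boundary terms as $R\to\infty$), the subcriticality $C_{\Hardy}<\tfrac{(n-1)^2}{4}$ together with Lemma \ref{lemma_Hardy} (2) (to guarantee the limiting weighted integrals are finite, so that one may actually pass to the limit), and $Pu\equiv0$ with Lemma \ref{lemma_Hardy} (1) (to control the behaviour at the origin).
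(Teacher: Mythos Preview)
Your approach is essentially the same as the paper's: pair \eqref{proof_D_1} with the multipliers $\bar u$, $r\bar u$, and $2r\partial_r\bar u+n\bar u$ (the paper writes the last as $iA\bar u$ with $iA=r\partial_r+n/2$), then take real and imaginary parts. The only notable difference is in how the weighted bounds are extracted: the paper reads off $r^{1/2}u\in L^2$ directly from \eqref{proof_D_5} (using $\ep>0$, or from \eqref{proof_D_4} when $\lambda<0$) and then $r^{1/2}\nabla u\in L^2$ from \eqref{proof_D_4}, without invoking exponential decay or the coercivity argument via Lemma~\ref{lemma_Hardy}~(2); your route via exponential decay and weighted-Hardy coercivity is also valid but a bit heavier than necessary.
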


%proof
\begin{proof}We only outline the proof and refer to \cite[Appendix B]{BoMi} for more details. 
Note that conditions $rV_0\in L^{n,\infty}$ and $r\partial_r V_0\in L^{n/2,\infty}$ are enough to justify following computations. 

\eqref{proof_D_2} and \eqref{proof_D_3} are verified by multiplying \eqref{proof_D_1} by $\overline u$, integrating over $\R^n$ and taking the real and imaginary parts. \eqref{proof_D_4} and \eqref{proof_D_5} follow from multiplying \eqref{proof_D_1} by $r\overline u$, integrating over $\R^n$ and taking the real and imaginary parts. %In order to justify this computation, taking $\chi\in C_0^\infty(\R^n)$ with $\chi\equiv1$ for $|x|\le1$, we multiply \eqref{proof_D_1} by $\chi(\delta x)r\overline u$ and integrate over $\R^n$ to derive the corresponding equation for the approximation. Then, observing that $r|\nabla u|^2-\lambda r|u|^2 $ has a fixed sign and other terms satisfy a priori bounds:\begin{align}\label{proof_lemma_appendix_D_1_1}\int|rV||u|^2dx&\le  \norm{rVu}_{L^2}\norm{u}_{L^2}\le\begin{cases}C\norm{u}_{\H^1}\norm{u}_{L^2},&\ \text{if}\ n\ge3,\\\norm{r^2V}_{L^\infty}^\frac12\norm{V^\frac12u}_{L^2}\norm{u}_{L^2},&\ \text{if}\ n=2,\end{cases}\\\nonumber\int |\overline u\partial_r u|dx&\le \norm{u}_{L^2}\norm{\nabla u}_{L^2},\\\nonumber\int |rf\overline u|dx&\le \norm{rf}_{L^2}\norm{u}_{L^2},\end{align}we obtain \eqref{proof_D_4} by letting $\delta\to0$ and using the monotone convergence and dominated convergence theorems.  \eqref{proof_D_5} can be derived by applying the same argument to the imaginary part. 
\eqref{proof_D_6} can be seen from multiplying \eqref{proof_D_1} by $iA\overline u$ with $iA=r\partial_r+n/2$, integrating over $\R^n$ and taking the real part. 

By \eqref{proof_D_5} and \eqref{proof_D_4}, we have $r^{1/2}\nabla u\in L^2$ and $r^{1/2}u\in L^2$. These two properties, together with Lemma \ref{lemma_Hardy} (2), imply $r^{1/2}u\in \H^1$. %, which gives\begin{align*}2\Re\<Hu,iAu\>&=\<[H,iA]u,u\>=\<(-2\Delta-(r\partial_rV_0)u,u\>=\int\Big(2|\nabla u|^2-(r\partial_rV_0)|u|^2\Big)dx,\\\Re\<u,iAu\>&=0,\\2\ep\Re(-i\<u,iAu\>)&=2\ep\Im \<u,iAu\>=2\ep\Im \int u(r\partial_r \overline u)=-2\ep\Im \int(r\partial_ru)\overline udx,\\2\Re\<f,iAu\>&=\Re\int f(2r\partial_r \overline u+n\overline u)dx, \end{align*}and hence \eqref{proof_D_6} follows. 
\end{proof}

%{lemma}
\begin{lemma}
\label{lemma_appendix_D_2}
Let $0<\ep<\lambda$ and $v_\lambda=e^{-i\lambda^{\frac12}r}u$. Then one has
\begin{equation}
\begin{aligned}
\label{proof_D_7}
&\int \Big(|\nabla v_\lambda|^2-\partial_r(rV_0)|v_\lambda|^2+\ep\lambda^{-\frac12}r|\nabla v_\lambda|^2+\ep\lambda^{-\frac12}rV_0|v_\lambda|^2\Big)dx\\
&=\Re\int\Big(-\ep\lambda^{-\frac12}\overline ue^{i\lambda r}\partial_rv_\lambda+(n-1)f\overline u+\ep\lambda^{-\frac12}rf\overline u+2rf\overline{e^{i\lambda^{\frac12}r}\partial_rv_\lambda}\Big)dx. 
\end{aligned}
\end{equation}
\end{lemma}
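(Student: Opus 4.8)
The plan is to derive \eqref{proof_D_7} by combining the four scalar identities \eqref{proof_D_2}--\eqref{proof_D_6} from Lemma \ref{lemma_appendix_D_1} after rewriting everything in terms of the modulated function $v_\lambda = e^{-i\lambda^{1/2}r}u$. First I would record the basic differential relations between $u$ and $v_\lambda$: since $u = e^{i\lambda^{1/2}r}v_\lambda$, one has $\nabla u = e^{i\lambda^{1/2}r}(\nabla v_\lambda + i\lambda^{1/2}\hat x\, v_\lambda)$ with $\hat x = x/r$, hence $|\nabla u|^2 = |\nabla v_\lambda|^2 + \lambda |v_\lambda|^2 + 2\lambda^{1/2}\Im(\overline{v_\lambda}\,\partial_r v_\lambda)$ and $|u|^2 = |v_\lambda|^2$, while $\partial_r u = e^{i\lambda^{1/2}r}(\partial_r v_\lambda + i\lambda^{1/2}v_\lambda)$. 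These substitutions turn the left-hand sides of \eqref{proof_D_2}, \eqref{proof_D_4} and \eqref{proof_D_6} into expressions in $v_\lambda$ in which the "dangerous" $\lambda|v_\lambda|^2$ terms appear with definite coefficients; the point of the specific linear combination below is precisely to make these cancel.

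Next I would form the combination of the identities that produces \eqref{proof_D_7}. Concretely: take \eqref{proof_D_6} (the virial/commutator identity with $iA = r\partial_r + n/2$), and add to it suitable multiples of \eqref{proof_D_2} and the "weighted" identities \eqref{proof_D_4}, \eqref{proof_D_5} with weights $1$, $(n-1)$ and $\ep\lambda^{-1/2}$ so that after substituting $u = e^{i\lambda^{1/2}r}v_\lambda$ the terms $\lambda|\nabla v_\lambda|^2\cdot(\cdots)$, $\lambda^{3/2}|v_\lambda|^2$, $\lambda^{1/2}\Im(\overline{v_\lambda}\partial_r v_\lambda)$ and the pure $\lambda|v_\lambda|^2$ terms all drop out, leaving exactly $\int(|\nabla v_\lambda|^2 - \partial_r(rV_0)|v_\lambda|^2 + \ep\lambda^{-1/2}r|\nabla v_\lambda|^2 + \ep\lambda^{-1/2}rV_0|v_\lambda|^2)$ on the left. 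On the right-hand side, the source terms $\Re\int f\overline u$, $\Re\int rf\overline u$, $\Im\int rf\overline u$ and $\Re\int f(2r\partial_r\overline u + n\overline u)$ recombine — again using $\overline u = e^{-i\lambda^{1/2}r}\overline{v_\lambda}$ and $\partial_r\overline u = e^{-i\lambda^{1/2}r}(\partial_r\overline{v_\lambda} - i\lambda^{1/2}\overline{v_\lambda})$ — into the four terms displayed on the right of \eqref{proof_D_7}, namely $-\ep\lambda^{-1/2}\overline u e^{i\lambda r}\partial_r v_\lambda$, $(n-1)f\overline u$, $\ep\lambda^{-1/2}rf\overline u$ and $2rf\,\overline{e^{i\lambda^{1/2}r}\partial_r v_\lambda}$. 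Throughout, the regularity gained in Lemma \ref{lemma_appendix_D_1} ($r^{1/2}\nabla u\in L^2$, $r^{1/2}u\in L^2$, $r^{1/2}u\in \H^1$) together with the hypotheses $rV_0\in L^{n,\infty}$, $r\partial_r V_0\in L^{n/2,\infty}$ guarantees that every integral appearing is absolutely convergent and that integrations by parts in $r$ produce no boundary contributions, so the formal manipulations are justified.

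The main obstacle I anticipate is purely bookkeeping: getting the coefficients in the linear combination exactly right so that \emph{all} $\lambda$-dependent bulk terms cancel — in particular the cross term $2\lambda^{1/2}\Im(\overline{v_\lambda}\partial_r v_\lambda)$ coming from $|\nabla u|^2$ must be cancelled against the $\Re(\overline u\partial_r u)$ term in \eqref{proof_D_4} and the $\Im(\overline u\partial_r u)$ term in \eqref{proof_D_5}, and this forces the precise appearance of the weight $\ep\lambda^{-1/2}$ and the combination $r|\nabla v_\lambda|^2 + rV_0|v_\lambda|^2$. One should also double-check the $V_0$-terms: $\int rV_0|u|^2$ from \eqref{proof_D_4} combines with the weighting to give $\ep\lambda^{-1/2}rV_0|v_\lambda|^2$ on the left, while $-\int r\partial_r V_0\,|u|^2$ from \eqref{proof_D_6} is rewritten via the identity $\partial_r(rV_0) = V_0 + r\partial_r V_0$ and the $V_0|v_\lambda|^2$ piece is absorbed using \eqref{proof_D_2}. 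Once the algebra is organized this way the identity \eqref{proof_D_7} falls out; the inequality extracting the resolvent bound from it is then carried out in the subsequent (omitted) part of the argument following \cite[Appendix B]{BoMi}.
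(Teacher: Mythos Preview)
Your approach is exactly the paper's: \eqref{proof_D_7} is obtained by forming the linear combination $\eqref{proof_D_6}-\eqref{proof_D_2}-2\lambda^{1/2}\times\eqref{proof_D_5}+\ep\lambda^{-1/2}\times\eqref{proof_D_4}$ and then substituting $u=e^{i\lambda^{1/2}r}v_\lambda$. Your tentative weights $1,\ (n-1),\ \ep\lambda^{-1/2}$ are off --- in particular \eqref{proof_D_5} must carry the coefficient $-2\lambda^{1/2}$ to kill the cross term $2\lambda^{1/2}\Im(\overline{v_\lambda}\partial_r v_\lambda)$ you flagged, and \eqref{proof_D_2} gets coefficient $-1$, which together with the $n\overline u$ from \eqref{proof_D_6} produces the $(n-1)f\overline u$ on the right.
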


%proof
\begin{proof}
%Observe from an identity $|z-iw|^2=|z|^2+|w|^2-2\Im(z\overline{w})$ for $z,w\in\C$ that\begin{align}\label{proof_D_8}|\nabla v_\lambda|^2=|\nabla u-i\lambda^{\frac12}|x|^{-1}x u|^2=|\nabla u|^2+\lambda |u|^2-2\lambda^{\frac12}\Im[(\partial_r u)\overline u]. \end{align}
%Taking \eqref{proof_D_8} into account, t
The formula \eqref{proof_D_7} is derived by computing
$\eqref{proof_D_6}-\eqref{proof_D_2}-2\lambda^{\frac12}\times\eqref{proof_D_5}+\ep\lambda^{-\frac12}\times\eqref{proof_D_4}.$ (see  \cite[Section 2]{BVZ} and \cite[Appendix B]{BoMi} for more details). % as follows. First, taking \eqref{proof_D_8} into account, $\eqref{proof_D_6}-\eqref{proof_D_2}$ reads\begin{equation}\begin{aligned}\label{proof_D_8_1}\int\Big(|\nabla v_\lambda|^2+2\lambda^{\frac12}\Im[(\partial_r u)\overline u]-(\partial_r (rV))|u|^2-2\ep\Im [(r\partial_ru)\overline{u}]\Big)dx\\=\Re\int\Big(f(2r\partial_r\overline u+(n-1)\overline u)\Big)dx. \end{aligned}\end{equation}To eliminate $2\lambda^{\frac12}\Im[(\partial_r u)\overline u]$, we subtract $2\lambda^{\frac12}\times\eqref{proof_D_5}$ from \eqref{proof_D_8_1} to obtain\begin{equation}\begin{aligned}\label{proof_D_8}\int\Big(|\nabla v_\lambda|^2-(\partial_r (rV))|u|^2-2\ep\Im [(r\partial_ru)\overline{u}]+2\ep\lambda^{\frac12} r|u|^2\Big)dx\\=\int\Big(\Re[f(2r\partial_r\overline u+(n-1)\overline u)]-2\lambda^{\frac12}\Im  (rf\overline u)\Big)dx.\end{aligned}\end{equation}Since $-\Im  (rf\overline u)=\Re(rf\overline{-iu})$ and $\partial_r u-i\lambda^{\frac12} u=e^{i\lambda^{\frac12} r}\partial_r v_\lambda$, the right hand side of \eqref{proof_D_8} reads\begin{align}\label{proof_D_8_1}\Re[f(2r\partial_r\overline u+(n-1)\overline u)]-2\lambda^{\frac12}\Im  (rf\overline u)=\Re\Big(2rf\overline{e^{i\lambda^{\frac12} r}\partial_r v_\lambda}+(n-1)f\overline u\Big). \end{align}Using \eqref{proof_D_8} we next compute\begin{equation}\begin{aligned}\label{proof_D_8_2}-2\ep\Im [(r\partial_ru)\overline{u}]+2\ep\lambda^{\frac12} r|u|^2+\ep\lambda^{-\frac12}(r|\nabla u|^2-\lambda r|u|^2)=\ep\lambda^{-\frac12}r|\nabla v_\lambda|^2.\end{aligned}\end{equation}It is then seen from \eqref{proof_D_8_1} and \eqref{proof_D_8_2} that $\eqref{proof_D_8} +\ep\lambda^{-\frac12}\times\eqref{proof_D_4}$ reads\begin{equation}\begin{aligned}\label{proof_D_9}\int\Big(|\nabla v_\lambda|^2-(\partial_r (rV))|u|^2+\ep\lambda^{-\frac12}r|\nabla v_\lambda|^2+\ep\lambda^{-\frac12}rV|u|^2+\ep\lambda^{-\frac12}\Re(\overline u\partial_ru)\Big)dx\\=\Re\int\Big(2rf\overline{e^{i\lambda^{\frac12} r}\partial_r v_\lambda}+(n-1)f\overline u+\ep\lambda^{-\frac12}rf\overline u\Big). \end{aligned}\end{equation}Finally, since $\Re(\overline u\partial_ru)=\Re[\overline u(\partial_r u-i\lambda^{\frac12} u)]=\Re(\overline ue^{i\lambda r}v_\lambda)$, \eqref{proof_D_9} is equivalent to \eqref{proof_D_7}. 
\end{proof}

%remark
\begin{remark}
Lemmas \ref{lemma_appendix_D_1} and \ref{lemma_appendix_D_2} also hold for the subcritical case (with $V_0$ replaced by $V_\delta$) without assuming that $Pf\equiv0$. Indeed, since $D(q_\delta)=\H^1$ in the subcritical case, we have $(H_\delta-\lambda-i\ep)^{-1}f\in \H^1$ and the proof of Lemma \ref{lemma_appendix_D_1} thus works for any $f\in C_0^\infty(\R^n)$. 
\end{remark}

%proof
\begin{proof}[Proof of Proposition \ref{proposition_Sobolev_1} (1)]
Let $f\in C_0^\infty(\R^n)$ be such that $Pf\equiv0$. Note that $Pu=Pv_\lambda=0$ since $V_0$ is radially symmetric. It suffices to show
\begin{align}
\label{proof_D_8}
\norm{r^{-1}u}_{L^2}\le C\norm{rf}_{L^2}
\end{align}
uniformly in $\lambda\in\R$ and $\ep>0$ or in $\lambda<0$ and $\ep=0$. When $\ep\ge\lambda$, \eqref{proof_D_2} and \eqref{proof_D_3} imply
\begin{equation}
\begin{aligned}
\label{proof_D_9}
\int\Big(|\nabla u|^2+V_0|u|^2\Big)dx
&\le (1+\lambda_+/\ep)\int |fu|dx\\
&\le \delta_1\norm{r^{-1}u}_{L^2}^2+\delta_1^{-1}\norm{rf}_{L^2}^2
\end{aligned}
\end{equation}
for any $\delta_1>0$, where $\lambda_+=\max\{0,\lambda\}$. Note that if $\lambda< 0$ and $\ep=0$, \eqref{proof_D_9} still holds with $1+\lambda_+/\ep$ replaced by $1$. Having the fact $Pu=0$ in mind, Lemma \ref{lemma_Hardy} (1) shows
\begin{align}
\label{proof_D_10}
\int\Big(|\nabla u|^2+V_0|u|^2\Big)dx%\ge \int\Big(|\nabla u|^2-C_{\Hardy}|x|^{-2}|u|^2\Big)dx
\ge (n-1)\norm{r^{-1}u}_{L^2}^2.
\end{align}
Taking $\delta_1=(n-1)/2$ we obtain \eqref{proof_D_8}.  

We next let $\ep<\lambda$. By Lemma \ref{lemma_Hardy} (1) and (2), the left hand side of \eqref{proof_D_7} satisfies 
\begin{equation}
\begin{aligned}
\label{proof_D_11}
&\int \Big(|\nabla v_\lambda|^2-\partial_r(rV_0)|v_\lambda|^2+\ep\lambda^{-\frac12}r|\nabla v_\lambda|^2+\ep\lambda^{-\frac12}rV_0|v_\lambda|^2\Big)dx\\
&\ge 
(n-1)\norm{r^{-1}u}_{L^2}^2+\frac{2n-3}{4}\ep\lambda^{-\frac12}\norm{r^{-1/2}u}_{L^2}^2. 
\end{aligned}
\end{equation}
Hence it suffices to show that there exist $\delta_0<(n-1)$ and $C>0$, independent of $\ep$ and $\lambda$, such that the right hand side of \eqref{proof_D_7} is bounded from above by $\delta_0\norm{r^{-1}u}_{L^2}^2+C\norm{rf}_{L^2}^2$. For the first term of the right hand side of \eqref{proof_D_7}, the Cauchy-Schwarz inequality and the classical Hardy inequality yield that there exists $C_1>0$ independent of $\ep$ and $\lambda$ such that
\begin{align*}
\ep\lambda^{-\frac12}\Big|\Re \int e^{i\lambda^{\frac12} r}(\partial_rv_\lambda) \overline udx\Big|
&\le \sqrt \ep \norm{\nabla v_\lambda}_{L^2}\norm{u}_{L^2}\\
&\le \delta_1\norm{r^{-1}u}_{L^2}^2+C_1\delta_1^{-1}\ep\norm{u}_{L^2}^2. 
\end{align*}
for any $\delta_1>0$. Here \eqref{proof_D_3} and Hardy's inequality imply
\begin{align*}
\ep\norm{u}_{L^2}^2
\le \int |fu|dx
\le \delta_1^2\norm{r^{-1}u}_{L^2}^2 + C_1'\delta_1^{-2}\norm{rf}_{L^2}^2.
\end{align*}
with some universal constant $C_1'>0$. Hence we obtain
\begin{align}
\label{proof_D_13}
\ep\lambda^{-\frac12}\Big|\Re \int e^{i\lambda^{\frac12} r}(\partial_rv_\lambda) \overline udx\Big|
\le (1+C_1)\delta_1\norm{r^{-1}u}_{L^2}^2+C_1C_1'\delta_1^{-3}\norm{rf}_{L^2}^2.
\end{align}
For other terms, similar computations yield
\begin{align}
\label{proof_D_14}
\Big|(n-1)\Re\int f\overline udx\Big|
&\le \delta_1\norm{r^{-1}u}_{L^2}^2+C_2\delta_1^{-1}\norm{rf}_{L^2}^2,\\
\Big|2\Re \int rf\overline{e^{i\lambda^{\frac12}r}\partial_rv_\lambda}dx\Big|
&\le \delta_1\norm{r^{-1}u}_{L^2}^2+C_3\delta_1^{-1}\norm{rf}_{L^2}^2,\\
%\label{proof_D_14_1}\Big|2\Re \int rf\overline{e^{i\lambda^{\frac12}r}\partial_rv_\lambda}dx\Big|&\le \delta_1\norm{\nabla v_\lambda}_{L^2}^2+C_4\delta_1^{-1}\norm{rf}_{L^2}^2,\\
\label{proof_D_15}
\ep\lambda^{-\frac12}\Big|\int rf\overline udx\Big|
\le \sqrt\ep\norm{rf}_{L^2}\norm{u}_{L^2}^2
&\le \delta_1\norm{r^{-1}u}_{L^2}^2+C_4\delta_1^{-3}\norm{rf}_{L^2}^2. 
\end{align}
\eqref{proof_D_13}--\eqref{proof_D_15} show that the right hand side of \eqref{proof_D_7} is bounded from above by $\delta_0\norm{r^{-1}u}_{L^2}^2+C\norm{rf}_{L^2}^2$ with $\delta_0=(4+C_1)\delta_1$ and $C=(C_1C_1'+C_2+C_3+C_4)\delta_1^{-3}$. Choosing $\delta_1$ so that $(4+C_1)\delta_1<n-1$ we obtain \eqref{proof_D_8}. 
%$$\norm{\nabla v_\lambda}_{L^2}^2\le C_6\delta^{-1}\delta_1^{-3}\norm{rf}_{L^2}^2 $$uniformly in $\lambda$ and $\ep$, and complete the proof. 
\end{proof}

%We next give the proof of \eqref{proposition_Sobolev_1_2}: 
%proof
\begin{proof}[Proof of Proposition \ref{proposition_Sobolev_1} (2)]
Let $f\in C_0^\infty(\R^n)$ (note that $f$ does not have to satisfy $Pf\equiv0$). The most part of the proof is same as above. When $\ep\ge\lambda$, \eqref{assumption_A_1} implies 
$$
\int \Big(|\nabla u|^2+V_\delta|u|^2\Big)dx\ge \delta\norm{\nabla u}_{L^2}^2.
$$
Therefore, taking $\delta_1=\delta/2$ we obtain 
$
\norm{r^{-1}u}_{L^2}^2\le C\delta^{-2}\norm{rf}_{L^2}^2
$ for $u=(H_\delta-z)^{-1}f$. 

When $\ep<\lambda$, the only difference from the critical case is that, instead of Lemma \ref{lemma_Hardy}, we use \eqref{assumption_A_1} and \eqref{assumption_A_2} to deal with two terms $\partial_r(rV_\delta)|v_\lambda|^2$ and $\ep\lambda^{-1/2}rV_\delta|v_\lambda|^2$ as follows. For the first term, we simply use \eqref{assumption_A_2} to obtain
\begin{align}
\label{proof_D_17}
\int (|\nabla v_\lambda|^2-\partial_r(rV_\delta)|v_\lambda|^2)dx\ge \delta\norm{\nabla v_\lambda}_{L^2}^2.
\end{align}
For the second term, taking the fact $r^{1/2}v_\lambda\in\H^1$ into account, we learn by \eqref{assumption_A_1} that 
$$
-\ep\lambda^{-\frac12}\int rV_\delta|v_\lambda|^2dx\le \ep\lambda^{-\frac12}(1-\delta)\Big(\norm{r^\frac12\nabla v_\lambda}_{L^2}^2+\int|v_\lambda\nabla v_\lambda|dx+\frac14\norm{r^{-\frac12}v_\lambda}^2_{L^2}\Big).
$$
Let us fix $\delta_1>0$ arbitrarily. By the fact $\ep<\lambda$, the Cauchy-Schwartz and Hardy's inequalities, second and third terms of the right hand side satisfy
\begin{align*}
&(1-\delta)\ep\lambda^{-1/2}\Big(\int|v_\lambda\nabla v_\lambda|dx+\norm{r^{-\frac12}v_\lambda}_{L^2}^2\Big)\\
&\le (1-\delta)\sqrt\ep\norm{v_\lambda}_{L^2}(\norm{\nabla v_\lambda}_{L^2}+\norm{r^{-1}v_\lambda}_{L^2})\\
&\le \delta_1\norm{\nabla v_\lambda}_{L^2}^2+C_0\delta_1^{-1}\ep\norm{v_\lambda}^2,
%\frac{1-\delta_0}{4}\sqrt{\ep}\norm{r^{-1}v_\lambda}_{L^2}\norm{v_\lambda}_{L^2}\le \frac{1-\delta_0}{4C_{\Hardy}}\norm{\nabla v_\lambda}_{L^2}\sqrt{\ep}\norm{v_\lambda}_{L^2}\le \delta_1\norm{\nabla v_\lambda}_{L^2}^2+C_0\delta_1^{-1}\ep\norm{v_\lambda}_{L^2}^2.
\end{align*}
with some $C_0>0$ independent of $\ep$, $v_\lambda$, $\delta$ and $\delta_1$. 
Here \eqref{proof_D_3} and Hardy's inequality imply
\begin{align*}
\ep\norm{v_\lambda}_{L^2}^2
&\le \delta_1^2\norm{\nabla v_\lambda}_{L^2}^2+C_{\Hardy}^{-1}\delta_1^{-2}\norm{rf}_{L^2}^2,
\end{align*}
and hence, setting $C_1=C_{\Hardy}^{-1}C_0$, we obtain
\begin{equation}
\begin{aligned}
\label{proof_D_18}
&-\ep\lambda^{-\frac12}\int rV_\delta|v_\lambda|^2dx\\
&\le \ep\lambda^{-\frac12}(1-\delta)\norm{r^\frac12\nabla v_\lambda}_{L^2}^2
+(1+C_1)\delta_1\norm{\nabla v_\lambda}_{L^2}^2
+C_1\delta_1^{-3}\norm{rf}_{L^2}^2. 
\end{aligned}
\end{equation}
By virtue of \eqref{proof_D_17} and \eqref{proof_D_18}, the same argument as in the critical case  yields that if we take $\delta_1=a\delta$ with $a>0$ small enough, then $\norm{ r^{-1}u}_{L^2}^2\le C\delta^{-4}\norm{rf}_{L^2}^2$ which completes the proof. 
\end{proof}

$ $\\
{\bf Acknowledgments.} The author would like to thank Jean-Marc Bouclet for valuable discussions and for hospitality at the Institut de Math\'ermatiques de Toulouse, Universit\'e Paul Sabatier, where this work has been done. He is partially supported by JSPS Grant-in-Aid for Young Scientists (B) (No. 25800083) and by Osaka University Research Abroad Program (No. 150S007).

%%%%%%%%%% Bibliography %%%%%%%%%%%%%%%%%%%

\end{document}